\newtheorem{Theorem}{Theorem}[section]
\newtheorem{Lemma}[Theorem]{Lemma}
\newtheorem{Corollary}[Theorem]{Corollary}
\newtheorem{Proposition}[Theorem]{Proposition}
\theoremstyle{definition}
\newtheorem{Definition}[Theorem]{Definition}
\newtheorem{Remark}[Theorem]{Remark}
\newtheorem{Example}[Theorem]{Example}
\numberwithin{equation}{section}
\def\C{\mathbb C}
\def\N{\mathbb N}
\def\R{\mathbb R}
\def\Z{\mathbb Z}
\newcommand{\CA}{\mathcal{A}}
\newcommand{\CB}{\mathcal{B}}
\newcommand{\CE}{\mathcal{E}}
\newcommand{\CF}{\mathcal{F}}
\newcommand{\CH}{\mathcal{H}}
\newcommand{\CI}{\mathcal{I}}
\newcommand{\CK}{\mathcal{K}}
\newcommand{\CU}{\mathcal{U}}
\newcommand{\FA}{\mathfrak{A}}
\newcommand{\FH}{\mathfrak{H}}
\newcommand{\FN}{\mathfrak{N}}
\def\be{\begin{equation}}
\def\ee{\end{equation}}
\def\bt{\begin{Theorem}}
\def\et{\end{Theorem}}
\def\bi{\begin{itemize}}
\def\ei{\end{itemize}}
\def\bea{\begin{eqnarray}}
\def\eea{\end{eqnarray}}
\def\beast{\begin{eqnarray*}}
\def\eeast{\end{eqnarray*}}
\def\ben{\begin{enumerate}}
\def\een{\end{enumerate}}
\def\bi{\bibitem}
\def\rar{\rightarrow}
\newcommand{\fns}{f.n.s.\ }
\newcommand{\half}{{\frac{1}{2}}}
\DeclareMathOperator{\re}{Re}
\DeclareMathOperator{\ind}{Ind}
\DeclareMathOperator{\ccr}{CCR}
\DeclareMathOperator{\Tr}{Tr}
\DeclareMathOperator{\Ad}{Ad}
\DeclareMathOperator{\cspan}{\overline{span}}
\newcommand{\walpha}{\widetilde{\alpha}}
\newcommand{\wgamma}{\widetilde{\gamma}}
\renewcommand{\MR}[1]{} % remove MR nonsense from the references
\begin{document}
\title[On the classification and modular extendability of E$_0$-semigroups]{On the classification and modular extendability of E$_0$-semigroups on factors}

\author{Panchugopal Bikram}
\author{Daniel Markiewicz}
\date{October 24, 2014}
\address{Department of Mathematics,
Ben-Gurion University of the Negev,
P.O.B. 653, Be'er Sheva 8410501,
Israel.}
\email{bikram@math.bgu.ac.il, danielm@math.bgu.ac.il}

\keywords{modularly extendable endomorphisms, $E_0$-semigroups, CCR flows, $q$-CCR flows, CAR flows,  coupling index, relative commutant index.}

\subjclass[2010]{Primary  46L55, 46L57; Secondary 46L10.}
\thanks{The first author was supported in part by a postdoctoral fellowship funded in part by the 
Skirball Foundation via the Center for Advanced Studies in Mathematics at Ben-Gurion University of 
the Negev. The second author was supported by the ISF within the ISF-UGC‬‬ 
‫‪joint research program framework‬‬ (grant No. 1775/14).}

\begin{abstract}
In this paper we study modular extendability and equimodularity of 
endomorphisms and E$_0$-semigroups on factors with respect to \fns weights. We show that modular extendability is a property that does not depend on the choice of weights, it is a cocycle conjugacy invariant and it is preserved under tensoring.
We say that a modularly extendable E$_0$-semigroup is of type EI, EII or EIII
if its modular extension is of type I, II or III, respectively.
We prove that all types exist on properly infinite factors. 

We also compute the coupling index and the relative commutant index for the CAR flows and $q$-CCR flows.
As an application, by considering repeated tensors of the CAR flows we show that there are infinitely many non cocycle 
conjugate non-extendable  $E_0$-semigroups on the hyperfinite factors of
types II$_1$, II$_{\infty}$ and III$_\lambda$, for $\lambda \in (0,1)$. 
\end{abstract} 
\maketitle 
%\tableofcontents

\section{Introduction}
A weak$^*$ continuous one-parameter semigroup of unital $*$-endomorphisms on a von Neumann algebra is called an $E_0$-semigroup,
and there has been considerable interest in their classification up to the
equivalence relation called cocycle conjugacy. Most of the progress
has focused on the case of E$_0$-semigroups on type I$_\infty$ factors:
those are divided into types I, II and III, and every such
E$_0$-semigroup gives rise to a product system of Hilbert spaces. 
In fact, the classification theory of E$_0$-semigroups of type
I$_\infty$  factors is equivalent to the classification problem
of product systems of Hilbert spaces up to isomorphism.
For an overview of the theory of E$_0$-semigroups on type I$_\infty$ factors, we recommend
the monograph by Arveson~\cite{Arv}.

E$_0$-semigroups on II$_1$ factors were first studied by Powers~\cite{powers}, who introduced an index for their study. 
Alevras~\cite{Alev} computed the Powers index of the Clifford flows on type II$_1$ factors, however the index is not known
to be a cocycle conjugacy invariant. On the other hand, Alevras~\cite{alevras-thesis,Alev} also showed  that a product system
of W*-correspondences can be associated to every E$_0$-semigroup
on a type II$_1$ factor, and the isomorphism class of the product system is a cocycle conjugacy invariant. In fact, the association of product
systems of W*-correspondences to E$_0$-semigroups on general von Neumann algebras has been established by Bhat and 
Skeide~\cite{bhat-skeide-2000} and subsequent work of Skeide (see \cite{skeide-holyoke}). However product systems are difficult to compute in practice. Amosov, Bulinskii and Shirokov~\cite{ABS} were the first
to examine the issue of extendability of E$_0$-semigroups on general 
factors. Bikram, Izumi, Srinivasan and Sunder~\cite{BISS} introduced
the concept of equimodularity for endomorphisms, and applied it to obtain convenient criteria for the existence of extensions. As an application, it was proved in \cite{BISS} that the CAR flows are not extendable on the hyperfinite factor of type II$_1$. Similarly, Bikram~\cite{Bikram} showed that the CAR flows are not extendable on hyperfinite III$_\lambda$ factors, for $\lambda \in (0,1)$, for a certain class of quasi-free states. In \cite{MS}, Srinivasan and Margetts introduced new invariants for E$_0$-semigroups on
type II$_1$ factors, especially the coupling index, and as an application showed that the Clifford flows are non-cocycle conjugate. Subsequently Margetts and Srinivasan~\cite{MS1} considered more general factors, and they showed that by varying the
quasi-free states appropriately, the CCR flows in hyperfinite type III$_\lambda$ factors are non-cocycle conjugate, for for $\lambda \in (0,1]$. They also proved that there are uncountably many non-cocycle conjugate E$_0$-semigroups on all hyperfinite II$_\infty$ and III$_\lambda$ factors, for $\lambda \in (0,1]$. 

In this paper we focus on the role of extendability in the classification of E$_0$-semigroups on factors, especially of types II and III. In Section~\ref{sec:extendibility}, we generalize the concept in \cite{BISS} of
extendability of a unital endomorphism  to the context of faithful normal semifinite \emph{weights}. Furthermore, we
show that this property, now renamed \emph{modular extendability} of
a unital endomorphism (or E$_0$-semigroup), does not depend on the choice of \fns weights. In addition, we show that the modular extension is a
cocycle conjugacy invariant of a modularly extendable E$_0$-semigroup. This allows us to
introduce in Section~\ref{sec:E-semigroups} a classification scheme based on the well-known situation for the type I$_\infty$ factor. Namely, we say that a modularly extendable E$_0$-semigroup has type EI, EII or EIII if its modular extension has 
type I, II or III, respectively. We prove that this is consistent with the classification of type I$_\infty$ factors. Furthermore, we show that 
the tensor product of two $E_0$-semigroups $\alpha$ and $\beta$ on factors are modularly extendable if and only if $\alpha \otimes \beta$
is modularly extendable. As an application, we show that if $M$ is a properly infinite factor, then there exist modularly extendable E$_0$-semigroups of every type on $M$.

In Section~\ref{sec:equimodularity}, we generalize the concept of equimodularity for \fns weights. We prove that the necessary condition for equimodularity described in \cite{BISS} for the case of states  in fact is necessary and sufficient (even in the context of
\fns weights). We also prove that the sufficient condition for modular extendability in the presence of equimodularity found in \cite{BISS} also holds for \fns weights. Despite the usefulness of these results, we show that for every properly infinite factor there exist modularly extendable unital endomorphisms and E$_0$-semigroups which are not
equimodular with respect to any \fns weight.

In Section~\ref{sec:E-semigroups}, we discuss the classification of E$_0$-semigroups on factors into types EI, EII and EIII and not extendable. We also consider two invariants for the classification of E$_0$-semigroups: the relative commutant index and coupling index. The relative commutant index was considered for type II$_1$ factors by Powers~\cite{powers} and Alevras~\cite{Alev}. The coupling index was introduced by Margetts and Srinivasan~\cite{MS, MS1}. 

Finally, in Section~\ref{sec:examples}, we apply our results to some concrete examples. 
Firstly, we show that CAR flows are equimodular but not modularly extendable for a class of quasi-free states even larger than that in \cite{BISS, Bikram};
we also compute their coupling and relative commutant indices. As an application, in analogy with Alevras's result for Clifford flows in \cite{Alev}, 
we show that under mild conditions, if $\alpha$ is a CAR flow then $\alpha^{\otimes^k}$ and $\alpha^{\otimes^\ell}$ are cocycle conjugate if and only if $k=\ell$. 
As another example, we consider the class of $q$-CCR flows on the $q$-Gaussian factors and prove  that they are equimodular but not modularly extendable, and compute their indices.

\emph{In this paper, inner products are linear in the first coordinate unless stated otherwise. We assume all Hilbert spaces to be separable and all von Neumann algebras have separable predual. We also assume
that our endomorphisms are *-preserving.}

\section{Extendability of endomorphisms}\label{sec:extendibility}

In this section we study extendibility of endomorphisms on factors.
This program started with the work of Amosov, Bulinskii and Shirovok~\cite{ABS}, and then Bikram, Izumi, Srinivasan and Sunder~\cite{BISS}. Since we extend some
of their results in a slightly different context, we review the situation
and introduce appropriate terminology.

 Let $M$ be a von Neumann algebra and let $\phi$ be a faithful normal semi-finite weight on M (in the continuation we will often use the abbreviation \emph{\fns weight}). 
 Such a pair $(M, \phi)$ will be called a \emph{non-commutative measure
 space.}
 Recall that $\phi$ has an associated GNS representation. Let
 $\CH_{\phi}$ be the quotient and completion of 
 $\FN_{\phi} = \{ x \in M ;~ \phi(x^*x) < +\infty \}$,
 and let  $\FN_{\phi} \ni x \mapsto x_{\phi} \in \CH_{\phi}$ denote 
 the canonical map.  The GNS $*$-representation $\pi_{\phi}: M \rar \CB(\CH_{\phi})$ is uniquely determined by the identity
 \[
  \langle \pi_{\phi}(a) x_{\phi}, y_{\phi} \rangle = \phi(y^*ax),
  \qquad a \in M,~ x, y \in \FN_{\phi}.
 \]
We denote by 
$J_{\phi}, \Delta_{\phi} \text{ and } \{\sigma_t^{\phi}\}$ the 
modular conjugation operator, modular operator and 
modular automorphism group, respectively,  for $M$ associated to $\phi$. 
When the weight is determined by the context, we will often
suppress the subindex, and write $J$ and $\Delta$ instead of 
$J_\phi$ and $\Delta_\phi$. We will also often 
identify $M$ with $\pi_\phi(M)$, and identify $\pi_{\phi}(a)$ with $a$, 
for $a\in M$.

\begin{Definition}
Let $(M, \phi)$ be  factorial noncommutative measure space. 
Let $\CH_\phi$ be the GNS space corresponding to $\phi$, and
let us identify $M$ with its image under the GNS representation
in $\CB(\CH_\phi)$. Suppose that $\theta: M \to M$ is a normal
unital endormorphism, and let $\theta_\phi': M' \to M'$ be
the endomorphism given by
\[
\theta_\phi'(y) = J_\phi \theta(J_\phi y J_\phi) J_\phi,
\qquad y \in M'
\]
We will say that $\theta$ is \emph{$\phi$-modularly extendable} if and only if there exists  a normal endomorphism  $\widetilde{\theta}_{\phi}$ of $\CB(\CH_{\phi})$ satisfying
\begin{equation}\label{ext}
\widetilde{\theta}_{\phi}(xy^{\prime}) = \theta(x) \theta_\phi'(y),
 \qquad \forall x \in M, \forall y \in M^{\prime}.
\end{equation}
where $J_\phi$ is the modular conjugation operator. We note that by
normality, such an extension is unique if it exists, and it will be
called the \emph{$\phi$-modular extension} of $\theta$.
 \end{Definition}
 
We will now show that this notion of extendability of the endomorphism $\theta$ on $M$ does not depend on the choice of weight.

\begin{Theorem}\label{inv}
Let $M$ be a factor and let $\phi$ and $\psi$ be two \fns weights on
$M$. 
\begin{enumerate}
\item\label{item:J-covariant-u} There exists a unitary $u: \CH_\phi \to \CH_\psi$  
such that
\begin{equation}\label{eq:J-covariant-u}
J_{\psi} = uJ_{\phi}u^*, \qquad \text{and} \qquad
\pi_{\psi}(x) = u\pi_{\phi}(x)u^*, \quad \forall x\in M
\end{equation}

\item\label{item:extend} Suppose that $\theta$ is a normal unital endormorphism of $M$.
Then $\theta$ is $\phi$-extendable if and only if
$\theta$ is $\psi$-extendable. In addition, if these conditions hold,
for every $u: \CH_\phi \to \CH_\psi$ satisfying
\eqref{eq:J-covariant-u}  we have
$\mathrm{Ad}(u) \circ \widetilde{\theta}_\phi =\widetilde{\theta}_\psi \circ \mathrm{Ad}(u)$.  In particular,
the modular extendability of an endomorphism does not depend on the choice of
\fns weight.
\end{enumerate}
\end{Theorem}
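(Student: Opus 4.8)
The plan is to reduce everything to the well-known uniqueness of the GNS representation (in its standard form) for a \fns weight on a factor. For part~\eqref{item:J-covariant-u}, I would invoke the fact that $(\pi_\phi, \CH_\phi, J_\phi, \CP_\phi)$ and $(\pi_\psi, \CH_\psi, J_\psi, \CP_\psi)$ are both standard forms of the factor $M$; by Haagerup's uniqueness theorem for the standard form, there is a unitary $u\colon \CH_\phi \to \CH_\psi$ implementing a spatial isomorphism, i.e.\ $\pi_\psi(x) = u\pi_\phi(x)u^*$ for all $x\in M$ and $u J_\phi u^* = J_\psi$ (and $u$ maps the positive cone onto the positive cone, though we will not need that). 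This $u$ then automatically also conjugates the commutants, $\pi_\psi(M)' = u\,\pi_\phi(M)'\,u^*$, since $u$ is unitary. This is the only "hard" input, and it is classical.

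For part~\eqref{item:extend}, fix $u$ as in \eqref{eq:J-covariant-u} and suppose $\theta$ is $\phi$-modularly extendable with extension $\widetilde\theta_\phi$ on $\CB(\CH_\phi)$. I would first check that $\mathrm{Ad}(u)$ intertwines the commutant endomorphisms, i.e.\ $\theta'_\psi = \mathrm{Ad}(u)\circ \theta'_\phi \circ \mathrm{Ad}(u^*)$ on $M'$ (identifying $M$ with its images on both spaces): this is a direct computation from the definition $\theta'_\phi(y) = J_\phi\,\theta(J_\phi y J_\phi)\,J_\phi$, using $uJ_\phi u^* = J_\psi$ and $u\pi_\phi(x)u^* = \pi_\psi(x)$. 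Then define $\widetilde\theta_\psi := \mathrm{Ad}(u)\circ\widetilde\theta_\phi\circ\mathrm{Ad}(u^*)$, which is a normal endomorphism of $\CB(\CH_\psi)$. For $x\in M$ and $y\in M'$ one computes, using \eqref{ext} for $\widetilde\theta_\phi$,
\[
\widetilde\theta_\psi\big(\pi_\psi(x)\,y'\big)
= u\,\widetilde\theta_\phi\big(\pi_\phi(x)\,u^*y'u\big)\,u^*
= u\,\theta(x)\,\theta'_\phi(u^*y'u)\,u^*
= \theta(x)\,\theta'_\psi(y'),
\]
so $\widetilde\theta_\psi$ is a $\psi$-modular extension of $\theta$; hence $\theta$ is $\psi$-modularly extendable, and by uniqueness of the extension (noted in the Definition) $\widetilde\theta_\psi = \mathrm{Ad}(u)\circ\widetilde\theta_\phi\circ\mathrm{Ad}(u^*)$, which is exactly the stated intertwining relation. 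The converse direction is symmetric, using $u^*$ in place of $u$. Since the choice of $u$ was arbitrary among those satisfying \eqref{eq:J-covariant-u}, the conclusion holds for all such $u$, and in particular modular extendability is weight-independent.

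The main obstacle, such as it is, is purely bookkeeping: one must be careful that $\widetilde\theta_\phi$ restricted to $M$ equals $\theta$ (take $y'=1$) and restricted to $M'$ equals $\theta'_\phi$ (take $x=1$), and that these restrictions are exactly what the formula for $\widetilde\theta_\psi$ transports correctly — the identification of $M$ with $\pi_\phi(M)$ versus $\pi_\psi(M)$ must be tracked consistently through $\mathrm{Ad}(u)$. I expect no genuine difficulty beyond invoking the uniqueness of the standard form.
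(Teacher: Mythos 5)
Your proposal is correct and follows essentially the same route as the paper: both obtain $u$ from the uniqueness of the standard form (the paper cites Takesaki's Theorem 1.14 in vol.~II) and then verify by direct computation that $\mathrm{Ad}(u)\circ\widetilde{\theta}_\phi\circ\mathrm{Ad}(u^*)$ satisfies the defining identity \eqref{ext} for the $\psi$-modular extension. No substantive difference.
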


\begin{proof}
\eqref{item:J-covariant-u} The general theory of Hilbert/Tomita algebras,
and their (hyper)-standard forms associated to \fns weights
(see \cite[Theorem 1.14]{Taii}), yields  a unitary 
$u:\CH_\phi \to \CH_\psi$ satisfying 
\eqref{eq:J-covariant-u}.

\eqref{item:extend} Suppose that $\theta$ is $\phi$-extendable. Let $u$ be a unitary
given by item \eqref{item:J-covariant-u}. Let us consider
the endomorphism $\alpha$ of $B(\CH_\psi)$ defined by
$$
\alpha(T) = u\widetilde{\theta}_{\phi}(u^*Tu)u^*.
$$
Notice that for all $x, y\in M$, 
\begin{align*}
\alpha(\pi_{\psi}(x)J_{\psi}\pi_{\psi}(y)J_{\psi}) 
&= u\widetilde{\theta}_{\phi}(u^* \pi_{\psi}(x)J_{\psi}\pi_{\psi}(y)J_{\psi}u)u^*\\
&= u\widetilde{\theta}_{\phi}(u^* \pi_{\psi}(x)uu^*J_{\psi}uu^*\pi_{\psi}(y)uu^*J_{\psi}u^*)u^*\\
&=  u\widetilde{\theta}_{\phi}(\pi_{\phi}(x)J_{\phi}\pi_{\phi}(y)J_{\phi})u^*\\
&=  u\pi_{\phi}(\theta(x))J_{\phi}\pi_{\phi}(\theta(y))J_{\phi}u^*\\
&=  \pi_{\psi}(\theta(x))J_{\psi}\pi_{\psi}(\theta(y))J_{\psi}.
\end{align*}
Therefore $\alpha$ is a normal unital endomorphism conjugate to $\widetilde{\theta}_\phi$ extending $\theta$, proving that
$\theta$ is $\psi$-extendable and so
$\mathrm{Ad}(u) \circ \widetilde{\theta}_\phi =\widetilde{\theta}_\psi \circ \mathrm{Ad}(u)$, proving the statement.
\end{proof}

\begin{Remark}
As a consequence of Theorem \ref{inv}, since the $\phi$-modular 
extendability of an endomorphism on a factor $M$ doesn't depend on the choice of  particular f.n.s weight $\phi$ of $M$, we will speak simply of
modular extendability without reference to a weight. In \cite{ABS}, this
is called regular extendability, and in \cite{MS1}
this is called canonical extendability.
\end{Remark}

\begin{Remark}\label{rem:automorphism-extendable}
Note that if $\gamma$ is an automorphism of a factor $M$, then it
is modularly extendable. In fact, recall that from the theory of von Neumann algebras in standard form (see \cite[Theorem 1.14]{Taii}), 
 if $\phi$ is a \fns weight, then there exists
a unitary $u: \CH_\phi \to \CH_\phi $ satisfying
\begin{equation}\label{eq:conjugacy-u}
J_\phi u = u J_\phi \qquad \text{and} \qquad
\pi_\phi(\gamma (x)) = u \pi_\phi(x) 
u^*, 
\qquad \forall x \in M.
 \end{equation}
We will say that such a unitary modularly implements the automorphism
$\gamma$ with respect to $\phi$.
\end{Remark}

Furthermore, it follows immediately from the previous remark that modular extendibility is a conjugacy invariant:

\begin{Theorem}\label{thm:conjugacy}
 Suppose $\theta$ is an unital normal endomorphism of a factorial noncommutative measure space $(M, \phi)$. 
 Suppose that  $\gamma$ is an automorphism of $M$, and let  
  $\theta^{\gamma} = \gamma\circ \theta \circ \gamma^{-1}$. 
If $\theta$ is modularly extendable, then $\theta^\gamma$ is 
modularly extendable, and moreover 
  $\widetilde{\theta^\gamma}_\phi =\mathrm{Ad}(u) \widetilde{\theta}_\phi \mathrm{Ad}(u^*)$ for any unitary
  $u$ which modularly implements $\gamma$ with respect to $\phi$. 
\end{Theorem}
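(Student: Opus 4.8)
The plan is to reduce Theorem~\ref{thm:conjugacy} to the material already assembled, chiefly Remark~\ref{rem:automorphism-extendable} and the uniqueness of normal extensions noted in the Definition. First I would fix a \fns weight $\phi$ on $M$ (which we may do by Theorem~\ref{inv}, since modular extendability is weight-independent) and invoke Remark~\ref{rem:automorphism-extendable} to obtain a unitary $u:\CH_\phi \to \CH_\phi$ modularly implementing $\gamma$, that is, $J_\phi u = u J_\phi$ and $\pi_\phi(\gamma(x)) = u\pi_\phi(x)u^*$ for all $x \in M$. Note that since $u$ commutes with $J_\phi$, conjugation by $u$ also sends $M'$ onto $M'$ and implements the induced automorphism $\gamma'$ of $M'$: explicitly, for $y' \in M'$ one has $u y' u^* = u J_\phi (J_\phi y' J_\phi) J_\phi u^* = J_\phi (u (J_\phi y' J_\phi) u^*) J_\phi = J_\phi \gamma(J_\phi y' J_\phi) J_\phi = \gamma'_\phi(y')$, so $\Ad(u)$ restricts to automorphisms of both $M$ and $M'$ compatibly with the $\gamma \mapsto \gamma'_\phi$ construction.

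Next I would define $\beta = \Ad(u)\circ \widetilde{\theta}_\phi \circ \Ad(u^*)$, a normal unital endomorphism of $\CB(\CH_\phi)$, and check directly that it extends $\theta^\gamma$ in the sense of \eqref{ext}. Concretely, for $x \in M$ and $y' \in M'$, write $x = \pi_\phi(\gamma(a))= u\pi_\phi(a)u^*$ and $y' = \gamma'_\phi(b')= u b' u^*$ with $a \in M$, $b' \in M'$; then
\begin{align*}
\beta(xy') &= u\,\widetilde{\theta}_\phi\bigl(u^* (u\pi_\phi(a)u^*)(u b' u^*) u\bigr)\,u^* \\
&= u\,\widetilde{\theta}_\phi\bigl(\pi_\phi(a)\, b'\bigr)\,u^* \\
&= u\,\theta(a)\,\theta'_\phi(b')\,u^* \\
&= \bigl(u\theta(a)u^*\bigr)\bigl(u\theta'_\phi(b')u^*\bigr) \\
&= \gamma(\theta(a))\; (\theta^\gamma)'_\phi(y'),
\end{align*}
where in the last line I use that $\Ad(u)$ conjugates $\theta$ to $\gamma\circ\theta\circ\gamma^{-1} = \theta^\gamma$ on $M$ (immediate from $x = \gamma(a)$, so $a = \gamma^{-1}(x)$ and $u\theta(\gamma^{-1}(x))u^* = \gamma(\theta(\gamma^{-1}(x))) = \theta^\gamma(x)$), and correspondingly $\Ad(u)$ conjugates $\theta'_\phi$ on $M'$ to $(\theta^\gamma)'_\phi$ via the compatibility established above, together with the fact that $(\theta^\gamma)'_\phi = (\gamma'_\phi)\circ \theta'_\phi \circ (\gamma'_\phi)^{-1}$ because the primed construction $\theta \mapsto \theta'_\phi$ is functorial under conjugation by $J_\phi$. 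This shows $\beta$ satisfies \eqref{ext} for $\theta^\gamma$, hence $\theta^\gamma$ is $\phi$-modularly extendable with $\widetilde{\theta^\gamma}_\phi = \beta = \Ad(u)\widetilde{\theta}_\phi\Ad(u^*)$, and the uniqueness clause in the Definition guarantees this is the modular extension.

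The only genuinely delicate point — not hard, but requiring care — is bookkeeping on the commutant side: one must verify that the operator $\gamma'_\phi := J_\phi \gamma(J_\phi \,\cdot\, J_\phi)J_\phi$ is implemented by the \emph{same} unitary $u$ as $\gamma$, which is exactly where $J_\phi u = u J_\phi$ is used, and that the construction $\theta \mapsto \theta'_\phi$ intertwines conjugation, i.e.\ $(\gamma\theta\gamma^{-1})'_\phi = \gamma'_\phi\,\theta'_\phi\,(\gamma'_\phi)^{-1}$. Both are short computations with $J_\phi$-conjugation, using that $J_\phi^2 = 1$ and that $\Ad(J_\phi)$ is an (conjugate-linear) anti-automorphism swapping $M$ and $M'$. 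Once these identities are in hand the rest is the formal verification above plus the appeal to normality for uniqueness. I do not expect any serious obstacle; the statement is essentially a functoriality assertion packaged around Remark~\ref{rem:automorphism-extendable}.
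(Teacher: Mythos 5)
Your proof is correct, and it is exactly the argument the paper has in mind: the paper offers no written proof, asserting only that the theorem ``follows immediately'' from Remark~\ref{rem:automorphism-extendable}, and your computation (conjugating $\widetilde{\theta}_\phi$ by the $J_\phi$-commuting unitary $u$ implementing $\gamma$, checking that $\Ad(u)$ simultaneously implements $\gamma$ on $M$ and $\gamma'_\phi$ on $M'$, and invoking uniqueness of normal extensions) is precisely the intended filling-in of that assertion.
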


\section{Equimodular endomorphisms}\label{sec:equimodularity}

We now consider the concept of equimodularity of an endomorphism on a von Neumann algebra with respect to an \fns weight.  This is a generalization of the framework of \cite{BISS}, which focused
on faithful normal states. We exhibit a convenient necessary and sufficient 
condition for the equimodularity of an endomorphism with respect a fixed weight on von Neumann algebra. 

Given a noncommutative measure space $(M,\phi)$,  
let $\theta$ be a unital normal endomorphism of $M$ 
which is $\phi$-preserving, i.e.
\[
\phi(\theta(x)) = \phi(x),\qquad x \in M^+.
\]
This invariance assumption implies that there exists a unique 
well-defined isometry $u_\theta \in B(\CH_{\phi})$ given by $u_\theta (x_{\phi}) = (\theta(x))_{\phi},$  for $x \in \FN_{\phi}$.
It is clear that $u_\theta x = \theta(x) u_\theta$, for all $x \in \FN_{\phi}$. Futhermore, since $\phi$ is semi-finite, 
$\FN_{\phi}$ is dense in $M$ in the weak operator topology, 
so we have  
\[
u_\theta x = \theta(x) u_\theta, \qquad \forall x\in M.
\]

 \begin{Definition}\label{equimodular}
Given a noncommutative measure space $(M,\phi)$,
a unital endomorphism $\theta:M \to M$ will be called \emph{equimodular}
if $\phi$ is $\theta$-invariant and $u_\theta J_\phi = J_\phi u_\theta$.
 \end{Definition}
 
\begin{Remark}
The definition of equimodularity depends on the weight $\phi$. So strictly speaking,  we should call it $\phi$-equimodular, but we shall 
not do so in the interest of  notational convenience.   
Furthemore, we note that this definition applies also to weights, 
in a slight generalization of \cite{BISS}.
\end{Remark}

\begin{Example}
 Any unital normal endomorphism on a II$_1$ factor is equimodular with respect to the trace  (see \cite{BISS}).
\end{Example}

Given a unital normal endomorphism $\theta$ on a factor, we now describe a necessary and sufficient condition for the existence of an \fns weight $\phi$ with respect to which $\theta$ is equimodular. As a consequence, we
exhibit endomorphisms which are not equimodular with respect to
any \fns weight.

The necessary part of the following theorem was proven
for equimodularity with respect to \emph{states} 
 in   \cite[Remark 3.2]{BISS}. Here we extend the original
 proof of necessity to general weights, and moreover prove also sufficiency of the criterion.

\begin{Theorem}\label{equi}
Let $(M, \phi)$ be a non-commutative measure space. Suppose 
$\theta$ is a unital normal  endomorphism on $M$
which is $\phi$-preserving, i.e.\ $\phi(\theta(x)) = \phi(x)$ for all 
 $x \in M^{+}$. Then $\theta$
 is equimodular if and only if there exists a faithful normal conditional expectation 
 $E: M \rightarrow \theta(M)$ which is $\phi$-preserving, i.e.\  
 \[ 
 \phi(E(x)) = \phi(x),\qquad \forall x\in M^+.
 \]
Furthermore, such a conditional expectation is unique if it exists since
$E(x) e_\theta =  e_\theta x e_\theta$ for all
 $x\in M$, where $e_\theta$ is the projection onto the closure of $(\theta(M)\cap \FN_\phi)_\phi$, and moreover $e_\theta=u_\theta u_\theta^*$.
\end{Theorem}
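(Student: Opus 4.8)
The plan is to prove the two implications separately, then establish uniqueness and the formula for $E$.

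For the forward direction, assume $\theta$ is equimodular, so $u_\theta J_\phi = J_\phi u_\theta$. Set $e_\theta = u_\theta u_\theta^*$, which is a projection since $u_\theta$ is an isometry. First I would identify the range of $e_\theta$: since $u_\theta(x_\phi) = (\theta(x))_\phi$ for $x \in \FN_\phi$, the range of $u_\theta$ is the closure of $\{(\theta(x))_\phi : x \in \FN_\phi\}$, and I claim this equals the closure of $(\theta(M) \cap \FN_\phi)_\phi$; this needs the observation that $\theta$ is $\phi$-preserving so $\theta(\FN_\phi) \subseteq \FN_\phi$, together with a density argument using semifiniteness of $\phi$ restricted to $\theta(M)$. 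Next, from $u_\theta x = \theta(x) u_\theta$ for all $x \in M$ one gets $e_\theta \theta(x) = \theta(x) e_\theta$, so $e_\theta \in \theta(M)'$. The key point is that $e_\theta$ also lies in the algebra generated by $\theta(M)$ and $J_\phi \theta(M) J_\phi$: indeed $e_\theta = u_\theta u_\theta^*$, and the equimodularity relation lets one show $e_\theta$ commutes with $J_\phi \theta(x) J_\phi$ as well — actually $e_\theta$ is the Jones projection for the inclusion $\theta(M) \subseteq M$ in the GNS space of $\phi$. Then define $E(x) = $ the unique element of $\theta(M)$ with $E(x) e_\theta = e_\theta x e_\theta$; one checks $e_\theta x e_\theta \in \theta(M) e_\theta$ using the commutation relations, that $E$ is a normal conditional expectation onto $\theta(M)$, and that it is $\phi$-preserving by computing $\phi(E(x)) = \langle E(x) \xi, \xi \rangle$-type identities against vectors approximating the "identity", or more cleanly via $\phi \circ \theta^{-1}|_{\theta(M)}$ and the standard characterization of $\phi$-invariant expectations.

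For the converse, assume a faithful normal $\phi$-preserving conditional expectation $E : M \to \theta(M)$ exists. The standard Takesaki-type theory of conditional expectations says that $E$ is $\phi$-preserving if and only if $\sigma_t^\phi$ leaves $\theta(M)$ invariant and $E \circ \sigma_t^\phi = \sigma_t^\phi \circ E$; moreover in that case the Jones projection $e$ implementing $E$ (i.e. $e x e = E(x) e$) commutes with both $\Delta_\phi^{it}$ and $J_\phi$. I would then show that this Jones projection $e$ coincides with $u_\theta u_\theta^*$: the range of $e$ is the closure of $(\theta(M) \cap \FN_\phi)_\phi$ by construction of $e$ from $E$, which is exactly the range of $u_\theta$ identified above. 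Since $e$ commutes with $J_\phi$ and $e = u_\theta u_\theta^*$, and since $u_\theta$ maps onto the range of $e$, a short argument (using that $J_\phi$ is antiunitary and fixes the relevant dense subspace suitably, via $J_\phi (\theta(x))_\phi = (\theta(\sigma^\phi_{-i/2}(x))^*)_\phi$ type formulas) gives $J_\phi u_\theta = u_\theta J_\phi$, i.e. equimodularity.

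The uniqueness of $E$ and the formula $E(x) e_\theta = e_\theta x e_\theta$ come out of the forward-direction construction: any $\phi$-preserving conditional expectation onto $\theta(M)$ must have its Jones projection equal to the projection onto $(\theta(M) \cap \FN_\phi)_\phi$, which is $e_\theta$, and the relation $E(x) e_\theta = e_\theta x e_\theta$ determines $E(x)$ uniquely since $\theta(M) \ni a \mapsto a e_\theta$ is injective (as $\theta(M)$ acts nondegenerately, $e_\theta \neq 0$, and $e_\theta \in \theta(M)'$). I expect the main obstacle to be the careful identification of the range of $u_\theta$ with the closure of $(\theta(M)\cap \FN_\phi)_\phi$ and the verification that $e_\theta$ is genuinely the Jones projection of the inclusion $\theta(M) \subseteq M$ — this requires handling the unbounded/weight-theoretic subtleties (semifiniteness of $\phi|_{\theta(M)}$, density of $\theta(\FN_\phi)$ in $\theta(M)\cap\FN_\phi$ in the appropriate topology) rather than any deep new idea; once $e_\theta$ is correctly understood, both implications follow from the standard theory relating conditional expectations, their Jones projections, and modular structure.
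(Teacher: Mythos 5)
Your overall architecture is plausible and your identification of the range of $u_\theta$ with the closure of $(\theta(M)\cap\FN_\phi)_\phi$ is correct, but there is a genuine gap at the heart of the converse direction. You reduce equimodularity to the statement that the Jones projection $e_\theta=u_\theta u_\theta^*$ commutes with $J_\phi$ and $\Delta_\phi^{it}$, and then appeal to ``a short argument.'' But a projection $uu^*$ commuting with $J_\phi$ only says that $J_\phi$ preserves the range of $u_\theta$; it carries essentially no information about whether $u_\theta$ itself commutes with $J_\phi$ (for $u$ unitary, $uu^*=1$ commutes with everything). The ``type formulas'' you invoke, of the shape $J_\phi(\theta(x))_\phi=(\theta(\sigma^\phi_{-i/2}(x))^*)_\phi$, are exactly equivalent to the identity $\sigma^\phi_t\circ\theta=\theta\circ\sigma^\phi_t$, which is the real content to be proved and does not follow from the Takesaki-type theory you cite: that theory gives global invariance $\sigma^\phi_t(\theta(M))=\theta(M)$, i.e.\ $\sigma^\phi_t\circ\theta=\theta\circ\beta_t$ for \emph{some} one-parameter group $\beta_t$ of automorphisms of $M$, not that $\beta_t=\sigma^\phi_t$. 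The missing ingredient is that the $\phi$-preserving isomorphism $\theta\colon M\to\theta(M)$ intertwines $\sigma^\phi$ with $\sigma^{\phi|_{\theta(M)}}$; the paper establishes precisely this by verifying the KMS condition for the group $\pi_t=\mathrm{Ad}(u_\theta\Delta_\phi^{it}u_\theta^*)$ on $\theta(M)$ relative to $\phi|_{\theta(M)}$ (the lemma following the theorem). Even once $u_\theta\Delta_\phi^{it}=\Delta_\phi^{it}u_\theta$ is in hand, one still needs an analytic continuation on the strip $0\le\re z\le\half$ to obtain $u_\theta\Delta_\phi^{\half}=\Delta_\phi^{\half}u_\theta$ on $D(\Delta_\phi^\half)$, which combined with the identity $u_\theta S_\phi=S_\phi u_\theta$ (which you should record explicitly; it is an easy computation on the left Hilbert algebra) finally yields $u_\theta J_\phi=J_\phi u_\theta$.

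The forward direction has a milder version of the same soft spot: building $E$ directly from $e_\theta$ requires showing $e_\theta x e_\theta\in\theta(M)e_\theta$ for every $x\in M$, which you assert ``using the commutation relations'' but which is the existence question in disguise. The clean route, and the paper's, is to derive from equimodularity together with $u_\theta S_\phi=S_\phi u_\theta$ that $u_\theta$ commutes with $\Delta_\phi^{it}$, hence that $\theta(M)$ is $\sigma^\phi$-invariant and $\phi|_{\theta(M)}$ is semifinite, and then quote Takesaki's theorem for existence, uniqueness, and the formula $E(x)e_\theta=e_\theta xe_\theta$. If you repair the converse as above, the rest of your outline goes through.
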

 \begin{proof}
 Firstly, let $\theta$ be a unital normal endomorphism of $M$ which is
 $\phi$-preserving.   Let 
 $\FA_{\phi} = \FN_{\phi} \cap \FN_{\phi}^{*} \subseteq \CH_{\phi}$
be the left Hilbert algebra associated to 
the \fns weight $\phi$ of $M$ (see \cite[2.11, p.24]{SS}) and 
let 
 $S_{\phi}$ be the corresponding anti-linear 
  Tomita operator on $\CH_{\phi}$. 
 For $x_{\phi} \in \FA_{\phi} \subseteq D(S_{\phi})$, observe that
 $u_\theta x_\phi = (\theta(x))_\phi \in D(S_\phi)$ and 
 \begin{align*}
 S_{\phi}u_{\theta} x_{\phi} &=  S_{\phi}(\theta (x))_{\phi} = (\theta (x^*))_{\phi} = u_{\theta}(x^*)_{\phi} = u_{\theta} S_{\phi}x_{\phi}.
 \end{align*} 
 So we conclude that   
\begin{equation}\label{eq:S-phi}
S_{\phi}u_{\theta} \xi = u_{\theta} S_{\phi} \xi, \qquad \forall \xi \in D(S_{\phi}).
\end{equation}

$(\Rightarrow)$ Suppose $\theta$ is equimodular. Since $u_\theta$ commutes with $J_\phi$ and $S_{\phi}=J_{\phi}\Delta_{\phi}^\half$, 
we have $\Delta_{\phi}^\half u_\theta =  u_\theta \Delta_{\phi}^\half$ 
on $D(\Delta_\phi^\half)$, 
and so $\Delta_{\phi}^{it}$ commutes with $u_\theta$ for all $t\in \R$. 
Hence, for all $x \in \FN_{\phi}$ and $t \in \R$, 
by \cite[Chapter 1, Section 2.12]{SS} we have
\begin{align*}
(\theta\circ\sigma^\phi_t(x))_{\phi} &= u_\theta (\Delta_{\phi}^{it} x \Delta_{\phi}^{-it}) _{\phi}
= u_{\theta} \Delta_{\phi}^{it} x_{\phi}
= \Delta_{\phi}^{it} u_\theta x_{\phi}  
= \Delta_{\phi}^{it} (\theta(x))_{\phi} \\
& = (\sigma^\phi_t \circ\theta(x)) _{\phi}.
\end{align*}
Since $\phi$ is faithful, we conclude  that 
\[
\theta\circ\sigma^\phi_t(x) = \sigma^\phi_t \circ\theta(x),
\qquad  \forall x \in \FN_{\phi}.
\] 
Now semi-finiteness of $\phi$ implies that  
\[
\theta\circ\sigma^\phi_t(x) = \sigma^\phi_t \circ\theta(x),
\qquad \forall x \in M.
\]
Furthermore, the fact that $\theta$ is $\phi$-preserving implies that $\phi|_{\theta(M)}$ is also 
semi-finite for $\theta(M)$. It follows from Takesaki's theorem (see \cite[Section 3, p.~309]{Ta}) 
that there exists a unique $\phi$-preserving
 conditional expectation $E$ of $M$ onto  $\theta(M)$. 
Moreover, it follows from \cite[p. 315]{Ta} or \cite[p.131]{SS}  that $E$ is unique and satisfies $E(x)e_\theta = e_\theta x e_\theta$ for all $x \in M$, where $e_\theta = u_\theta u_{\theta}^*$ is the projection onto the closure of $(\theta(M)\cap \FN_\phi)_\phi$.

($\Leftarrow$) Conversely, let us assume that there exists a normal conditional 
expectation $E$ from  $M$ onto $\theta(M)$ which is $\phi$-preserving. We need to prove that $\theta$ is equimodular, i.e.\ $u_{\theta}J_{\phi} = J_{\phi} u_{\theta}$. 
By eq.~\eqref{eq:S-phi}, $u_{\theta} S_{\phi} = S_{\phi}u_{\theta}$
on $D(S_\phi)= D(\Delta_\phi^\half)$. Hence it is enough 
 to show that $u_{\theta}\Delta_{\phi}^\half =  \Delta_{\phi}^\half u_{\theta} $ on $D(\Delta_\phi^\half)$. The following lemma 
 will complete the proof. 
 \end{proof}
 
 \begin{Lemma}
 Let $(M,\phi)$ be a noncommutative measure space. 
 Suppose that $\theta$ is a unital $\phi$-preserving normal endomorphism on $M$ such that there exists 
 a faithful normal conditional expectation $E$ from $M$ onto $\theta(M)$ which is $\phi$-preserving.  Then  for all $t\in \R$, 
\begin{enumerate}\addtolength{\itemsep}{0.4cm}
 \item\label{lemma-cond1}
 $ \theta \circ \sigma^{\phi}_t = \sigma_t^{\phi} \circ \theta$

 \item\label{lemma-cond2} $u_\theta D(\Delta_\phi^\half) \subseteq 
 D(\Delta_\phi^\half)$ and 
 $u_{\theta}\Delta_{\phi}^\half = \Delta_{\phi}^\half u_{\theta} $
 on $D(\Delta_\phi^\half)$.
\end{enumerate}
 \end{Lemma}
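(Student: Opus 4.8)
The plan is to prove \eqref{lemma-cond1} first, using Takesaki's theorem on the existence of modular-invariant conditional expectations, and then to deduce \eqref{lemma-cond2} from it by an elementary argument in Tomita--Takesaki theory together with the spectral theorem.

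\emph{Step 1: proof of \eqref{lemma-cond1}.} Since $E\colon M\to\theta(M)$ is a faithful normal $\phi$-preserving conditional expectation, Takesaki's theorem (\cite[Section 3]{Ta}, see also \cite{SS}) guarantees that $\phi|_{\theta(M)}$ is semifinite, that $\theta(M)$ is globally invariant under the modular automorphism group, i.e.\ $\sigma^{\phi}_t(\theta(M))=\theta(M)$ for all $t\in\R$, and that the restriction $\sigma^{\phi}_t|_{\theta(M)}$ is precisely the modular automorphism group of the restricted weight $\phi|_{\theta(M)}$. On the other hand, $\theta$ is a normal $*$-isomorphism of $M$ onto $\theta(M)$ with $(\phi|_{\theta(M)})\circ\theta=\phi$, so by the uniqueness of the modular automorphism group characterized by the KMS condition, $\theta$ transports $\sigma^{\phi}_t$ on $M$ to the modular automorphism group of $\phi|_{\theta(M)}$ on $\theta(M)$. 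Chaining these two facts gives $\theta\circ\sigma^{\phi}_t=\sigma^{\phi}_t|_{\theta(M)}\circ\theta=\sigma^{\phi}_t\circ\theta$, which is \eqref{lemma-cond1}.

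\emph{Step 2: proof of \eqref{lemma-cond2}.} I would first show that $u_\theta$ commutes with the one-parameter unitary group $\{\Delta_{\phi}^{it}\}_{t\in\R}$. Both $\sigma^{\phi}_t$ and $\theta$ preserve $\FN_{\phi}$ (they preserve $\phi$), and one has the standard identities $u_\theta x_{\phi}=(\theta(x))_{\phi}$ and $\Delta_{\phi}^{it}x_{\phi}=(\sigma^{\phi}_t(x))_{\phi}$ for $x\in\FN_{\phi}$. Hence, using \eqref{lemma-cond1},
\[
u_\theta\Delta_{\phi}^{it}x_{\phi}=u_\theta(\sigma^{\phi}_t(x))_{\phi}=(\theta(\sigma^{\phi}_t(x)))_{\phi}=(\sigma^{\phi}_t(\theta(x)))_{\phi}=\Delta_{\phi}^{it}(\theta(x))_{\phi}=\Delta_{\phi}^{it}u_\theta x_{\phi}.
\]
Since $(\FN_{\phi})_{\phi}$ is dense in $\CH_{\phi}$ and the operators involved are bounded, $u_\theta\Delta_{\phi}^{it}=\Delta_{\phi}^{it}u_\theta$ for every $t\in\R$. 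A bounded operator commuting with the unitary group $\{\Delta_{\phi}^{it}\}$ commutes with the spectral measure of $\Delta_{\phi}$ (by the uniqueness clause of Stone's theorem), hence with every bounded Borel function of $\Delta_{\phi}$; approximating by the spectral projections of $\Delta_{\phi}$ associated to the intervals $[1/n,n]$ and using that $\Delta_{\phi}^\half$ is closed then shows $u_\theta D(\Delta_{\phi}^\half)\subseteq D(\Delta_{\phi}^\half)$ and $u_\theta\Delta_{\phi}^\half=\Delta_{\phi}^\half u_\theta$ on $D(\Delta_{\phi}^\half)$. This is \eqref{lemma-cond2}. (Back in Theorem~\ref{equi}, combining this with \eqref{eq:S-phi} and $S_{\phi}=J_{\phi}\Delta_{\phi}^\half$ gives $u_\theta J_{\phi}=J_{\phi}u_\theta$ on the dense range of $\Delta_{\phi}^\half$, hence everywhere, completing that proof.)

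The main obstacle is Step 1: one must invoke the precise form of Takesaki's theorem that not only yields $\sigma^{\phi}$-invariance of $\theta(M)$ but also identifies the restricted modular group $\sigma^{\phi}|_{\theta(M)}$ with that of $\phi|_{\theta(M)}$, since it is exactly this identification that lets the intertwining through $\theta$ close the argument. Everything after that --- the GNS identities and the passage from commutation with $\Delta_{\phi}^{it}$ to commutation with $\Delta_{\phi}^\half$ --- is routine. (Alternatively, in Step 1 one could bypass quoting that exact statement by verifying directly that $t\mapsto\theta\circ\sigma^{\phi}_t\circ\theta^{-1}$ satisfies the modular KMS condition for $\phi|_{\theta(M)}$, but one still needs that $\sigma^{\phi}$ and $\sigma^{\phi|_{\theta(M)}}$ agree on $\theta(M)$, which is again the content of Takesaki's theorem given that $E$ exists.)
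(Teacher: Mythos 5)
Your proof is correct, and while its overall skeleton matches the paper's (first establish $\theta\circ\sigma^\phi_t=\sigma^\phi_t\circ\theta$, deduce $u_\theta\Delta_\phi^{it}=\Delta_\phi^{it}u_\theta$ from the GNS identities, then upgrade to $\Delta_\phi^{1/2}$), both main steps are executed differently. In Step 1 the paper does not invoke the naturality of the modular group under $*$-isomorphisms as a black box: it introduces $\pi_t=\mathrm{Ad}(u_\theta\Delta_\phi^{it}u_\theta^*)$ on $\theta(M)$ and verifies by hand that $\psi=\phi|_{\theta(M)}$ satisfies the KMS condition with respect to $\{\pi_t\}$, concluding $\pi_t=\sigma^\psi_t$ by uniqueness; your appeal to the general transport-of-structure principle (that $\psi\circ\theta=\phi$ forces $\sigma^\psi_t=\theta\circ\sigma^\phi_t\circ\theta^{-1}$) is that same uniqueness argument packaged as a standard fact, so it is legitimate and shorter, while the paper's version is self-contained. (One small imprecision: semifiniteness of $\phi|_{\theta(M)}$ is a \emph{hypothesis} needed to apply Takesaki's theorem, not one of its conclusions; it follows here from $\phi\circ\theta=\phi$, as the paper notes in the proof of Theorem~\ref{equi}.) In Step 2 the routes genuinely diverge: the paper extends the functions $it\mapsto\Delta_\phi^{it}u_\theta x_\phi$ and $it\mapsto u_\theta\Delta_\phi^{it}x_\phi$ analytically to the strip $\{0\le\re z\le 1/2\}$ via \cite[Corollary 9.21]{SZ} and a Schwarz reflection argument, then evaluates at $z=1/2$; you instead pass from commutation with the unitary group to commutation with the spectral projections $E([1/n,n])$ of $\Delta_\phi$ and use closedness of $\Delta_\phi^{1/2}$. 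Your spectral argument is more elementary, avoids the analyticity machinery entirely, and immediately gives the domain inclusion $u_\theta D(\Delta_\phi^{1/2})\subseteq D(\Delta_\phi^{1/2})$ for \emph{all} of $D(\Delta_\phi^{1/2})$ rather than first on the vectors $x_\phi$; the paper's analytic-continuation method is the one that generalizes to intertwining $\Delta_\phi^{z}$ for other values of $z$ in the strip, but that extra generality is not needed here. Both proofs are complete.
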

\begin{proof}
 Let $\psi= \phi|_{\theta(M)}$, which is an \fns weight on 
 $\theta(M)$ and let  $\CH_\psi$ be its GNS Hilbert space. Under the
 the conditions of the lemma, by Takesaki's Theorem 
 \cite[Section 3, p.~309]{Ta} we have that 
 $\sigma^{\phi}_t(\theta(M)) = \theta(M)$. Furthermore, note that
\begin{equation}\label{restriction-modular}
\sigma^{\phi}_t|_{\theta(M)}= \sigma^{\psi}_t,~t \in \R.
\end{equation}
Let $u_\theta\in B(\CH_\phi)$ be given by 
 $u_\theta (x_{\phi}) = (\theta(x))_{\phi} = (\theta(x))_{\psi}$,
 since $\psi$ is the restriction of $\phi$ to $\theta(M)$. With
 this identification, $u_\theta$ becomes a \emph{unitary}
 map from $u_\theta: \CH_\phi \to \CH_\psi$ such that 
 $\theta(x) = u_{\theta}xu_{\theta}^*$, for all $x \in M$.

Now let us consider $\pi_t : \theta(M) \to \theta(M)$ given by 
\[
\pi_t(y) = u_{\theta} \Delta_{\phi}^{it} u_{\theta}^* y u_{\theta}\Delta_{\phi}^{-it}u_{\theta}^*, \qquad y \in \theta(M).
\]
We note that $\{\pi_t : t\in\R\}$ defines a group 
 of $*$-automorphisms of $\theta(M)$ which is $\psi$-preserving. Indeed, 
 for every $x \in \theta(M)^{+}$, since $\theta$
 is $\phi$-preserving we have that 
 \begin{align*}
 \psi(\pi_t(\theta(x))) 
 &=\phi(u_{\theta}\Delta_{\phi}^{it} u_{\theta}^*\theta(x) u_{\theta}\Delta_{\phi}^{-it}u_{\theta}^*)
 =  \phi(u_{\theta} \sigma_t^{\phi}(x)u_{\theta}^*)\\
 &=  \phi(\theta(\sigma_t^{\phi}(x))) =  \phi(\sigma_t^{\phi}(x)) \\
 & =  \phi(x) = \phi(\theta(x))= \psi(\theta(x))
 \end{align*}
Since $\phi$ satisfies 
the KMS condition with respect to  $\{\sigma^{\phi}_t\}$, 
given $x, y \in \FN_{\phi} \cap \FN_{\phi}^{*}$,
 there exists a bounded continuous function  
 $f_{x,y} : \{ z\in\C : 0 \leq \re z \leq 1\} \to \C$
 which is analytic in the interior of the strip
 such that for all $t\in \R$,
\[
 f_{x,y}(it) = \phi(x\sigma^{\phi}_t(y)), 
 \qquad  \text{and} \qquad
 f_{x,y}(1+it) = \phi(\sigma^{\phi}_t(y)x).
 \]
We now claim that $\psi$ satisfies the KMS condition with 
respect to $\{\pi_t\}$. Indeed, given any pair 
$a, b \in \FN_\psi\cap \FN_\psi^* (\subseteq \theta(M))$, 
there exist unique $x,y \in M$ such
that $a = \theta(x), b=\theta(y)$. Since $\theta$ is $\phi$-preserving, we have that $x,y \in \FN_\phi \cap \FN_\phi^*$.
Hence the function
$\widetilde{f}_{a,b} : \{ z\in\C : 0 \leq \re z \leq 1\} \to \C$
given by  $\widetilde{f}_{a,b} = f_{x,y}$ is well-defined. Moreover, for all $t\in\R$,
\begin{align*}
\psi(a\pi_t(b)) & =
\psi(\theta(x)\pi_t(\theta(y))) 
= \psi(\theta(x)u_{\theta}\Delta_{\phi}^{it} u_{\theta}^*\theta(y) u_{\theta}\Delta_{\phi}^{-it}u_{\theta}^*) \\
& = \phi(\theta(x)u_{\theta} \sigma_t^{\phi}(y)u_{\theta}^*) 
 = \phi(u_{\theta}xu_{\theta}^*u_{\theta} \sigma_t^{\phi}(y)u_{\theta}^*)
= \phi(u_{\theta}x\sigma_t^{\phi}(y)u_{\theta}^*) \\
& = \phi(\theta(x\sigma_t^{\phi}(y))) 
= \phi(x\sigma_t^{\phi}(y)) = f_{x,y}(it) \\
& = \widetilde{f}_{a,b}(it)
\end{align*}
and
\begin{align*}
\psi(\pi_t(b) a) & =
\psi(\pi_{t}(\theta(y))\theta(x))
= \psi(u_{\theta}\Delta_{\phi}^{it} 
u_{\theta}^*\theta(y) u_{\theta}\Delta_{\phi}^{it}u_{\theta}^*\theta(x)) \\
& = \phi(u_{\theta} \sigma_t^{\phi}(y)u_{\theta}^*\theta(x))
= \phi(u_{\theta} \sigma_t^{\phi}(y)u_{\theta}^*u_{\theta}xu_{\theta}^*)
= \phi(u_{\theta} \sigma_t^{\phi}(y)xu_{\theta}^*)\\
&= \phi(\theta (\sigma_t^{\phi}(y)x))
= \phi(\sigma_t^{\phi}(y)x) = f_{x,y}(1+it)\\
& = \widetilde{f}_{a,b}(1+it).
\end{align*}
Hence, by the uniqueness of the modular automorphism group, we have that
\begin{equation}\label{eq:pi-is-sigma}
 \sigma^{\psi}_t = \pi_t,  \qquad t \in \R.
\end{equation}
We are now in position to prove property~\eqref{lemma-cond1}. 
For all $x \in M$, by \eqref{restriction-modular} and 
\eqref{eq:pi-is-sigma} we have 
\begin{align*}
\theta(\sigma_t^{\phi}(x)) 
&= u_{\theta} \sigma_t^{\phi}(x)u_{\theta}^*
= u_{\theta}\Delta_{\phi}^{it}x \Delta_{\phi}^{-it}u_{\theta}^*\\
&= u_{\theta}\Delta_{\phi}^{it}u_{\theta}^*u_{\theta}xu_{\theta}^*u_{\theta} \Delta_{\phi}^{-it}u_{\theta}^* \\
& = \pi_t(\theta(x))
= \sigma^{\psi}_t(\theta(x))\\
&= \sigma^{\phi}_t(\theta(x)).
\end{align*}

We now prove property \eqref{lemma-cond2}. Recall that for all $x\in \FN_\phi$ and $t\in \R$, we have that
$\Delta^{it} x_\phi = (\sigma_t^\phi(x))_\phi$ (see for example
\cite[p.27]{SS}). Therefore, by property \eqref{lemma-cond1},
for all $x \in \FN_{\phi}$, we have that
\begin{align*}
u_{\theta}\Delta_{\phi}^{it}x_{\phi}
& = u_\theta (\sigma_t^{\phi}(x))_{\phi} 
=(\theta (\sigma_t^{\phi}(x)))_{\phi}
=(\sigma_t^{\phi}(\theta(x)))_{\phi}\\
&=\Delta_{\phi}^{it} (\theta(x) )_{\phi}
=\Delta_{\phi}^{it}u_{\theta}x _{\phi}.
\end{align*}
Thus, 
\begin{equation}\label{eq:u-commutes-delta}
u_{\theta}\Delta_{\phi}^{it} = \Delta_{\phi}^{it}u_{\theta}.  
\end{equation}
Notice that if $x\in \FN_\phi\cap \FN_\phi^*$ then also 
$\theta(x) \in \FN_\phi\cap \FN_\phi^*$ and therefore
$x_\phi \in D(\Delta_\phi^\half)$ and  $u_\theta x_\phi = (\theta(x))_\phi \in D(\Delta_\phi^\half)$. Hence, it follows from
\cite[Corollary 9.21]{SZ}
that  
both maps $ it \rightarrow \Delta_{\phi}^{it}u_{\theta} x_{\phi}$ and
$ it \rightarrow u_{\theta}\Delta_{\phi}^{it}x_{\phi}$ 
have continuous extensions to the 
strip $\{ z\in \C :  0 \leq \text{Re}(z) \leq 1/2 \}$ which 
are analytic in the interior. Furthermore, by 
\eqref{eq:u-commutes-delta}, the two functions
coincide on the imaginary axis, therefore they coincide on
the entire strip. (This follows from a standard argument using
the Schwarz reflection principle and gluing the reflection to
the original function). Hence, by  taking $ z = \half$, we 
conclude that $u_{\theta}\Delta_{\phi}^{\half} x_\phi = \Delta_{\phi}^{\half}u_{\theta} x_\phi$. 
Therefore $u_\theta D(\Delta_\phi^\half) \subseteq D(\Delta_\phi^\half)$
and $u_{\theta}\Delta_{\phi}^{\half} = \Delta_{\phi}^{\half}u_{\theta}$
on $D(\Delta_\phi^\half)$.
\end{proof}

 \begin{Remark}\label{not-image-expectation}
The necessary and sufficient condition provided by 
Theorem~\ref{equi} is quite restrictive. In fact, it is
easy to find endomorphisms whose range is not the image
of a normal conditional expectation, without referring to
the preservation of any weight. For example, let 
$\CH$ be an infinite dimensional separable Hilbert space
and let $M\subseteq \CB(\CH)$ be a type III factor.
  Note that  the exists a *-isomorphism 
  $\gamma: M\otimes \CB(\CH) \rightarrow M$, since $M$ is a type III factor.  Let $\iota: M \rightarrow 1\otimes M \subseteq M\otimes\CB(\CH)$ be the canonical injection, i.e.\
 $\iota(x) = 1\otimes x$, for $x \in M$. 
 Then $ \theta = \iota \circ \gamma: M\otimes\CB(\CH) \rightarrow M\otimes\CB(\CH)$ 
 is an endomorphism of $M\otimes\CB(\CH)$ such that
 $\theta(M\otimes \CB(\CH))=1\otimes M$. 
Note, however, that a normal (surjective) conditional expectation 
$E: M \otimes \CB(\CH) \to 1 \otimes M$ would restrict to a normal
conditional expectation   
$E|_{1 \otimes \CB(\CH)}: 1 \otimes \CB(\CH) \rightarrow 1 \otimes M$.
And such a conditional expectation can only exist if $M$ is type I
(see \cite{tomiyama-III}).
 \end{Remark}

\begin{Example}\label{ex:not-equimodular}
We use the principle of the previous remark to provide an example of endomorphism which is not equimodular with respect to any \fns weights.
Let $\CH$ be an infinite dimensional separable Hilbert space,  let 
 $M \subseteq \CB(\CH)$ be a type III factor with cyclic 
 separating vector $\Omega \in \CH$, and let $N \subseteq M$ be
 a subfactor such that there is no normal conditional expectation
 of $M$ onto $N$ (for an example, 
 see Remark~\ref{not-image-expectation}).  
  Let us consider the infinite
 tensor product  $K=\bigotimes_{n=1}^\infty (H, \Omega)$ 
 with respect to the reference vector  $\Omega^\infty = \bigotimes_{n=1}^\infty \Omega$. Let $A = N \otimes M \otimes M \dots 
 \subseteq B(K)$, in other words $A$ is the factor generated by
 elements of the form $n \otimes m_1 \otimes m_2 \otimes \cdots \otimes m_k \otimes I \otimes I \cdots$ where $n\in N$ and $m_j \in M$ for
 $j=1,\dots, k$ (see \cite{guichardet-infinite-tensor-products}). Note
 that we can write $A = N \otimes M \otimes B$ where $B= \bigotimes_{n=1}^\infty M$ in the natural way.  
  
 Let $\theta : A \rightarrow A$ be the shift endomorphism, so that
 $\theta(A) = 1 \otimes N \otimes B$. It is clear that $\theta$ preserves the vector
 state corresponding to $\Omega^\infty$ (in particular, it preserves \emph{some} \fns weight). Suppose towards a contradiction 
 that $\theta$  is equimodular with respect to some \fns weight $\phi$.
 Then by Theorem~\ref{equi} there exists a normal conditional expectation
 $E$ from $A$ onto $\theta(A)$ which is $\phi$-preserving. In that 
 case, we claim that $E(1_N \otimes M \otimes 1_B ) = 1_N \otimes N \otimes 1_B$, which is a contradiction, since there is no normal conditional  expectation from $M$ onto $N$. Indeed, notice that
 $1 \otimes M \otimes 1 = A \cap (N\otimes 1 \otimes B)'$. So if 
 $x\in 1\otimes M \otimes 1$ and $y\in 1 \otimes 1 \otimes B \subseteq  \theta(A)$, then
 \[
 E(x) y = E(xy) = E(yx) = y E(x)
 \]
 Thus $E(x) \in (1 \otimes 1 \otimes B)'\cap \theta(A) = 1 \otimes N \otimes 1$.
\end{Example}

We now provide the promised convenient condition for the modular
extendibility of equimodular endomorphisms.

\begin{Theorem}\label{extcon}
Let $(M, \phi)$ be a noncommutative factorial measure space and let $\theta$
be a unital normal endomorphism of $M$ which is equimodular. If 
\[
(\theta(M) \cup (M \cap \theta(M)'  )) '' = M
\]
then $\theta$ is modularly extendable.
\end{Theorem}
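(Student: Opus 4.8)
The plan is to build the candidate extension $\widetilde{\theta}_\phi$ on $\CB(\CH_\phi)$ directly from the equimodularity data, using the isometry $u_\theta$. Recall that equimodularity gives us $u_\theta \in \CB(\CH_\phi)$ with $u_\theta x = \theta(x) u_\theta$ for all $x \in M$, and $u_\theta J_\phi = J_\phi u_\theta$. The natural guess for the extension is $\widetilde{\theta}_\phi(T) = u_\theta T u_\theta^*$, restricted appropriately, or more precisely, we want a normal endomorphism of $\CB(\CH_\phi)$ satisfying $\widetilde{\theta}_\phi(xy') = \theta(x)\theta_\phi'(y')$ for $x \in M$, $y' \in M'$. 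First I would check that $T \mapsto u_\theta T u_\theta^*$ maps $M$ correctly: $u_\theta x u_\theta^* = \theta(x) u_\theta u_\theta^* = \theta(x) e_\theta$, which is not quite $\theta(x)$, so the bare formula needs correction. Since $e_\theta = u_\theta u_\theta^*$ commutes with $\theta(M)$ (it is the range projection of $u_\theta$, and $u_\theta$ intertwines), the issue is only that $u_\theta$ is a proper isometry when $\theta$ is not surjective.

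The key idea to repair this is to use the hypothesis $(\theta(M) \cup (M \cap \theta(M)'))'' = M$. Let $\CC = M \cap \theta(M)'$ be the relative commutant. Since $u_\theta c u_\theta^* \in \CB(\CH_\phi)$ for $c \in \CC$, and one checks $u_\theta c u_\theta^*$ commutes with $\theta(M) e_\theta$ inside $e_\theta \CB(\CH_\phi) e_\theta$, I would argue that multiplication $\theta(x) \cdot (u_\theta c u_\theta^*)$ reconstructs, via the density hypothesis, a copy of $M$ acting on the full space $\CH_\phi$ — not merely on $e_\theta \CH_\phi$. Concretely, the strategy is: define a normal $*$-homomorphism from $M$ into $\CB(\CH_\phi)$ by sending a generating element $\theta(x) c$ (with $x \in M$, $c \in \CC$) to $\theta(x) \, u_\theta c u_\theta^* + $ (correction on $1 - e_\theta$), and show it is in fact the identity on $M$ because of how $u_\theta$ intertwines and because $\CC \subseteq M$ already acts on all of $\CH_\phi$. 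This forces $e_\theta$ to be compatible with the module structure. The cleanest route is probably to show that under the stated density condition, the projection $e_\theta$ lies in $(\theta(M) \cup \CC)' = M'$, hence $e_\theta \in M \cap M' = \C 1$, so $e_\theta = 1$ and $u_\theta$ is a \emph{unitary}; then $\widetilde{\theta}_\phi(T) = u_\theta T u_\theta^*$ works immediately, since it satisfies $\widetilde{\theta}_\phi(x) = \theta(x)$, and $\widetilde{\theta}_\phi(J_\phi y J_\phi) = u_\theta J_\phi y J_\phi u_\theta^* = J_\phi u_\theta y u_\theta^* J_\phi$, which one identifies with $\theta_\phi'(J_\phi y J_\phi)$ using that $u_\theta y u_\theta^* \in M$ for... wait, that is not right either, since $u_\theta$ need not normalize $M'$.

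So the actual plan: (i) show $e_\theta = u_\theta u_\theta^* \in \CC'$ (it commutes with everything commuting with $\theta(M)$ that lies in $M$), which needs that $e_\theta$ is the projection onto $\overline{(\theta(M) \cap \FN_\phi)_\phi}$ and hence $e_\theta \in \theta(M)' $ as well as reflecting the $\CC$-action; (ii) combined with $e_\theta \in \theta(M)'$ and the density hypothesis $(\theta(M) \cup \CC)'' = M$, deduce $e_\theta \in M'$, hence $e_\theta \in Z(M) = \C1$, so $e_\theta = 1$; (iii) with $u_\theta$ now unitary and commuting with $J_\phi$, define $\widetilde{\theta}_\phi = \Ad(u_\theta)$ and verify \eqref{ext}. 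For (iii): $\widetilde{\theta}_\phi(x) = u_\theta x u_\theta^* = \theta(x)$ for $x \in M$; for $y' \in M'$, write $y' = J_\phi y J_\phi$ with $y \in M$, so $\widetilde{\theta}_\phi(y') = u_\theta J_\phi y J_\phi u_\theta^* = J_\phi (u_\theta y u_\theta^*) J_\phi = J_\phi \theta(y) J_\phi = \theta_\phi'(J_\phi y J_\phi) = \theta_\phi'(y')$, using equimodularity $u_\theta J_\phi = J_\phi u_\theta$ in the middle and $u_\theta y u_\theta^* = \theta(y)$. Then $\widetilde{\theta}_\phi(xy') = \theta(x)\theta_\phi'(y')$ follows since $\Ad(u_\theta)$ is multiplicative. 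Normality is automatic for $\Ad$ of a unitary.

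I expect the main obstacle to be step (i)–(ii): precisely showing that $e_\theta$ commutes with the relative commutant $\CC = M \cap \theta(M)'$. The point is that for $c \in \CC \subseteq \FN_\phi$-densely, one should have $e_\theta c_\phi$ related to $u_\theta(\text{something})$; since $c \in M$ but generally $c \notin \theta(M)$, the vector $c_\phi$ need not lie in $e_\theta \CH_\phi$. The right statement is rather that $e_\theta$ commutes with $c$ as an operator: indeed $e_\theta = u_\theta u_\theta^*$ and for $c\in\CC$, $u_\theta^* c u_\theta$ should be checked to equal $c' $ for a suitable element, or one shows directly $c e_\theta = e_\theta c e_\theta = e_\theta c$ using that $c$ commutes with $\theta(M) \supseteq \theta(M)e_\theta$ and the structure of $e_\theta$ as a conditional-expectation projection (via Theorem~\ref{equi}, $E(x) e_\theta = e_\theta x e_\theta$, so $c e_\theta = E(c) e_\theta$ wait that gives $ce_\theta = e_\theta c e_\theta$, and taking adjoints $e_\theta c = e_\theta c e_\theta$, hence $c e_\theta = e_\theta c$). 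Good — so (i) follows from Theorem~\ref{equi} applied to the $\phi$-preserving conditional expectation $E: M \to \theta(M)$ that equimodularity provides, noting $E(c) = c$ would be false, but the identity $E(c)e_\theta = e_\theta c e_\theta$ holds regardless and its adjoint gives the commutation. Once $e_\theta \in \CC'$ and $e_\theta \in \theta(M)'$ are in hand, $(\theta(M)\cup\CC)'' = M$ gives $e_\theta \in M'$, and factoriality finishes it. The remaining verifications are routine.
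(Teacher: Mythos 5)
Your strategy hinges on showing that $e_\theta = u_\theta u_\theta^* = 1$, so that $u_\theta$ becomes a unitary and $\widetilde{\theta}_\phi = \mathrm{Ad}(u_\theta)$. This cannot work, because $e_\theta = 1$ together with equimodularity forces $\theta$ to be \emph{surjective}: if $u_\theta$ is unitary and commutes with $J_\phi$, then
$\theta(M)' = u_\theta M' u_\theta^* = u_\theta J_\phi M J_\phi u_\theta^* = J_\phi \theta(M) J_\phi \subseteq J_\phi M J_\phi = M' \subseteq \theta(M)'$,
whence $\theta(M)=M$. The theorem is precisely meant to cover proper endomorphisms (for instance the CCR flows with symbol $A = 1\otimes R$ are equimodular, satisfy the density hypothesis, and are modularly extendable without being automorphisms), so any argument concluding $e_\theta = 1$ must contain an error. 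The error is in your step (i): Theorem~\ref{equi} gives $e_\theta x e_\theta = E(x)e_\theta$, \emph{not} $x e_\theta = E(x)e_\theta$. For $c \in \CC := M\cap\theta(M)'$ the correct identity reads $e_\theta c e_\theta = \lambda(c)\, e_\theta$ with $\lambda(c)\in\C$, i.e.\ the \emph{compression} of $c$ to $e_\theta\CH_\phi$ is a scalar; it does not yield $c e_\theta = e_\theta c$. (Already for $M = M_2\otimes M_2$ in standard form with $\theta(x)=1\otimes\gamma(x)$, the relative commutant $M_2\otimes 1$ fails to commute with the projection onto $\overline{(1\otimes M_2)\Omega}$.) A secondary problem: even granting $e_\theta\in M'$, the inference to $e_\theta\in Z(M)=\C 1$ is a non sequitur, since that would additionally require $e_\theta\in M$, which you do not establish and which is false for a genuine Jones-type projection.

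Ironically, the true fact you derived along the way --- that $e_\theta c e_\theta$ is a scalar multiple of $e_\theta$ for $c\in\CC$, because $E(c)\in \theta(M)\cap\theta(M)'=\C 1$ --- is exactly the seed of the paper's proof. There one turns $\CC$ into a Hilbert space $\CK_E$ via $E(y^*x)=\langle x,y\rangle_E\cdot 1$, constructs a unitary $W:\CK_E\otimes\CH_\phi\to\CH_\phi$ determined by $W(x\otimes a_\phi)=(x\theta(a))_\phi$ (surjectivity of $W$ is where the hypothesis $(\theta(M)\cup\CC)''=M$ enters), and defines $\widetilde{\theta}_\phi = W(1\otimes\,\cdot\,)W^*$. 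The resulting extension has multiplicity $\dim\CK_E$, which equals $1$ only when $\CC=\C1$ and $\theta$ is an automorphism; your ansatz $\mathrm{Ad}(u_\theta)$ amounts to forcing that multiplicity to be $1$, which is why it cannot reach the intended generality.
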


\begin{proof}
Let $N= \theta(M)$, which is a factor since $M$ is a factor and $\theta$
is normal. Since $\theta$ is equimodular, by 
Theorem~\ref{equi} there exists a unique $\phi$-preserving 
normal conditional expectation $E: M \rightarrow N$.
Suppose that $(N \cup (M \cap N')) '' = M$. We will show that $\theta$ has a (unique) modular extension.
For $x\in M\cap N^\prime$ and $a\in N$
we observe that 
\[ 
E(x)a = E(xa) = E(ax)= aE(x).
\] 
So for $x \in M\cap N^\prime$, we have $E(x) \in N \cap N^{\prime}$. Thus, since $N$ is a factor, $E(x)$ is a scalar multiple of the identity. Now if $x,y \in M\cap N^\prime$, then 
there exists a unique scalar $\langle x, y \rangle_E \in \C$
such that
we define $E(y^*x) = \langle x, y \rangle_E \cdot 1$ (here $1$ is the identity element of $M$ as well as $N$ and $M\cap N^\prime$) 
and we check that $\langle\cdot , \cdot\rangle_E$ defines a 
inner product on $M\cap N^\prime$. 
Notice that  $\langle\cdot , \cdot\rangle_E$ is clearly an 
inner product on $M\cap N'$ since $E$ is a faithful conditional
expectation. 

Let $\CK_{E}$ be the Hilbert space obtained by completion of $M\cap N^\prime$  with respect to $\langle \cdot, \cdot \rangle_E$.
Now we notice that there exists a unitary  $W: \CK_E \otimes \CH_{\phi} \rightarrow \CH_{\phi}$ satisfying 
\[
W(x\otimes a_\phi) =  (x \theta(a))_\phi, \qquad x\in M\cap N' , a \in \FN_\phi \cap \FN_\phi^*
\]
Indeed, if $x,y \in M\cap N^\prime$ and $a,b \in \FN_\phi \cap \FN_\phi^*$,  we have that
\begin{align*}
\langle (x \theta(a))_\phi,   (y \theta(b))_\phi \rangle & =
\phi(\theta(b)^* y^* x \theta(a) ) 
= \phi(E( \theta(b)^* y^* x \theta(a) )) \\
& = \phi( \theta(b)^* E(y^* x) \theta(a) ) 
= \langle x, y \rangle_E \cdot \phi( \theta(b)^*  \theta(a) )  \\
& = \langle x, y \rangle_E \cdot \phi( \theta(b^*a) )
= \langle x, y \rangle_E \cdot \phi( b^*a ) \\
& = \langle x, y \rangle_E \cdot 
\langle a_\phi, b_\phi \rangle
\end{align*}
therefore $W$ is a well-defined isometry. Moreover, $W$ is
surjective since $(N^\prime \cap M)\cup N)'' = M$, and hence
the elements of the form $x \theta(a)$ for $x \in M\cap N'$ and
$a \in \FN_\phi\cap \FN_\phi^*$ are weakly dense in $M$. We notice
that $W$ also satisfies the property that
\[
W( x\otimes \xi) = x u_\theta \xi, \qquad x\in M\cap N', \xi \in \CH_\phi
\]

We now consider the unital endomorphism $\alpha$ of $B(\CH_\phi)$ given by
\[
\alpha(x) = W (1 \otimes x )W^*, \qquad x\in B(\CH_\phi).
\]
We claim that $\alpha$ is the modular extension of $\theta$. Indeed,
given  $x \in M$, we have that for all $ y\in N^\prime \cap M , a \in \FN_\phi \cap \FN_\phi^*$, 
 \begin{align*}
\alpha(x) W y \otimes a_\phi 
& = W (1 \otimes x) (y \otimes a_\phi)
 = W y \otimes (xa)_\phi =  (y\theta(xa))_\phi \\
 & = (\theta(x) y \theta(a))_\phi
 = \theta(x) W ( y \otimes a_\phi )
 \end{align*}
hence $\alpha=\theta$ on $M$. Similarly, by equimodularity
 \begin{align*}
\alpha(J_\phi x J_\phi) W y \otimes a_\phi 
& = W (1 \otimes J_\phi xJ_\phi) (y \otimes a_\phi)
 = W y \otimes J_\phi xJ_\phi a_\phi \\
 & =  y u_\theta J_\phi xJ_\phi a_\phi 
=  y  J_\phi u_\theta xJ_\phi a_\phi 
=  y  J_\phi  \theta(x) u_\theta J_\phi a_\phi \\
& =  y  J_\phi  \theta(x) J_\phi u_\theta  a_\phi 
=  y  J_\phi  \theta(x) J_\phi (\theta(a))_\phi \\
& =   J_\phi  \theta(x) J_\phi (y \theta(a))_\phi 
= J_\phi  \theta(x) J_\phi W (y\otimes a_\phi)
 \end{align*}
hence $\alpha(J_\phi x J_\phi) = J \theta(x) J$ for $x\in M$. 
Hence $\alpha$ is the modular extension of $\theta$. 
\end{proof}

\begin{Remark}\label{stateext}
It is unclear to us whether the converse is true in the case
of \fns weights. It certainly holds in the case of faithful normal 
\emph{states}, by \cite[Corollary 3.7]{BISS}. 
\end{Remark}

\begin{Remark} \label{explicit-ext}
Let $(M, \phi)$ be a noncommutative measure space and let $\theta$
be an equimodular unital endomorphism of $M$. 
In the particular case when $\phi$ is a faithful normal state, the
modular extension $\alpha$ of $\theta$ guaranteed by Theorem~\ref{extcon} can be made very explicit. 
Recall that in the proof we exhibit
$W: \CK_E \otimes \CH_\phi \to \CH_\phi$ such that 
$\alpha(x) = W (1 \otimes x) W^*$ for all $x \in \CB(\CH_\phi)$
(here we are using the same notation). 
When $\phi$ is a faithful normal state, the GNS representation
on $\CH_\phi$ has a cyclic separating vector $\Omega$ (corresponding
to the identity). Furthermore, the space $\CK_E$ can be identified
explicitly with a subspace of $\CH_\phi$ via the isometry 
$\CK_E \to \CH_\phi$ given by $x \mapsto x\Omega$, for 
$x \in M\cap \theta(M)'$. Hence using this identification, we have that 
$W: \CK_E \otimes \CH_\phi \to \CH_\phi$ is given by
\[
W(x\Omega \otimes y\Omega) = x \theta(y) \Omega, \qquad x \in M\cap \theta(M)' , y \in M
\]
\end{Remark}

\section{E$_0$-semigroups on factors}\label{sec:E-semigroups}

In this section we study modular extendability for 
$E_0$-semigroups on arbitrary factors. This leads
to a classification scheme for $E_0$-semigroups based
on the well-established classification of E$_0$-semigroups
on type I$_\infty$ factors due to Powers and Arveson, as well
as several cocycle-conjugacy invariants.

We note that the study of modular extendability
with respect to faithful states, as opposed to \fns weights, 
was the main focus of \cite{BISS}.

Recall that an \emph{E$_0$-semigroup} on a W*-algebra
$M$ is a family $\alpha = \{\alpha_t: t \geq 0\}$
of  normal unital *-homomorphism of $M$ such that
$\alpha_0 = id_M$ and $\alpha_s \circ \alpha_t = \alpha_{s+t}$ for
all $t,s \geq 0$ which is weak*-continuous, i.e. for every
$\rho \in M_*$ and $x\in M$, the map $[0,\infty) \ni t \mapsto \rho(\alpha_t(x))$ is continuous.

We will be interested in the classification of $E_0$-semigroups
of a von Neumann algebra $M$ with respect to \emph{cocycle 
conjugacy}, which we review presently. 

Let $M$ be a von Neumann algebra. Given an E$_0$-semigroup $\alpha$ 
on $M$, a strongly continuous family of unitary 
$ U = \{ U_t : t \geq 0 \}$ in $M$ will be called an $\alpha$-cocycle
if $U_{s+t} = U_t \alpha_t(U_s)$, for all $s,t \geq 0$. Notice that
for an $\alpha$-cocycle $U$ we automatically have $U_0 = I$. 
An E$_0$-semigroup $\beta$ on $M$ is said to be \emph{conjugate}
to $\alpha$ if there exists  an automorphism $\gamma \in \mathrm{Aut}\, M$ such that 
\[
\gamma \circ \beta_t \circ \gamma^{-1} = \alpha_t, \qquad
t\geq 0.
\]
We will say that $\beta$ is  \emph{cocycle equivalent} to $\alpha$ if there exists an 
$\alpha$-cocycle $U$ such that
\[ 
\beta_t(x) = U_t \alpha_t(x) U_t^* , \qquad  t\geq 0, x\in M
\]
Finally, we will say that $\beta$ is \emph{cocycle conjugate} to $\alpha$
if there exists an E$_0$-semigroup $\beta'$ of $M$ which is conjugate
to $\beta$ such that $\beta'$ is cocycle equivalent to $\alpha$.

We now bring the concepts of the last sections concerning 
extendability to the context of E$_0$-semigroups. 

\begin{Definition}
Let $M$ be a von Neumann algebra. An E$_0$-semigroup 
$\alpha$ of $M$ will be called \emph{extendable} if there exists
a normal unital nondegenerate representation $\pi:M \to \CB(\CH)$ 
and E$_0$-semigroup $\widetilde{\alpha}$ on $\CB(\CH)$ such that
$$
\widetilde{\alpha}_t(\pi(x)) = \pi(\alpha_t(x)), \qquad \forall x\in M, t\geq 0.
$$
\end{Definition}

This definition is very natural, however \emph{all} E$_0$-semigroups
on von Neumann algebras with \emph{separable predual} are extendable
by a result of Arveson and Kishimoto~\cite{arv-kishimoto}. In fact,
their result states that we can find a pair $(\pi, \widetilde{\alpha})$
as above so that $\widetilde{\alpha}$ is a semigroup of automorphisms. 
We will focus
on a more restrictive concept of extendability as introduced in 
Section~\ref{sec:extendibility}.
 
\begin{Definition}
An $E_0$-semigroup $\alpha$ on a factor $M$ is said to \emph{modularly extendable} if
$\alpha_t$ is modularly extendable for every $t\geq 0$.
\end{Definition}

We note that by Theorem~\ref{inv}, modular extendibility of 
endomophisms on $M$ can be checked with respect to \emph{any} 
\fns weight on $M$.

\begin{Definition}
An $E_0$-semigroup $\alpha = \{\alpha_t: t \geq 0\}$ on a factorial noncommutative measure space $(M, \phi)$ is said to be 
\emph{equimodular} if every $t\geq 0$,  $\alpha_t$ is an equimodular endomorphism with respect to $\phi$.
\end{Definition}
 
The following theorem is an immediate consequence of Theorem~\ref{extcon} and  \cite[Corollary 3.7]{BISS} to
the context of E$_0$-semigroups.
 
\begin{Theorem}\label{E0-extcon}
Let $(M, \phi)$ be a noncommutative factorial measure space and let 
$\alpha$ be an E$_0$-semigroup on $M$ which is equimodular. If 
\[
(\alpha_t(M) \cup (M \cap \alpha_t(M)'  )) '' = M, \qquad \forall t\geq 0
\]
then $\alpha$ is modularly extendable. Furthermore, when $\phi$ is
a state, the converse holds.
\end{Theorem}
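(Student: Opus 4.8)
The plan is to reduce Theorem~\ref{E0-extcon} to the single-endomorphism results already established, namely Theorem~\ref{extcon} and \cite[Corollary 3.7]{BISS}, applied uniformly in the parameter $t$. First I would observe that the hypotheses are phrased ``for every $t\geq 0$'', so the forward implication is essentially immediate: fix $t\geq 0$; since $\alpha$ is equimodular, $\alpha_t$ is an equimodular unital normal endomorphism of $M$, and by assumption $(\alpha_t(M)\cup(M\cap\alpha_t(M)'))''=M$, so Theorem~\ref{extcon} applies and produces the $\phi$-modular extension $\widetilde{(\alpha_t)}_\phi$ of $\alpha_t$ on $\CB(\CH_\phi)$. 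Thus $\alpha_t$ is modularly extendable for every $t\geq 0$, which is precisely the definition of $\alpha$ being modularly extendable. For the converse, when $\phi$ is a state, one fixes $t\geq 0$ and applies the converse part of \cite[Corollary 3.7]{BISS}: modular extendability of $\alpha_t$ together with equimodularity forces $(\alpha_t(M)\cup(M\cap\alpha_t(M)'))''=M$, and since this holds for each $t$, the stated condition follows.

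The one genuine point requiring care — and the step I expect to be the main obstacle — is verifying that the family $\{\widetilde{(\alpha_t)}_\phi : t\geq 0\}$ is itself an E$_0$-semigroup on $\CB(\CH_\phi)$, i.e.\ that it is multiplicative in $t$, satisfies $\widetilde{(\alpha_0)}_\phi = \mathrm{id}$, and is weak*-continuous. Strictly speaking the definition of ``modularly extendable E$_0$-semigroup'' only asks that each $\alpha_t$ be modularly extendable, so this is not logically necessary for the theorem as stated; but it is the natural companion fact and worth recording. Multiplicativity follows from uniqueness of the modular extension: $\widetilde{(\alpha_s)}_\phi \circ \widetilde{(\alpha_t)}_\phi$ is a normal endomorphism of $\CB(\CH_\phi)$ which on $M$ equals $\alpha_s\circ\alpha_t = \alpha_{s+t}$ and on $M'$ equals $(\alpha_s)'_\phi\circ(\alpha_t)'_\phi = (\alpha_{s+t})'_\phi$ (using that $J_\phi$ conjugates the two and that $(\cdot)'_\phi$ is itself multiplicative), so by uniqueness it coincides with $\widetilde{(\alpha_{s+t})}_\phi$; and $\widetilde{(\alpha_0)}_\phi$ extends $\mathrm{id}_M$ on both $M$ and $M'$, hence is $\mathrm{id}$ on $\CB(\CH_\phi) = (M\cup M')''$.

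For weak*-continuity of $t\mapsto\widetilde{(\alpha_t)}_\phi$, the cleanest route is to use the explicit formula for the extension from the proof of Theorem~\ref{extcon}: there the extension of $\alpha_t$ is $\mathrm{Ad}(W_t)\circ(1\otimes\,\cdot\,)$ for a unitary $W_t:\CK_{E_t}\otimes\CH_\phi\to\CH_\phi$ determined by $W_t(x\otimes a_\phi) = (x\,\alpha_t(a))_\phi$, equivalently $W_t(x\otimes\xi) = x\,u_{\alpha_t}\xi$ for $x\in M\cap\alpha_t(M)'$. One checks that for vectors of the form $\xi = a_\phi$ and $\eta = b_\phi$ with $a,b\in\FN_\phi\cap\FN_\phi^*$ and $x,y\in M$, the matrix coefficient $\langle\widetilde{(\alpha_t)}_\phi(x)\,\xi,\eta\rangle$ can be rewritten (via $W_t$, or directly via $u_{\alpha_t}$) in terms of $\phi$ applied to expressions involving $\alpha_t$, and these depend continuously on $t$ by weak*-continuity of $\alpha$ and normality of $\phi$; since such vectors are dense and the operators $\widetilde{(\alpha_t)}_\phi(x)$ are uniformly bounded by $\|x\|$, this gives weak operator (hence weak*) continuity. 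With these verifications in hand, the two implications of the theorem follow by the $t$-by-$t$ arguments above, and one concludes by noting that Theorem~\ref{inv} guarantees the result is independent of the choice of \fns weight $\phi$.
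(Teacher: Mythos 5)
Your proof is correct and takes essentially the same route as the paper, which simply declares the theorem an immediate consequence of Theorem~\ref{extcon} and \cite[Corollary 3.7]{BISS} applied to each $\alpha_t$ separately. The additional verification that the extensions $\widetilde{(\alpha_t)}_\phi$ assemble into an E$_0$-semigroup on $\CB(\CH_\phi)$ is a worthwhile bonus, but as you correctly note it is not required by the paper's definition of a modularly extendable E$_0$-semigroup.
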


\subsection{Type classification of E$_0$-semigroups}
We are naturally led to the following classification scheme.

\begin{Definition}
Let $M$ be a factor, and let $\alpha$ be an E$_0$-semigroup
on $M$. If $\alpha$
has modular extension $\widetilde{\alpha}$ on some $\CB(\CH_\phi)$ 
for some \fns weight $\phi$ (and hence all \fns weights
by Theorem~\ref{inv}) , we will say
that $\alpha$ has type EI, EII or EIII, respectively if
$\widetilde{\alpha}$ has type I, II or III, respectively, in
the sense of Arveson and Powers. Otherwise, we will simply say 
that $\alpha$ is not modularly extendable.
\end{Definition}

\begin{Remark}\label{einv}
Notice that the type of the modular extension does not depend on the choice of \fns weight. Indeed, let $M$ a factor and let $\alpha = \{\alpha_t: t \geq 0\}$ be
an extendable  $E_0$-semigroup. Suppose that $\phi$ and   $\psi$ are 
two \fns 
weights on $M$ with corresponding modular extensions  $\widetilde{\alpha}_\phi$ and $\widetilde{\alpha}_\psi$ 
on $\CB(\CH_\phi)$ and $\CB(\CH_\psi)$ respectively. 
By Theorem~\ref{inv}, replacing $\theta$ by $\alpha_t$  for all 
$t \geq 0$, and choosing a particular unitary 
 $u : \CH_\phi \rightarrow \CH_\psi $
satisfying 
\eqref{eq:J-covariant-u}
we obtain
 $$ \text{Ad}(u)\circ (\widetilde{\alpha_t})_{\phi} \circ\text{Ad}(u^*)= (\widetilde{\alpha_t})_{\psi}, \qquad \forall t\geq 0,
 $$
i.e $\widetilde{\alpha}_{\phi}$ and $\widetilde{\alpha}_{\psi}$ are conjugate and hence possess the same type.
\end{Remark}

Now we observe that the type of an $E_0$-semigroup on a factor $M$ is a 
cocycle conjugacy invariant. 

\begin{Proposition}\label{prop:cocycle-conjugacy}
Let $M$ be a factor and let $\alpha$ and $\beta$ be cocycle
conjugate E$_0$-semigroups on $M$. Suppose that $\alpha$ is modularly
extendable. Then $\beta$ is modularly extendable, and moreover, the
modular extensions of $\alpha$ and $\beta$ are cocycle conjugate.
Therefore, the type of an $E_0$-semigroup on $M$ is a 
cocycle conjugacy invariant. 
\end{Proposition}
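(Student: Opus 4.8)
The plan is to reduce the statement to the two basic operations that make up cocycle conjugacy -- conjugation by an automorphism and perturbation by a cocycle -- and to handle each separately, then combine. Since $\beta$ is cocycle conjugate to $\alpha$, there is an E$_0$-semigroup $\beta'$ on $M$ conjugate to $\beta$ which is cocycle equivalent to $\alpha$; it suffices to treat $\beta'$ (conjugacy is already handled, see below) and then transport back. So first I would show: if $\alpha$ is modularly extendable and $\beta'$ is cocycle equivalent to $\alpha$, say $\beta'_t(x) = U_t \alpha_t(x) U_t^*$ for an $\alpha$-cocycle $U = \{U_t\}$ in $M$, then $\beta'$ is modularly extendable with modular extension cocycle equivalent to $\widetilde{\alpha}$.

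For the cocycle-equivalent case, fix a \fns weight $\phi$ and work in $\CB(\CH_\phi)$ with $M$ identified with $\pi_\phi(M)$. Each $U_t \in M$ is unitary, so $J_\phi U_t J_\phi \in M'$ is unitary, and I would set $\widetilde{U}_t = U_t \, (J_\phi U_t J_\phi) \in \CB(\CH_\phi)$. The first key step is to verify that $\widetilde{U}$ is an $\widetilde{\alpha}$-cocycle, i.e. $\widetilde{U}_{s+t} = \widetilde{U}_t \, \widetilde{\alpha}_t(\widetilde{U}_s)$, using: the cocycle identity $U_{s+t} = U_t \alpha_t(U_s)$ for $U$; the corresponding identity for the commutant cocycle $J_\phi U_t J_\phi$, which follows by applying $J_\phi (\cdot) J_\phi$ and noting $(\alpha_t)'_\phi(J_\phi x J_\phi) = J_\phi \alpha_t(x) J_\phi$ satisfies the analogous relation (this uses that $J_\phi U_t J_\phi$ is an $(\alpha_t)'_\phi$-cocycle); and the defining property $\widetilde{\alpha}_t(xy') = \alpha_t(x)(\alpha_t)'_\phi(y')$ of the modular extension applied to $x=U_s$, $y' = J_\phi U_s J_\phi$, giving $\widetilde{\alpha}_t(\widetilde{U}_s) = \alpha_t(U_s)\, J_\phi \alpha_t(U_s) J_\phi$. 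Strong continuity of $\widetilde{U}$ is inherited from that of $U$ (and of $t \mapsto J_\phi U_t J_\phi$, since $J_\phi$ is an isometry). Then I would define $\widetilde{\beta'}_t(T) = \widetilde{U}_t \, \widetilde{\alpha}_t(T) \, \widetilde{U}_t^*$ on $\CB(\CH_\phi)$; this is an E$_0$-semigroup by the cocycle property of $\widetilde{U}$, and on $M$ it restricts to $T = x \mapsto \widetilde{U}_t \alpha_t(x) \widetilde{U}_t^* = U_t \alpha_t(x) U_t^* = \beta'_t(x)$, since $J_\phi U_t J_\phi$ commutes with $\alpha_t(x) \in M$. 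To conclude $\widetilde{\beta'}_t$ is the \emph{modular} extension of $\beta'_t$ I must check its action on $M'$: for $y' \in M'$, $\widetilde{\beta'}_t(y') = \widetilde{U}_t \alpha_t(J_\phi (J_\phi y' J_\phi) J_\phi)'_\phi \widetilde{U}_t^*$, and a short computation using $U_t \in M$ commuting with $M'$ shows this equals $J_\phi \beta'_t(J_\phi y' J_\phi) J_\phi = (\beta'_t)'_\phi(y')$, as required by the definition of modular extension. Hence $\widetilde{\beta'} = \widetilde{U}\,\widetilde{\alpha}\,\widetilde{U}^*$ is cocycle equivalent to $\widetilde{\alpha}$.

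For the conjugacy step, suppose $\gamma \in \mathrm{Aut}\,M$ and $\beta_t = \gamma \circ \beta'_t \circ \gamma^{-1}$; by Remark~\ref{rem:automorphism-extendable} choose a unitary $u$ on $\CH_\phi$ modularly implementing $\gamma$, i.e. $J_\phi u = u J_\phi$ and $\pi_\phi(\gamma(x)) = u\pi_\phi(x)u^*$. By Theorem~\ref{thm:conjugacy} applied to each $\beta'_t$, we get that each $\beta_t = (\beta'_t)^\gamma$ is modularly extendable with $\widetilde{\beta_t}_\phi = \mathrm{Ad}(u)\,\widetilde{\beta'_t}_\phi\,\mathrm{Ad}(u^*)$; since this holds with the \emph{same} $u$ for all $t$, the family $\{\widetilde{\beta_t}_\phi\}_{t\geq 0} = \mathrm{Ad}(u) \circ \widetilde{\beta'}_\phi \circ \mathrm{Ad}(u^{-1})$ is an E$_0$-semigroup conjugate to $\widetilde{\beta'}_\phi$. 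Composing the two steps: $\widetilde{\beta}_\phi$ is conjugate to $\widetilde{\beta'}_\phi$, which is cocycle equivalent to $\widetilde{\alpha}_\phi$; hence $\widetilde{\beta}_\phi$ is cocycle conjugate to $\widetilde{\alpha}_\phi$. Finally, cocycle conjugacy of E$_0$-semigroups on type I factors preserves type (this is classical, see \cite{Arv}), and by Remark~\ref{einv} the type is independent of the choice of weight, so $\alpha$ and $\beta$ have the same type. I expect the main obstacle to be the careful bookkeeping in verifying that $\widetilde{U}$ is genuinely an $\widetilde{\alpha}$-cocycle and that $\widetilde{\beta'}$ acts correctly on $M'$ -- everything reduces to the intertwining identity defining the modular extension together with the fact that $M$ and $M'$ commute, but one must be attentive to where $J_\phi$ is inserted and to the distinction between the cocycle $U$ in $M$ and its mirror image $J_\phi U J_\phi$ in $M'$; strong continuity and the semigroup law are then routine.
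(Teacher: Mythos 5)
Your proposal is correct and follows essentially the same route as the paper: the paper's proof also sets $V_t = U_t J_\phi U_t J_\phi$ (your $\widetilde{U}_t$), checks it is an $\widetilde{\alpha}$-cocycle, and handles the conjugacy part via a unitary modularly implementing $\gamma$ as in Remark~\ref{rem:automorphism-extendable}. The only difference is that you spell out the verifications the paper labels ``straightforward to check,'' and your computations there are accurate.
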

\begin{proof} Suppose that $\gamma$ is an automorphism of $M$
and $(U_t)_{t\geq 0}$ is an $\alpha$-cocycle such that
\[
\gamma(\beta_t(\gamma^{-1}(x))) = U_t\alpha_t(x)U_t^*, \qquad \forall x\ in M, \forall t\geq 0.
\]
By Remark~\ref{rem:automorphism-extendable}, there exists a unitary
$W:\CH_\phi \to \CH_\phi$ which modularly implements $\gamma$ with
respect to $\phi$. Let $\wgamma = \mathrm{Ad}(W)$. Let 
$\walpha_t$ be the modular extension of $\alpha_t$
with respect to $\phi$, and let 
$V_t = U_t J U_t J$, for every $t \geq 0$. Then 
it is straightforward to check that $(V_t)_{t\geq0}$ is an $\walpha$-cocycle, and moreover the E$_0$-semigroup of $\CB(\CH_\phi)$
defined by
$$
x \quad \mapsto \quad \wgamma^{-1}(V_t \walpha_t(\wgamma(x) V_t^* )    )
$$
is the modular extension of $\beta$ and it is clearly cocycle 
conjugate to $\walpha$.
\end{proof}

\begin{Remark}\label{rem:classification-in-BH}
We note that when $M$ is a type I$_\infty$ factor, i.e. $M$ is isomorphic to $\CB(\CH)$ for some separable Hilbert space $\CH$,
the type of E$_0$-semigroups generalizes the classification of
Arveson and Powers. More precisely, if $\alpha$ is an E$_0$-semigroup
of $\CB(\CH)$, then
\begin{enumerate}
\item $\alpha$ is modularly extendable.
\item $\alpha$ has type EI, EII or EIII, respectively, 
if and only if $\alpha$ has type I, II or III, respectively, in
the sense of Arveson and Powers.
\end{enumerate}
Indeed, let $\CH$ be a separable Hilbert space and let $\alpha$
be an E$_0$-semigroup on $\CB(\CH)$. Let $(e_n)_{n\geq 1}$ be
an orthonormal basis for $\CH$ and let $(\lambda_n)_{\geq 0}$ be
a sequence nonzero positive real numbers such that
\[
\sum_{n=1}^\infty \lambda_n^2 = 1
\]
Then we obtain a faithful normal state on $\CB(\CH)$ defined by
$$
\phi(x) = \sum_{n=1}^\infty \lambda_n^2 \langle x e_n, e_n \rangle
$$
We will show that $\alpha$ is modularly extendable with respect to
$\phi$. 

Let $Q: \CH \to \CH$ be the conjugate linear self-adjoint unitary given by $Qe_n = e_n$ for all $n$. Consider the vector 
$$
\Omega = \sum_{n=1}^\infty \lambda_n e_n \otimes e_n
$$
and let $\pi: \CB(\CH) \to \CB(\CH \otimes \CH)$ be the normal 
representation given by $\pi(x) = x \otimes 1$. It is easy to check
that $(\CH \otimes \CH, \pi, \Omega)$ is a GNS tripe for $\phi$.
Furthermore, let $\Delta$ and $J$ be the modular operator and modular conjugation operators, which have $\pi(M)\Omega$ as a core. This core
contains
all vectors of the form $e_m \otimes e_n$ for $m,n \geq 1$ and it is straightforward to check that
\begin{align*}
\Delta^\half (e_m \otimes e_n) & = \frac{\lambda_m}{\lambda_n}\, 
e_m\otimes e_n, \qquad m,n\geq 1 \\
J( \xi \otimes \eta ) & = Q\eta \otimes Q\xi, \qquad \xi,\eta \in \CH
\end{align*}
Let $\beta$ be the E$_0$-semigroup of $\CB(\CH)$ given by 
$\beta_t(x) = Q\alpha_t(QxQ)Q$ for $t\geq 0$. It is easy to check
that $\alpha \otimes \beta$ on $\CB(\CH\otimes \CH)$ is the modular
extension of $\alpha$. Furthermore, if $E^\alpha$ and $E^\beta$ are
the product systems associated to $\alpha$ and $\beta$, respectively,
then the map $\Theta: E^\alpha \to E^\beta$ given by $\Theta(T) = QTQ$
for every $T \in E^\alpha(t)$ is a conjugate linear isomorphism of
product systems. Therefore, they share the same type and index. 
In particular, we obtain that $\alpha$ and its modular extension 
have the same type, and the index is doubled.
\end{Remark}

In Section~\ref{sec:examples} we will give more concrete examples on factors of types II$_1$, II$_\infty$ and III. 
We note that automorphism groups are modularly extendable, hence we always have trivial examples of E$_0$-semigroups of type EI on every factor.
At present it is still unclear to us whether any II$_1$ 
factor has nontrivial E$_0$-semigroups of types EI and EII (of course type EIII is impossible in this case). In the properly infinite case, we can say a bit more.

\begin{Theorem}
Suppose that $\alpha$ and $\beta$ are E$_0$-semigroups on factors
$M$ and $N$, respectively. Then 
$\alpha \otimes \beta$ is modularly extendable if and only if 
$\alpha$ and $\beta$ are both modularly extendable. Furthermore,
if $\phi_1$ and $\phi_2$ are faithful normal states on $M$ and $N$ with associated modular extensions $\tilde{\alpha}$ and $\tilde{\beta}$, respectively, then $\tilde{\alpha}\otimes \tilde{\beta}$ is a modular extension with respect to the $\phi_1 \otimes \phi_2$.
\end{Theorem}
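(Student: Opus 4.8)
The plan is to reduce everything to a statement about a single pair of endomorphisms, using the following reformulation of the Definition: a normal endomorphism $\Psi$ of $\CB(\CH_\phi)$ is the $\phi$-modular extension of $\theta$ if and only if $\Psi|_M=\theta$ and $\Psi|_{M'}=\theta'_\phi$ (necessity follows by setting $y'=1$, resp.\ $x=1$, in \eqref{ext}; conversely, multiplicativity of $\Psi$ recovers \eqref{ext}, and such a $\Psi$ is automatically unital). Since $\alpha\otimes\beta$ is modularly extendable precisely when each $(\alpha\otimes\beta)_t=\alpha_t\otimes\beta_t$ is, and modular extendability of an endomorphism on a factor does not depend on the \fns weight chosen (Theorem~\ref{inv}), it suffices to fix \fns weights $\phi_1$ on $M$ and $\phi_2$ on $N$ and normal unital endomorphisms $\theta_1$ of $M$ and $\theta_2$ of $N$, and to prove that $\theta_1\otimes\theta_2$ is $\phi$-modularly extendable ($\phi:=\phi_1\otimes\phi_2$) if and only if $\theta_1$ is $\phi_1$-modularly extendable and $\theta_2$ is $\phi_2$-modularly extendable.

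Next I would record the standard behaviour of modular data under tensor products: $\phi=\phi_1\otimes\phi_2$ is a \fns weight on $M\otimes N$ with GNS space $\CH_{\phi_1}\otimes\CH_{\phi_2}$ and $J_\phi=J_{\phi_1}\otimes J_{\phi_2}$, and $(M\otimes N)'=M'\otimes N'$. A one-line computation on the generators $y'\otimes w'$ of $M'\otimes N'$, using that $J_{\phi_i}(\cdot)J_{\phi_i}$ interchanges a factor with its commutant, then gives
\[
(\theta_1\otimes\theta_2)'_\phi=(\theta_1)'_{\phi_1}\otimes(\theta_2)'_{\phi_2}.
\]

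Both implications now follow quickly. For the forward direction, let $\Psi_i$ be the $\phi_i$-modular extension of $\theta_i$ and set $\Psi:=\Psi_1\otimes\Psi_2$, a normal unital endomorphism of $\CB(\CH_{\phi_1})\otimes\CB(\CH_{\phi_2})=\CB(\CH_\phi)$. On generators $\Psi(x\otimes z)=\theta_1(x)\otimes\theta_2(z)$ and $\Psi(y'\otimes w')=(\theta_1)'_{\phi_1}(y')\otimes(\theta_2)'_{\phi_2}(w')$, so by normality $\Psi|_{M\otimes N}=\theta_1\otimes\theta_2$ and $\Psi|_{(M\otimes N)'}=(\theta_1\otimes\theta_2)'_\phi$; by the reformulation $\Psi$ is the $\phi$-modular extension of $\theta_1\otimes\theta_2$. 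Applying this to each $t\geq 0$ shows $\alpha\otimes\beta$ is modularly extendable, and when $\phi_1,\phi_2$ are states it identifies $\{\widetilde\alpha_t\otimes\widetilde\beta_t\}_{t\geq 0}$ as the modular extension --- the semigroup law is inherited, and weak$^*$-continuity of $t\mapsto\widetilde\alpha_t\otimes\widetilde\beta_t$ follows from that of $\widetilde\alpha$ and $\widetilde\beta$ by density of finite sums of product functionals in $(M\otimes N)_*$ --- which is the final assertion of the theorem. For the converse, suppose $\Psi$ is the $\phi$-modular extension of $\theta_1\otimes\theta_2$. By the reformulation and the displayed formula, $\Psi(M\otimes 1)=\theta_1(M)\otimes 1\subseteq M\otimes 1$ and $\Psi(M'\otimes 1)=(\theta_1)'_{\phi_1}(M')\otimes 1\subseteq M'\otimes 1$. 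Because $M$ is a factor, $M\vee M'=\CB(\CH_{\phi_1})$, hence $(M\otimes 1)\vee(M'\otimes 1)=\CB(\CH_{\phi_1})\otimes 1$; since $\Psi$ is a normal $*$-homomorphism carrying each of these two generating subalgebras into itself, it carries $\CB(\CH_{\phi_1})\otimes 1$ into itself. Viewed as a normal unital endomorphism $\Psi_1$ of $\CB(\CH_{\phi_1})$, this restriction satisfies $\Psi_1|_M=\theta_1$ and $\Psi_1|_{M'}=(\theta_1)'_{\phi_1}$, so it is the $\phi_1$-modular extension of $\theta_1$; symmetrically $\theta_2$ is $\phi_2$-modularly extendable.

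The only step that is not purely formal is the stability of the type~I subfactor $\CB(\CH_{\phi_1})\otimes 1$ under $\Psi$ in the converse direction, and this is exactly where factoriality of $M$ is used: it is what forces $M$ and $M'$ together to generate all of $\CB(\CH_{\phi_1})$, and combined with the fact that a normal $*$-homomorphism maps $\overline{\CA}^{\,w}$ into $\overline{\Psi(\CA)}^{\,w}$ (for $\CA$ the $*$-algebra generated by $(M\otimes 1)\cup(M'\otimes 1)$) it closes the argument. Everything else is bookkeeping with the modular data of $\phi_1\otimes\phi_2$ and the Definition of the modular extension.
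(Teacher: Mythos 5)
Your proof is correct and rests on exactly the two facts the paper's (very terse) proof invokes: reduction to product states/weights via Theorem~\ref{inv}, and the identity $J_{\phi_1\otimes\phi_2}=J_{\phi_1}\otimes J_{\phi_2}$, which yields $(\theta_1\otimes\theta_2)'_{\phi}=(\theta_1)'_{\phi_1}\otimes(\theta_2)'_{\phi_2}$ and makes the forward direction a matter of checking $\widetilde{\theta_1}\otimes\widetilde{\theta_2}$ on generators. The one place where you go beyond the paper's sketch is the converse, where your argument that the modular extension of $\theta_1\otimes\theta_2$ preserves $\CB(\CH_{\phi_1})\otimes 1=(M\otimes 1)\vee(M'\otimes 1)$ (by factoriality, normality and Kaplansky density) and hence restricts to a modular extension of $\theta_1$ is a correct and genuinely needed step that the paper leaves entirely implicit.
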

\begin{proof}
The proof is an immediate application of two facts. Firstly, Theorem~\ref{inv} shows
that we may consider faithful normal states instead of \fns weights, and
allows us to choose product states to prove the modular extendability 
of $M \bar{\otimes} N$. Secondly, suppose that $\phi_1$ and $\phi_2$ are faithful normal sates on $M$ and $N$, and $J_1$ and $J_2$ are their modular conjugations on $L^2(M, \phi_1)$ and $L^2(N, \phi_2)$, respectively. Then $\phi_1 \otimes \phi_2$ is a faithful normal state
on $M\bar{\otimes} N$ and its associated modular conjugation is given by
$J_1 \otimes J_2$.
\end{proof}

\begin{Corollary}
Suppose that $M$ is a properly infinite factor. Then there exist E$_0$-semigroups of type EII and type EIII on $M$. 
\end{Corollary}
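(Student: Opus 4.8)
The plan is to reduce the properly infinite case to two sources of examples already available: the type $\mathrm{I}_\infty$ case described in Remark~\ref{rem:classification-in-BH}, and the tensor-splitting of a properly infinite factor. Recall that a properly infinite factor $M$ with separable predual is isomorphic to $M \bar\otimes \CB(\CH)$ for a separable infinite-dimensional Hilbert space $\CH$. So I would first fix such an isomorphism and work with $M \bar\otimes \CB(\CH)$ in place of $M$; since modular extendability and its type are cocycle conjugacy invariants (Proposition~\ref{prop:cocycle-conjugacy}), any conclusion transported back along this isomorphism is legitimate.

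Next I would pick, on the $\CB(\CH)$ tensor factor, two E$_0$-semigroups $\gamma^{II}$ and $\gamma^{III}$ which are of type $\mathrm{II}$ and type $\mathrm{III}$ respectively, in the sense of Arveson and Powers; such E$_0$-semigroups of $\CB(\CH)$ exist by the classical theory (e.g.\ Powers's examples of type $\mathrm{III}$ and Arveson's/Powers's examples of type $\mathrm{II}$). By Remark~\ref{rem:classification-in-BH}, each $\gamma^{\bullet}$ is modularly extendable with modular extension of the same type (type $\mathrm{II}$, resp.\ type $\mathrm{III}$), once we fix a suitable faithful normal state on $\CB(\CH)$. On the $M$ factor I would take the trivial E$_0$-semigroup, namely the identity semigroup $\iota_t = \mathrm{id}_M$ for all $t$, which is an automorphism group and hence modularly extendable by Remark~\ref{rem:automorphism-extendable}, with modular extension $\mathrm{Ad}(\text{something trivial})$, i.e.\ an automorphism group on $\CB(\CH_{\phi_M})$, which is of type $\mathrm{I}$ with index $0$.

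Now I would form $\alpha = \iota \otimes \gamma^{II}$ and $\beta = \iota \otimes \gamma^{III}$ on $M \bar\otimes \CB(\CH)$. By the preceding Theorem (on tensor products of E$_0$-semigroups on factors), $\alpha$ and $\beta$ are modularly extendable — since both tensor factors are — and, choosing a product state $\phi_M \otimes \phi_{\CB(\CH)}$, the modular extension of $\alpha$ is $\widetilde{\iota} \otimes \widetilde{\gamma^{II}}$ and likewise for $\beta$. The type of a tensor product of E$_0$-semigroups on type $\mathrm{I}_\infty$ factors follows the usual rules for product systems of Hilbert spaces: tensoring with an automorphism group (type $\mathrm{I}$, index $0$) does not change the type, so $\widetilde{\iota} \otimes \widetilde{\gamma^{II}}$ has type $\mathrm{II}$ and $\widetilde{\iota} \otimes \widetilde{\gamma^{III}}$ has type $\mathrm{III}$. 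Hence $\alpha$ has type EII and $\beta$ has type EIII on $M \bar\otimes \CB(\CH)$, and transporting back along the fixed isomorphism yields E$_0$-semigroups of type EII and type EIII on $M$.

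The main obstacle I anticipate is the bookkeeping of the type under tensoring on the $\mathrm{I}_\infty$ side: one must be sure that tensoring the modular extension $\widetilde{\iota}$ (an automorphism group, type $\mathrm{I}$, index $0$) with $\widetilde{\gamma^{\bullet}}$ really leaves the type unchanged. This is the standard fact that, for product systems of Hilbert spaces, $E \otimes F$ has type equal to $\max$ of the types when one factor is trivial (index-$0$ type $\mathrm{I}$), but it should be invoked carefully; alternatively, one can bypass it by noting that $\iota \otimes \gamma^{\bullet}$ is itself cocycle conjugate to an E$_0$-semigroup that is manifestly a ``spatial'' version of $\gamma^{\bullet}$ amplified to $M \bar\otimes \CB(\CH)$, and appeal directly to Remark~\ref{rem:classification-in-BH} together with the invariance of type. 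Everything else is routine given the Theorem on tensor products and Remark~\ref{rem:classification-in-BH}.
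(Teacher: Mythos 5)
Your proposal is correct and follows essentially the same route as the paper: identify $M$ with $M \bar\otimes \CB(\CH)$, tensor the identity E$_0$-semigroup on $M$ with a type II (resp.\ III) E$_0$-semigroup on $\CB(\CH)$, and invoke the tensor-product theorem together with Remark~\ref{rem:classification-in-BH}. The only cosmetic difference is in justifying that the type is unchanged: the paper notes that the modular extension $I \otimes \tilde\beta$ is cocycle conjugate to $\tilde\beta$ (an amplification), whereas you argue via the triviality of the identity's product system — both are valid and you mention the paper's route as your alternative.
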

\begin{proof}
Let us denote by $\CE(N)$ the set of all $E_0$-semigroups on a factor $N$, and let
$I$ denote the identity E$_0$-semigroup on $N$. 
Consider the map $\iota : \CE(\CB(\CH)) \to \CE(M \bar{\otimes} \CB(\CH))$ given by $\iota(\beta) = I \otimes \beta$. It follows from the 
previous theorem that $\iota$ takes extendable E$_0$-semigroups
to extendable E$_0$-semigroups, and furthermore the modular extension of
$\iota(\beta)$ with respect to a tensor weight is given by $ I \otimes \tilde{\beta}$, which is cocycle conjugate to $\tilde{\beta}$. It follows from Remark~\ref{rem:classification-in-BH} that $\tilde{\beta}$
has the same type as $\beta$. The result now follows from the existence of E$_0$-semigroups of type II and III  on type I$_\infty$ factors (see \cite{powers-typeII, powers-non-spatial}).
\end{proof}

\begin{Remark}
Since every modularly extendable equimodular E$_0$-semigroup $\alpha$ 
on a 
properly infinite factor $M$ has a joint unit with $\alpha'$, its modular extension
cannot be of type III. Therefore, it follows from the previous Corollary
that there exist E$_0$-semigroups which are modularly extendable yet not equimodular with respect to any weight. (Compare with Example~\ref{ex:not-equimodular}).
\end{Remark}

\subsection{Coupling Index}
The coupling index was first introduced by Margetts and Srinivasan~\cite{MS}, and to our knowledge the concept of superproduct system also appeared for the first time in \cite{MS}. 
We quickly review these definitions here for use Section~\ref{sec:examples}.

Let $(M,\phi)$ be a factorial noncommutative measure space, and 
let $\alpha$ be an  $E_0$-semigroup on  $M$. In order to simplify
notation, we will identify $M$ with $\pi_\phi(M)$, and we will
denote by $\alpha'$ the $E_0$-semigroup on $M^\prime$ obtained by
modular conjugation.

\begin{Definition}[\cite{MS}]
 A \emph{superproduct system of Hilbert spaces} is a 
 one-parameter family of separable Hilbert spaces $H=\{ (t, H_t) : t \geq 0 \}$, together
with isometries
\[
 U_{s,t} : H_s \otimes H_t \mapsto H_{s+t}, \qquad s, t \in (0, \infty)
\]
which satisfy the 
following requirements of associativity and measurability:
\begin{enumerate}
\item (Associativity) For any $s_1 , s_2 , s_3 \in (0, \infty)$
\[
U_{s_1 ,s_2 + s_3} (1_{H_{s_1}} \otimes U_{s_2 , s_3 }) = U_{s_1 +s_2 ,s_3}  (U_{s_1 ,s_2} \otimes 1_{H_{s_3}}).
\]
\item (Measurability) The space $H$ is equipped with a structure of standard Borel space that is such that the projection $p : H \to (0, \infty)$ onto the first coordinate is measurable,  and the inner
product $\langle\cdot, \cdot\rangle: \{ (\xi, \eta) \in H\times H: p(\xi)=p(\eta)\} \to \C$ is measurable.
\end{enumerate}
\end{Definition}

We will be particularly interested in the \emph{coupling superproduct system} $H_\alpha$ associated to $\alpha$ (with respect $\phi$, although 
we will suppress this dependency), defined as follows. For $t>0$ let
\begin{align*}
 E^{\alpha}(t) = \{ T \in \CB(\CH_\phi) ~: \alpha_t(x)T = Tx, ~ x \in M \},\\
 E^{\alpha^\prime}(t) = \{ T \in \CB(\CH_\phi) ~: \alpha^\prime_t(y)T = Ty, ~ y \in M^\prime \}. 
 \end{align*}
The fibers of $H_\alpha$ are given by $H_{\alpha}(t) = E^{\alpha}(t) \cap E^{\alpha^\prime}(t)$ for all $t$, and the complex-valued inner product is uniquely determined  by the identity $ y^*x= \langle x, y \rangle 1$, for  $x, y \in H_{\alpha}(t)$. We endow $H$ with relative product Borel structure on $(0,\infty)\times \CB(\CH_\phi)$, arising from
the weak*-topology on $\CB(\CH_\phi)$. See \cite{MS1} for more details,
and the proof that this is indeed a superproduct system.

\begin{Definition}
 A \emph{unit} for an $E_0$-semigroup $\alpha$ on a von Neumann algebra $M$ is a strongly continuous semigroup $\{ T_t : t \geq 0 \} $ 
 of bounded operators acting on $\CH_{\phi}$ satisfying $T_0 =I$ 
and $ \alpha_t(x) T_t  = T_tx$, for all $x \in M$.
\end{Definition}

\begin{Definition}
A  unit for a superproduct system $(H_t, U_{s,t})$ is a measurable section $\{u_t, t \geq 0 \}$ satisfying 
\[
 U_{s,t} ( u_t \otimes u_s ) = u_{s+t}, \quad s, t \in (0, \infty). 
\]
\end{Definition}

We note that the units of the coupling superproduct system $H_\alpha$
are precisely the common units for $\alpha$ and $\alpha'$. In particular,
such units may fail to exist. 

\begin{Remark}\label{rem:canonical-unit}
We note that when $\alpha$ is equimodular, units for the coupling superproduct system always exist. In fact, suppose that $\alpha$ is an equimodular $E_0$-semigroup on a factorial noncommutative measure space 
$(M, \phi)$. Then $\alpha_t$ is $\phi$-preserving for every $t$, hence  
there exists a one-parameter family of strongly continuous isometries
$\{ u_t : \geq 0 \}$ satisfying $u_t x_{\phi} = (\alpha_t(x))_\phi$, for all $x \in \FN_\phi \cap \FN_\phi^*$ and $t\geq 0$. 
Consequently we have $ u_t x  = \alpha_t(x)u_t$,  for all $x \in M$ and
$t\geq 0$, and it is clear that $(u_t)$ has the semigroup property which commutes with the modular conjugation. Hence it also interwines with
$\alpha'$ and constitutes a unit, which is called the \emph{canonical unit} for the coupling superproduct system.
 \end{Remark}

Let $\CU(\alpha, \alpha^\prime) $ be the collection of all units
of the coupling superproduct system $H_\alpha$, and suppose that
it is nonempty.
Let $S, T \in \CU(\alpha, \alpha^\prime)$ be two  units.
Then the function $f(t) = \langle S_t , T_t \rangle$ is measurable
and it satisfies  $f(t+s)= f(t)f(s)$ and $f(0)=1$. 
So there exists a complex number $c(S,T)$ such that $ \langle S_t , T_t \rangle = e^{tc(S,T)}$. The associated covariance function $c : \CU(\alpha, \alpha^\prime) \times \CU(\alpha, \alpha^\prime) \rightarrow \C$ is conditionally positive definite (by the same reasoning as in \cite[Proposition~2.5.2]{Arv}). Following the same approach as in the
definition of the Arveson-Powers index, we can define a Hilbert space
$\CH(\CU(\alpha, \alpha^\prime))$ as follows. Let 
$\C_0\CU(\alpha, \alpha^\prime)$ be the set of finitely supported zero-mean complex-valued functions endowed with the semi-definite
inner-product
$$
\langle f, g \rangle = \sum_{x,y\in \CU(\alpha, \alpha^\prime)} 
c(x,y)f(x)\overline{g(y)}.
$$
We define $\CH(\CU(\alpha, \alpha^\prime))$ to be the Hilbert space
obtained by the associated quotient and completion of $\C_0\CU(\alpha, \alpha^\prime)$ (for more details regarding this construction, see \cite[Remark~2.5.3]{Arv}). 

\begin{Definition}[\cite{MS}]
If $\alpha$ is an E$_0$-semigroup such that 
$\CU(\alpha, \alpha^\prime)$ is nonempty, then its coupling index is defined to be $\ind_c(\alpha) = \dim \CH(\CU(\alpha, \alpha^\prime))$. 
\end{Definition}

\begin{Remark}
It follows from Theorem~\ref{inv} and its proof that
the coupling superproduct systems associated to different weights
will be isomorphic via the unitary implementing the unitary equivalence
between the associated GNS representations.
By the same token, it is straightforward to apply the techniques of the proof of Theorem~\ref{prop:cocycle-conjugacy} to
show that the coupling index is a cocycle conjugacy invariant. 
\end{Remark}

\begin{Remark}\label{compute-coupling-index}
It is straightforward to check that if 
 $\alpha$ has a modular extension $\widetilde{\alpha}$, 
then the coupling superproduct system of $\alpha$ is actually
a \emph{product} system and its coupling constant is related
to the Arveson-Powers index of the modular extension by the formula
\[
\ind_c(\alpha) = \ind(\walpha)
\]
\end{Remark}

It is worth noting the perhaps inconvenient fact that when $\alpha$
is an E$_0$-semigroup of a type I$_\infty$ factor, the coupling
index turns out to be twice the Arveson-Powers index, i.e.
\[
\ind_c(\alpha) = 2 \ind(\alpha)
\]

The following proposition will be useful for the computation
of the coupling index in the examples. We omit the straightforward proof.

\begin{Proposition}\label{prop:superprod-in-space}
Let $(M,\phi)$ be a factorial noncommutative measure space,
 let $\alpha$ be an E$_0$-semigroup on $M$, and let $H_\alpha$ be its
 associated coupling superproduct system. Suppose that $\Omega \in \CH_{\phi}$ is a normalized vector. Then the map $\rho: H_\alpha \to 
 (0,\infty) \times \CH_\phi$ given by
 \[
 \rho(t, T) = (t, T\Omega)
 \]
 is injective, isometric fiberwise and measurable when the
 range has the canonical Borel product structure. In particular
 the $\rho(H_\alpha)$ has a natural superproduct system structure 
 via the pushforward, under which it is isomorphic to $H_\alpha$.
 \end{Proposition}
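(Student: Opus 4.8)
The plan is to check the three asserted properties of $\rho$ — fiberwise isometry, global injectivity, and measurability — by direct computation, and then to transport the superproduct structure from $H_\alpha$ to $\rho(H_\alpha)$ using the fact that an injective Borel map between standard Borel spaces is a Borel isomorphism onto a Borel image.

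First, fix $t>0$. The fibers $E^{\alpha}(t)$ and $E^{\alpha'}(t)$ are weak*-closed linear subspaces of $\CB(\CH_\phi)$, being intersections of kernels of the weak*-continuous maps $T\mapsto \alpha_t(x)T-Tx$ and $T\mapsto \alpha'_t(y)T-Ty$; hence $H_\alpha(t)=E^{\alpha}(t)\cap E^{\alpha'}(t)$ is a norm-closed linear subspace. For $T\in H_\alpha(t)$ one has $T^*T=\langle T,T\rangle 1$, so $\|T\|_{\mathrm{op}}^2=\|T^*T\|_{\mathrm{op}}=\langle T,T\rangle$; thus the Hilbert-space norm on $H_\alpha(t)$ agrees with the operator norm, and $H_\alpha(t)$ is complete. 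Now for $S,T\in H_\alpha(t)$,
\[
\langle T\Omega,S\Omega\rangle_{\CH_\phi}=\langle S^*T\Omega,\Omega\rangle=\langle T,S\rangle\,\langle\Omega,\Omega\rangle=\langle T,S\rangle,
\]
using $S^*T=\langle T,S\rangle 1$ and $\|\Omega\|=1$, so $\rho$ is a linear isometry on each fiber. For global injectivity, if $\rho(t,T)=\rho(s,S)$ then $t=s$ and $(T-S)\Omega=0$; since $T-S\in H_\alpha(t)$ by linearity of the fiber, the displayed identity gives $\|T-S\|_{H_\alpha(t)}=\|(T-S)\Omega\|=0$, hence $T=S$.

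For measurability, recall that $H_\alpha$ carries the relative Borel structure from $(0,\infty)\times\CB(\CH_\phi)$ with the weak*-topology. The map $\CB(\CH_\phi)\ni T\mapsto T\Omega\in\CH_\phi$ is continuous from the weak*-topology to the weak topology, since $T\mapsto\langle T\Omega,\eta\rangle$ is weak*-continuous for each $\eta$; as the weak and norm Borel structures on the separable Hilbert space $\CH_\phi$ coincide, $T\mapsto T\Omega$ is Borel for the norm structure, and so $\rho(t,T)=(t,T\Omega)$ is Borel into the product Borel structure. Since $H_\alpha$ is standard Borel (part of the definition of a superproduct system) and $(0,\infty)\times\CH_\phi$ is standard Borel, the Lusin--Souslin theorem shows $\rho(H_\alpha)$ is Borel and $\rho$ is a Borel isomorphism onto it. Transporting the structure: the fibers $\rho(H_\alpha)_t=\rho_t(H_\alpha(t))$ are closed subspaces of $\CH_\phi$ (images under an isometry of the complete space $H_\alpha(t)$) carrying the inner product inherited from $\CH_\phi$, which matches the original under $\rho_t$; and the isometries $\widetilde U_{s,t}:=\rho_{s+t}\circ U_{s,t}\circ(\rho_s^{-1}\otimes\rho_t^{-1})$ inherit associativity and measurability from the $U_{s,t}$ because $\rho$ is a fiberwise-unitary Borel isomorphism. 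Hence $\rho(H_\alpha)$ is a superproduct system isomorphic to $H_\alpha$ via $\rho$. The only step requiring genuine care is the descriptive-set-theoretic one — confirming that the weak*-Borel structure used to define $H_\alpha$ really makes it standard Borel so that Lusin--Souslin applies — while the rest is routine.
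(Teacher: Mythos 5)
Your proof is correct; note that the paper explicitly omits the proof of this proposition as ``straightforward,'' so there is no argument of record to diverge from, and what you have written is exactly the expected filling-in: the identity $S^*T=\langle T,S\rangle 1$ gives fiberwise isometry and (by linearity of the fibers) injectivity, weak*-to-weak continuity of $T\mapsto T\Omega$ gives measurability, and the structure transports along the resulting Borel isomorphism. You are also right to flag the only nontrivial point, namely that the Lusin--Souslin step needs $H_\alpha$ to be standard Borel, which is part of the definition of a superproduct system and is established for the coupling superproduct system in the cited work of Margetts and Srinivasan.
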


\subsection{Relative commutant index}
In this subsection we introduce an invariant for certain
$E_0$-semigroups on a factorial noncommutative measure space $(M, \phi)$,
which is a generalization of the invariant defined by Alevras~\cite{Alev} for the context of II$_1$ factors. 

Let us quickly review Hideki Kosaki's notion of index
for a subfactor of a general factor (see \cite{kosaki}).  
Let $N$ be a subfactor of a factor $M$ and let $E: M \rightarrow N$ be a faithful normal conditional expectation. Haagerup~\cite{Haa1, Haa2}) proved that there exists a faithful normal operator-valued weight 
$E^{-1}: N^\prime \rightarrow M^\prime$ which is characterized by
the following identity: if $\phi$ is an \fns weight on $N$ and
$\psi$ is an \fns weight on $M'$,
\[
\frac{d(\phi \circ E)}{d(\psi)} = \frac{d(\phi)}{d(\psi \circ E^{-1})},
\]
where 
$d(\phi \circ E)/{d(\psi)}$ and $ d(\phi)/{d(\psi \circ E^{-1})}$ are Connes spatial derivatives 
(see \cite{Con}). The Kosaki index of $E$, which is a scalar, is defined by
\[
(\ind E)\, 1 = E^{-1}(1)
\]
Let $\CE(M, N)$ be the collection of all faithful
normal conditional expectations from $M$ onto $N$. Then the
\emph{minimal index of the pair $N\subseteq M$} is defined to be
$$
[M: N] = \min \{ \ind E \,:\,  E \in  \CE(M,N) \}.
$$
We note that if $\ind E = \infty$ for some $E \in \CE(M,N)$, then
it is infinite for all elements of $\CE(M,N)$, in which case
$[M:N] = \infty$ (see \cite{FH}). In fact, there 
exists $E_0 \in \CE(M,N)$ such that 
$[M:N] = \ind E_0$. We note that if $\gamma$ is an automorphism of $M$,
then by \cite[Theorem~2.2]{kosaki}, 
\begin{equation}\label{eq:index}
[M  : \gamma(N) ] = [M : N]
\end{equation}

\begin{Definition}
Let $M$ be a factor, and let $\alpha = \{ \alpha_t : t \geq 0 \} $ be 
an $E_0$-semigroup  on $M$. For every $t \geq 0$, let
$N_{\alpha}(t) = ( \alpha_t(M)^\prime \cap M) \vee \alpha_t(M)$
be the von Neumann algebra generated by
$\alpha_t(M)^\prime \cap M $ and $\alpha_t(M)$. 
We denote by $\CI_{\alpha}$ the set of all $t \geq 0$ such that $N_{\alpha}(t)$ is a subfactor of $M$
and $\CE(M, N_t) \neq \varnothing$. For every $t \in \CI_\alpha$,
let 
\[
c_{\alpha}(t) = [ M : N_{\alpha}(t) ]. 
\]
If $\CI_\alpha \neq\varnothing$, then we define the \emph{relative commutant index of $\alpha$} to be the family $(c_\alpha(t))_{t\in \CI_\alpha}$.
\end{Definition}

\begin{Lemma}\label{C-cond}
Let $(M,\phi)$ be a noncommutative probability space, and let $\alpha$ be an equimodular $E_0$-semigroup on $M$. Then for every $t \geq 0 $, there exists a faithful normal conditional 
expectation $ E_t : M \rightarrow N_\alpha(t)$. 
\end{Lemma}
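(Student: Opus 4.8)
The plan is to build the conditional expectation $E_t: M \to N_\alpha(t)$ by combining two conditional expectations: one onto $\alpha_t(M)$ coming from equimodularity via Theorem~\ref{equi}, and one onto the relative commutant $\alpha_t(M)' \cap M$. First I would fix $t \geq 0$ and set $N = \alpha_t(M)$, which is a subfactor of $M$ since $\alpha_t$ is a normal endomorphism of the factor $M$. Because $\alpha$ is equimodular, each $\alpha_t$ is $\phi$-preserving and equimodular, so Theorem~\ref{equi} yields a $\phi$-preserving faithful normal conditional expectation $E^N: M \to N$ satisfying $E^N(x) e_{\alpha_t} = e_{\alpha_t} x e_{\alpha_t}$, where $e_{\alpha_t} = u_{\alpha_t} u_{\alpha_t}^*$ is the Jones projection. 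The key structural observation, already used in the proof of Theorem~\ref{extcon}, is that $E^N$ restricts to a conditional expectation from the relative commutant $M \cap N'$ onto $N \cap N' = \C 1$ (using that $N$ is a factor), so $E^N$ interacts well with the relative commutant.

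Next I would produce a conditional expectation onto $N_\alpha(t) = (N' \cap M) \vee N$. The natural candidate is to note that $N' \cap M$ and $N$ generate $N_\alpha(t)$ and, being relatively commuting subalgebras with $N$ a factor, one expects $N_\alpha(t)$ to be (a quotient of) the von Neumann algebra tensor product $(N' \cap M) \bar\otimes N$; in fact since $N$ is a factor the natural map $(N'\cap M) \bar\otimes N \to N_\alpha(t)$ is a normal $*$-isomorphism. The point is then to realize $E_t$ on generators by the formula $E_t(y a) = \big(\langle \cdot \rangle\text{-average of } y\big)\, E^N(a)$ is not quite right; instead I would proceed as follows. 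Observe that $M \cap N'$ is globally invariant under $\sigma^\phi_t$: indeed, by the Lemma following Theorem~\ref{equi}, equimodularity gives $\sigma^\phi_t \circ \alpha_s = \alpha_s \circ \sigma^\phi_t$, hence $\sigma^\phi_t(N) = N$ and therefore $\sigma^\phi_t(N' \cap M) = N' \cap M$. Consequently $\sigma^\phi_t(N_\alpha(t)) = N_\alpha(t)$, and $\phi|_{N_\alpha(t)}$ is semifinite (since $\phi|_N$ is semifinite, already noted in the proof of Theorem~\ref{equi}, and $N \subseteq N_\alpha(t)$). By Takesaki's theorem \cite{Ta}, the invariance $\sigma^\phi_t(N_\alpha(t)) = N_\alpha(t)$ together with semifiniteness of $\phi|_{N_\alpha(t)}$ gives exactly the existence of a faithful normal $\phi$-preserving conditional expectation $E_t: M \to N_\alpha(t)$.

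So the cleanest route is: (1) $N = \alpha_t(M)$ is a subfactor; (2) equimodularity $\Rightarrow$ $\sigma^\phi_t$-invariance of $N$, hence of $N' \cap M$, hence of $N_\alpha(t)$; (3) $\phi|_{N_\alpha(t)}$ is semifinite because it dominates the semifinite weight $\phi|_N$; (4) apply Takesaki's theorem. I would not even need to explicitly construct $E_t$ on generators — Takesaki's criterion does all the work once invariance and semifiniteness are in place. The main obstacle I anticipate is verifying step (2) carefully, namely that equimodularity genuinely yields $\sigma^\phi_t(\alpha_s(M)) = \alpha_s(M)$; but this is precisely property~\eqref{lemma-cond1} of the Lemma following Theorem~\ref{equi} (applied with $\theta = \alpha_s$), combined with the fact that $\sigma^\phi_t(N) = N$ follows from $\sigma^\phi_t \circ \alpha_s = \alpha_s \circ \sigma^\phi_t$ by applying both automorphisms. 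A secondary, more technical point is confirming that $\phi$ restricted to $N_\alpha(t)$ is semifinite; this follows since $\FN_\phi \cap N$ is $\sigma$-weakly dense in $N \subseteq N_\alpha(t)$, so the restriction is already semifinite on the subalgebra $N$ and hence on $N_\alpha(t)$.
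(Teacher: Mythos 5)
Your final route --- equimodularity gives $\sigma^\phi$-invariance of $\alpha_t(M)$, hence of $\alpha_t(M)'\cap M$, hence of $N_\alpha(t)$, plus semifiniteness of the restricted weight, then Takesaki's theorem --- is exactly the paper's proof, and it is correct. The first paragraph's detour through composing expectations and a tensor-product decomposition of $N_\alpha(t)$ is unnecessary (and you rightly abandon it); only steps (1)--(4) are needed.
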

\begin{proof}
By Theorem~\ref{equi}, we have that $\alpha_t(M)$ is invariant under the modular automorphism group $\sigma_t^\phi$, hence so is its relative commutant. Therefore, $N_\alpha(t)$  is also invariant under the modular
group. It follows from Takesaki's theorem, that there exists a $\phi$-preserving faithful normal conditional expectation
$E_t$ from $M$ onto $N_t$. 
\end{proof}

\begin{Remark}
It follows that if $\alpha$ is equimodular, then $t \in \CI_\alpha$ 
if and only if $N_\alpha(t)$ is a factor. Thus, by \cite[Corollary~10.7]{SS} we have  that $t \in \CI_\alpha$  if and
only if $\alpha_t(M)'\cap M$ is a factor. In all the examples
we consider in this paper, this condition holds for all $t\geq 0$.
\end{Remark}

\begin{Proposition}\label{prop:relative-commutant-index}
Let $M$ and $N$ be factors and let 
 $\alpha$ and $\beta$ be $E_0$-semigroups on $M$ and $N$, respectively.
 Suppose that  $\CI_\alpha$ and  $\CI_\beta$ are nonempty. Then

\begin{enumerate}[ label={(\roman*)}, start=1, leftmargin=1cm , itemsep=0.2cm]
 \item The relative commutant index of $\alpha$, that is
 the family $\{c_{\alpha}(t)\}_{t \in \CI_\alpha}$, 
 is invariant under conjugacy and  cocycle conjugacy.
 \item For all   $t \in \CI_\alpha \cap \CI_\beta$ we have that
$
 c_{\alpha \otimes \beta}(t) = c_{\alpha}(t)\cdot c_{\beta}(t).
$
  \end{enumerate}
\end{Proposition}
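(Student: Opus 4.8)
\textbf{Proof proposal for Proposition~\ref{prop:relative-commutant-index}.}

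The plan is to handle the two items separately, using the tools developed earlier in the paper. For item (i), the key point is that conjugacy and cocycle conjugacy act on $\alpha_t(M)$, and hence on $N_\alpha(t) = (\alpha_t(M)' \cap M) \vee \alpha_t(M)$, in a way that preserves the minimal index. First I would treat plain conjugacy: if $\beta_t = \gamma \circ \alpha_t \circ \gamma^{-1}$ for an automorphism $\gamma$ of $M$, then $\beta_t(M) = \gamma(\alpha_t(M))$, so $\beta_t(M)' \cap M = \gamma(\alpha_t(M)' \cap M)$ and therefore $N_\beta(t) = \gamma(N_\alpha(t))$. Since $\gamma$ is an automorphism of $M$, it carries $\CE(M, N_\alpha(t))$ bijectively onto $\CE(M, N_\beta(t))$, so one set is nonempty iff the other is, whence $t \in \CI_\alpha \Leftrightarrow t \in \CI_\beta$; and by \eqref{eq:index} we get $c_\beta(t) = [M : \gamma(N_\alpha(t))] = [M : N_\alpha(t)] = c_\alpha(t)$. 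Next I would treat cocycle equivalence: if $\beta_t(x) = U_t \alpha_t(x) U_t^*$ for an $\alpha$-cocycle $U$, then $\beta_t(M) = U_t \alpha_t(M) U_t^* = \mathrm{Ad}(U_t)(\alpha_t(M))$, and since $U_t$ is a unitary \emph{in} $M$, the inner automorphism $\mathrm{Ad}(U_t)$ of $M$ plays exactly the role of $\gamma$ above, giving $N_\beta(t) = U_t N_\alpha(t) U_t^*$ and $c_\beta(t) = c_\alpha(t)$, again with $\CI_\alpha = \CI_\beta$. Combining the two cases handles general cocycle conjugacy.

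For item (ii), fix $t \in \CI_\alpha \cap \CI_\beta$. The first step is to identify $N_{\alpha\otimes\beta}(t)$ with $N_\alpha(t) \bar\otimes N_\beta(t)$. This rests on the commutation theorem for tensor products of von Neumann algebras: $(\alpha_t \otimes \beta_t)(M \bar\otimes N) = \alpha_t(M) \bar\otimes \beta_t(N)$, and its relative commutant in $M \bar\otimes N$ is $(\alpha_t(M)' \cap M) \bar\otimes (\beta_t(N)' \cap N)$. Taking the von Neumann algebra generated by this together with $\alpha_t(M) \bar\otimes \beta_t(N)$, one checks — using that $\alpha_t(M)' \cap M$ commutes with $\alpha_t(M)$ inside $M$, and similarly on the $N$ side — that the result is precisely $N_\alpha(t) \bar\otimes N_\beta(t)$. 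In particular this is a subfactor of $M \bar\otimes N$ (a tensor product of two factors is a factor), so $t \in \CI_{\alpha\otimes\beta}$ as soon as a conditional expectation exists, which it does: if $E^M \in \CE(M, N_\alpha(t))$ and $E^N \in \CE(N, N_\beta(t))$ then $E^M \otimes E^N \in \CE(M\bar\otimes N, N_\alpha(t)\bar\otimes N_\beta(t))$. The second step is the multiplicativity of the minimal index under tensor products, namely $[M \bar\otimes N : P \bar\otimes Q] = [M:P]\cdot[N:Q]$ for subfactors $P \subseteq M$ and $Q \subseteq N$; this is a known property of Kosaki's (minimal) index, following from the fact that the operator-valued weight dual to $E^M \otimes E^N$ is $(E^M)^{-1} \otimes (E^N)^{-1}$, so that $(E^M \otimes E^N)^{-1}(1) = (E^M)^{-1}(1) \otimes (E^N)^{-1}(1) = (\ind E^M)(\ind E^N)\,1$, and that minimizing over expectations factors through the two tensor legs. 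Applying this with $P = N_\alpha(t)$, $Q = N_\beta(t)$ yields $c_{\alpha\otimes\beta}(t) = c_\alpha(t)\cdot c_\beta(t)$.

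The main obstacle I anticipate is the identification $N_{\alpha\otimes\beta}(t) = N_\alpha(t) \bar\otimes N_\beta(t)$, i.e.\ verifying carefully that the von Neumann algebra generated by $(\alpha_t(M) \bar\otimes \beta_t(N)) \cup ((\alpha_t(M)'\cap M) \bar\otimes (\beta_t(N)'\cap N))$ is exactly the tensor product of the two generated-von-Neumann-algebras, rather than something a priori smaller or requiring the full relative commutant $(\alpha_t(M)\bar\otimes\beta_t(N))' \cap (M\bar\otimes N)$. One clean way is to note $N_\alpha(t) \bar\otimes 1 \subseteq N_{\alpha\otimes\beta}(t)$ and $1 \bar\otimes N_\beta(t) \subseteq N_{\alpha\otimes\beta}(t)$ (each generator of $N_\alpha(t)$ tensored with $1$ is visibly of the required form, using $1 \in \beta_t(N) \cap (\beta_t(N)'\cap N)$), giving the inclusion $\supseteq$; the reverse inclusion is immediate since each generating set lies in $N_\alpha(t)\bar\otimes N_\beta(t)$. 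A secondary technical point is to make sure the multiplicativity of the minimal index is cited or argued at the right level of generality (arbitrary factors, possibly infinite index), but since the $E^{-1}$ formalism of Haagerup and Kosaki reviewed above tensorizes, this is routine. I would also remark that the invariance statement in (i) could alternatively be deduced from Proposition~\ref{prop:cocycle-conjugacy}-style arguments, but the direct argument via \eqref{eq:index} is shorter.
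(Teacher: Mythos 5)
Your proof is correct and follows essentially the same route as the paper's: item (i) via the automorphism-invariance of the minimal index, eq.~\eqref{eq:index}, applied to $\gamma$ and $\mathrm{Ad}(U_t)$, and item (ii) via the identification $N_{\alpha\otimes\beta}(t)=N_\alpha(t)\bar\otimes N_\beta(t)$ (commutation theorem plus splitting of relative commutants) followed by multiplicativity of the minimal index under tensor products. Your extra checks — nonemptiness of $\CE(M\bar\otimes N, N_\alpha(t)\bar\otimes N_\beta(t))$ via $E^M\otimes E^N$, and the two-sided inclusion for the generated algebra — are details the paper leaves implicit.
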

 
\begin{proof}
\begin{enumerate}[ label={(\roman*)}, start=1, leftmargin=1cm , itemsep=0.2cm]
 \item 
 Since the minimal index is invariant under automorphisms cf.\ eq.~\eqref{eq:index},  it
 is conjugacy invariant. We now prove that it is an invariant of
 cocycle equivalence.
Let $\{ U_t : t \geq 0 \}$ be an $\alpha$-cocycle in $M$ and suppose
that $\beta_t = \text{Ad}(U_t)\circ \alpha_t $.  It is
 straightforward to see that 
\[
 ({\beta_t(M)}^\prime \cap M ) \vee \beta_t(M) = U_t( ({\alpha_t(M)}^\prime \cap M ) \vee \alpha_t(M)) U_t^{*},
\]
i.e.\ $N_{\beta}(t)  =U_t N_{\alpha}(t) U_t^* $. So by eq.~\eqref{eq:index}, we have that
$[M : N_{\alpha}(t) ]= [M : N_{\beta}(t) ]$, i.e.\ $c_{\alpha}(t) = c_{\beta}(t)$.
\item
We have, for all $t \geq 0$, 
\begin{align*}
N_{\alpha\otimes \beta}(t) 
& =\left({ {(\alpha_t\otimes \beta_t)(M \otimes N)}^\prime \cap  (M\otimes N) } \right) 
\vee (\alpha_t\otimes \beta_t)(M\otimes N)\\
&= \left( {({\alpha_t(M) \otimes \beta_t(N)})^\prime \cap M\otimes N }\right) 
\vee \alpha_t(M) \otimes \beta_t(N)  \\
&= \left(({\alpha_t(M)}^\prime \otimes {\beta_t(N)}^\prime) \cap M\otimes N \right) 
\vee  \alpha_t(M) \otimes \beta_t(N) \\
&= \left(({\alpha_t(M)}^\prime \cap M) \otimes ({\beta_t(N)}^\prime \cap N) \right) 
\vee  \alpha_t(M) \otimes \beta_t(N) \\ 
&= \left(({\alpha_t(M)}^\prime \cap M) \vee  \alpha_t(M)\right) \otimes \left(({\beta_t(N)}^\prime \cap N) 
\vee \beta_t(N)\right) = N_\alpha(t) \otimes N_\beta(t)
\end{align*}
Then the multiplicative property of the minimal index over the tensor product completes the proof (see \cite[Corollary~5.6]{longo}). 
\end{enumerate}
\end{proof}

\begin{Remark}\label{relative-index-extendable}
Let $(M,\phi)$ be a noncommutative measure space, and let $\alpha$
be an equimodular E$_0$-semigroup on $M$. By
Theorem~\ref{E0-extcon}, if $I_\alpha = [0, \infty)$
and $c_\alpha(t)=1$ for all $t$, we have that $\alpha$ is 
modularly extendable. Conversely, also by 
Theorem~\ref{E0-extcon}, when $\phi$ is a faithful state and 
$\alpha$ is modularly extendable
we have that
$N_\alpha(t) = M$ for every $t$. Therefore $I_\alpha = [0, \infty)$
and $c_\alpha(t)=1$ for all $t$.
\end{Remark}

\section{Examples}\label{sec:examples}

In this section we determine the modular extendability, coupling index and relative commutant index for the following examples of E$_0$-semigroups:  $q$-CCR flows for $q\in (-1,1)$ and CAR flows.

For the sake of comparison, we start by commenting briefly on the CCR flows, which have been studied extensively by Margetts and Srinivasan in \cite{MS1}.

For $q\in (-1,1)$, the $q$-CCR flows provide an interesting generalization of the CCR-flows (which would correspond to the case $q=1$). The $q$-CCR flows however, in contrast to the CCR flows, 
 turn out \emph{not} to be modularly extendable. As we will discuss, the $q$-CCR flows act on the so called $q$-Gaussian II$_1$ factors, which are not injective,  do not have property $\Gamma$ and are strongly solid. 

The CAR flows provide similar examples of E$_0$-semigroups which are equimodular but not modularly extendable, and they act on hyperfinite factors of type II$_1$, type II$_\infty$ and type III$_\lambda$ for $\lambda \in (0,1)$,
depending on the choice of quasi-free state.

To our knowledge, the $q$-CCR flows have not been considered directly in the literature earlier from the point of view of classification of E$_0$-semigroups. The CAR flows for a subset of 
quasi-free states considered here, appeared earlier in \cite{BISS} and
\cite{Bikram}, respectively. In this paper we are interested in their
invariants, which have not been computed before.

For the remainder of this section, let $\CK$ be a separable Hilbert
space and let $\CH = L^2 (0, \infty ) \otimes \CK$ and let 
$\{S_t\}_{t \geq 0 }$ be the shift semigroup on $\CH$ defined by
\[
(S_tf)(s) = \begin{cases}   0, & \quad s<t,\\
  f(s-t),&  \quad s \geq t.
\end{cases} 
\]

\subsection{CCR flows} We review the definition of the CCR flows. Let 
$\Gamma_s(\CH)$ denote the symmetric or Bosonic Fock space
with one-particle space $\CH$ and vaccum vector $\Omega$. 
Given $f \in \CH$, let $W(f) \in \CB(\Gamma_s(\CH))$ be the Weyl operator uniquely determined by
\[
W(f) \Omega = \exp(f) := \sum_{n=0}^\infty \frac{1}{\sqrt{n!}} \, f^{\otimes^n}
\]
The CCR algebra $\ccr(\CH)$ is the C*-algebra generated by the 
all Weyl operators over $\CH$.

Let $A\in \CB(\CH)$ be an operator such that  $A-1$ is positive. 
There exists a unique state $\varphi_A$ on $\ccr(\CH)$, called the quasifree state with symbol $A$, satisfying
\[
\varphi_A(W(f)) = e^{-\frac{1}{2} \langle Af, f \rangle}
\]
In addition, when $A-1$ is injective, the GNS representation of
$\varphi_A$ can be described explicitly as follows. Let $T = \frac{1}{2}(A-1)$ and let $q$ be an anti-unitary on
$\CH$ such that $qS_t = S_tq,$ for $t\geq 0$.  
Let $\pi_A$ be the representation of $\ccr(\CH)$
on  $\Gamma_s(\CH)\otimes \Gamma_s(\CH)$ satisfying
\[
 \pi_A(W(f) ) = W
(\sqrt{1+T}f)\otimes W(q\sqrt{T}f ), \qquad \forall f \in \CH
\]
Then it is straightforward to check that 
$\pi$ is the GNS representation for $\varphi_A$ with 
cyclic and separating vector $\Omega \otimes \Omega$.

\begin{Definition}
Let $\CK$ be a Hilbert space, let $\CH = L^2(0,\infty)\otimes \CK$,
and let $A \in \CB(\CH)$ be an operator such that $A\geq 1$,  $A-1$ is 
injective and $S_t^*AS_t = A$ for all $t$. 
The \emph{CCR flow} corresponding to $A$ is the unique  $E_0$-semigroup $\beta^{A}$ on $M_A = \pi_A(\ccr(\CH))''$ satisfying 
\[
\beta^{A}_t(\pi_A(W(f)))=\pi_A(W(S_tf)), \qquad  \forall f \in \CH, t\geq 0.
\]
\end{Definition}

For simplicity, let us fix such a Hilbert space $\CK$ and operator
$A \in \CB(\CH)$ such that $A\geq 1$, $A-1$ is injective and 
and $S_t^*AS_t = A$ for all $t$. The existence of the CCR flow $\beta^A$ is a direct consequence of a straightforward generalization of \cite[Proposition 2.1.3]{Arv}.

Margetts and Stinivasan~\cite[Proposition 7.5]{MS1} proved that $\beta^A$ is 
equimodular if and only if there exists $R \in \CB(\CK)$ such
that $A = 1 \otimes R$, and in that case $\beta^A$ is modularly
extendable with modular extension given by the CCR flow on $\CB(\Gamma_s(\CH)\otimes\Gamma_s(\CH) )$ of index $2\kappa$. 
In summary, in our language \cite[Proposition 7.5]{MS1} states that the CCR flow associated 
to  $A = 1 \otimes R$ is equimodular, modularly extendable, has type EI and $\ind_c(\beta^A) = 2 \kappa$.

Moreover, by Remark~\ref{relative-index-extendable}, since in this case $\beta^A$ is equimodular and modularly extendable, its relative commutant index
is the constant family equal to 1.

We note that if $A = 1 \otimes R$ for $R\geq 1$, and $A-1$ injective,
we have that $M_A$ is a type III factor (see \cite{MS1}). For example, for $\lambda \in (0,1)$, let $A=\frac{1+\lambda}{1-\lambda}$.
Then $A\geq 1$ and $A-1$ is invertible,  and  $M_\lambda= \pi_\lambda(\ccr(\CH))''$ is a type III$_\lambda$
factor (see \cite{Hol,ArWo}).

\subsection{$q$-CCR flows}

Here we discuss examples of $E_0$-semigroups arising from the $q$-cano\-nical commutation relations. 
For more details on the basic construction see \cite{BS,BKS}.
Following the convention used in the literature for compatibility
of the formulas, in this subsection
our inner product will be conjugate linear in the first entry.

Let $\CK_\R$ be a \emph{real} Hilbert space and let  
$\CK = \CK_\R + i \CK_\R$ be its complexification. Let
$\CH_\R = L^2 (0, \infty; \CK_\R)$ be the real Hilbert
space of square integrable functions taking values in $\CK_\R$, and
let $\CH= L^2 (0, \infty ;\CK) $, which is the complexification of $\CH_\R$.   

Let $ q \in (-1, 1) $ is a fixed real number. 
Let  $\CF_f(\CH)$ be the linear span of vectors of the form 
 $f_1 \otimes f_2 \otimes \cdots \otimes f_n \in {\CH}^{\otimes n } $ (with varying
$n \in \N$), where we set ${\CH}^{\otimes 0 } \cong \C \Omega$  for some distinguished vector,  
called the vacuum vector. On $\CF_f(\CH)$, we consider the sesquilinear form $\langle \cdot, \cdot \rangle_q$ 
given by the sesquilinear extension of 
\[
 \langle f_1 \otimes f_2 \otimes \cdots f_n  ,~ g_1 \otimes g_2 \otimes \cdots g_m \rangle_q
:= \delta_{mn}\sum_{ \pi \in S_n }q^{i(\pi)} 
\langle f_1,  g_{\pi(1)}\rangle \cdots \langle f_n,  g_{\pi(n)}\rangle 
\]
where $S_n$ denotes the symmetric group of permutations of $n$ elements and $i(\pi)$ is the
number of inversions of the permutation $\pi \in S_n$, defined by
\[
 i(\pi) := \# \big\{(i, j) \,\big|\, 1 \leq i < j \leq n, \;
 \pi(i) > \pi(j)\big\}.
\]
 The $q$-Fock space $\CF_q(\CH)$ is the completion of 
 $\CF_f(\CH)$ with respect
to $\langle \cdot, \cdot \rangle_q$.  
Given $f \in \CH$, the creation operator $l(f)$ on $\CF_q(\CH)$ 
is the bounded operator defined by
\begin{align*}
l(f )\Omega  & = f,  \\
l(f )f_1 \otimes \cdots \otimes f_n & = f \otimes f_1 \otimes \cdots\otimes f_n,
\end{align*}
and its adjoint is the annihilation operator $l(f)^*$ given by 
\begin{align*}
 {l(f )}^*\Omega & = 0,\\
{l(f )}^* f_1 \otimes \cdots \otimes f_n 
& = \sum^{n}_{i=1}q^{i-1} \langle f,  f_i \rangle f_1 \otimes \cdots\otimes \breve{f_i}\otimes \cdots \otimes f_n,  
\end{align*}
We have that the following $q$-canonical commutation relation is 
satisfied: 
\[
l(f)^* l(g) - ql(g)l(f)^* = \langle f, g \rangle \cdot1 \qquad f, g \in \CH.
\]
For $f \in \CH_\R $, we define the self-adjoint operator $W(f) =  l(f ) + {l(f)}^*$, and we define the von Neumann algebra 
\[ 
\Gamma_q(\CH_\R) = \{ W(f) \,|\,  f \in \CH_\R \}''. 
\]

We recall that for every $q \in (-1,1)$, the so called $q$-Gaussian von Neumann algebra $\Gamma_q(\CH_\R)$ is a II$_1$ factor (see \cite{BKS}), which is not injective (see \cite{nou}),  does not have property $\Gamma$ (see \cite{sniady}) and it is strongly solid (see \cite{avsec}). The vector state $\tau(x)= \langle x \Omega, \Omega \rangle_q $ is the trace for $\Gamma_q(\CH_\R)$, hence 
$\Gamma_q(\CH_\R)$ is in standard form in $\CB(\CF_q(\CH))$ with respect
to the the cyclic and separating vector $\Omega$. Therefore we have
a well-defined injective map $W: \Gamma_q(\CH_\R)\Omega \to \Gamma_q(\CH_\R)$ uniquely determined by the identity $W(\xi)\Omega = \xi$ for $\xi \in \Gamma_q(\CH_\R)\Omega$ (we note that this definition
of $W$ extends the previous one since $W(f)\Omega = f$ when $f\in \CH_\R$). 
Let $e \in \CH_\R$ be a vector of norm one and denote by $E_e$ the closed subspace of $\CF_q (\CH)$ spanned
by the elements $\{e^{\otimes n} \,|\, n \geq 0\}$, i.e.\ $ E_e = \CF_q(\C e)$. It is straightforward to check that for $\xi \in \Gamma_q(\CH_\R)\Omega$, we have that
 $W(\xi) \in W(e)''$
if and only if $\xi \in E_e \cap \Gamma_q(\CH_\R)\Omega$. 
 
\begin{Definition}
Suppose that $q \in (-1,1)$.
Let $\{S_t\}_{t\geq 0 }$ denote the shift semigroup on $\CH$, and 
also its restriction to $\CH_\R$. The \emph{$q$-CCR flow} of rank $\dim \CK_\R$ is the unique E$_0$-semigroup
$\alpha^q$ on $\Gamma_q(\CH_\R)$ such that
 \[
  \alpha^q_t(W(f)) = W(S_tf), \quad   f \in \CH_\R. 
 \]
 \end{Definition}

We note that the $q$-CCR flow is a well-defined E$_0$-semigroup, 
since it is obtained
via the second quantization functor $\Gamma_q$ introduced by 
Bo\.zejko, K\"ummerer and Speicher~\cite{BKS}.

\begin{Theorem}
Suppose that $q\in (-1,1)$ and let $\alpha^q$ be q-CCR flow corresponding to a real Hilbert space $\CK_\R$.
\begin{enumerate}
\item The $q$-CCR flow $\alpha^q$ is equimodular with respect
to the trace $\tau(x) = \langle x\Omega, \Omega\rangle_q$.
\item $\Gamma_q(\CH_\R) \cap {\alpha^q_t (\Gamma_q(\CH_\R))}^\prime = \C\cdot 1 $ for all $t \geq 0$. 

\item the $q$-CCR flow $\alpha^q$ is not modularly extendable.

\item Let $\CF_q(S_t) \in \CB(\CF_q(\CH))$ be the quantization of the shift
$S_t$. The coupling su\-per\-pro\-duct system $(H_{\alpha^q}(t))_{t\geq 0}$ is given by $H_{\alpha^q}(t) = \C \cdot \CF_q(S_t)$ and  multiplication is given by operator multiplication.

\item The coupling index of $\alpha^q$ is zero.

\item The relative commutant index of $\alpha^q$ is the constant family equal to $\infty$.
\end{enumerate}

\end{Theorem}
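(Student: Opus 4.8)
The plan is to reduce everything to parts (1) and (2), which are the substantive points, and to derive (3)--(6) formally from them together with the machinery already set up. (Throughout I assume $\CK_\R\neq 0$, since otherwise $\Gamma_q(\CH_\R)=\C$ and the statement is degenerate.) For (1): $\Gamma_q(\CH_\R)$ is a II$_1$ factor, so $\tau\circ\alpha^q_t$ is again a normal tracial state and must equal $\tau$; hence $\alpha^q$ is $\tau$-preserving, and equimodularity with respect to $\tau$ then follows from the fact that every unital normal endomorphism of a II$_1$ factor is equimodular with respect to the trace. For (2), I would write $M=\Gamma_q(\CH_\R)$, observe that $\alpha^q_t(M)=\Gamma_q(S_t\CH_\R)$ with $S_t\CH_\R=L^2(t,\infty;\CK_\R)$, whose complement in $\CH_\R$ is infinite-dimensional, and take a self-adjoint $a\in\alpha^q_t(M)'\cap M$ with $\tau(a)=0$, aiming to prove $a=0$. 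Since $a$ commutes with the commutant $M'$ (generated by the right creation/Wick operators $W^{r}(g)$, $g\in\CH_\R$) and, being in $\alpha^q_t(M)'$, with $W(g)$ for every $g\in S_t\CH_\R$, applying both to $\Omega$ and using $W(g)\Omega=W^{r}(g)\Omega=g$ shows that $\xi:=a\Omega$ satisfies $W(g)\xi=W^{r}(g)\xi$ for all $g\in S_t\CH_\R$. The main obstacle is the remaining step: a degree-by-degree analysis of this identity on the $q$-Fock space (crucially using $q\neq 1$), which I expect to force any such $\xi$ — one satisfying the common left/right relation for an infinite-dimensional real subspace — to lie in $\C\Omega$; combined with $\xi\perp\Omega$ (as $\tau(a)=0$) this gives $\xi=0$, hence $a=0$ and $\alpha^q_t(M)'\cap M=\C\cdot 1$.

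Granting (1) and (2), part (3) is immediate from Theorem~\ref{E0-extcon}: since $\tau$ is a state and $\alpha^q$ is equimodular, $\alpha^q$ is modularly extendable if and only if $(\alpha^q_t(M)\cup(\alpha^q_t(M)'\cap M))''=M$ for all $t$; by (2) the left side is $\Gamma_q(S_t\CH_\R)$, which is a proper subalgebra for $t>0$ because the canonical trace-preserving conditional expectation onto it annihilates $W(f)$ for $0\neq f\in L^2(0,t;\CK_\R)$. For (4), the canonical unit (Remark~\ref{rem:canonical-unit}) is $\CF_q(S_t)$, since $\CF_q(S_t)\Omega=\Omega$ and $\CF_q(S_t)x\Omega=(\alpha^q_t(x))\Omega$, and $\CF_q(S_t)$ commutes with $J$. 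Given any $T\in H_{\alpha^q}(t)=E^{\alpha^q}(t)\cap E^{(\alpha^q)'}(t)$, the intertwining relations on both sides and factoriality of $M$ give $\CF_q(S_t)^{*}T\in M\cap M'=\C$, so after subtracting a scalar multiple of $\CF_q(S_t)$ we may assume $\CF_q(S_t)^{*}T=0$; then $T^{*}T\in M\cap M'=\C$ shows $T$ is a scalar times an isometry, and if $T\neq 0$ the argument of (2), now applied using $T\in E^{\alpha^q}(t)$ and $T\in E^{(\alpha^q)'}(t)$, gives $W(g)(T\Omega)=W^{r}(g)(T\Omega)$ for all $g\in S_t\CH_\R$, hence $T\Omega\in\C\Omega$; but $\CF_q(S_t)^{*}T=0$ forces $T\Omega\perp\Omega$, so $T=0$. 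Thus $H_{\alpha^q}(t)=\C\cdot\CF_q(S_t)$, and since the superproduct multiplication is operator multiplication and $\CF_q(S_t)\CF_q(S_s)=\CF_q(S_{t+s})$, the stated description follows.

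For (5): by (4) every unit of the coupling superproduct system is $u^{\lambda}_t=e^{\lambda t}\CF_q(S_t)$ for some $\lambda\in\C$, and since $\CF_q(S_t)$ is an isometry the covariance is $c(u^{\lambda},u^{\mu})=\lambda+\overline{\mu}$; hence on finitely supported zero-mean functions the induced form reduces to $\big(\sum_{\lambda}\lambda f(\lambda)\big)\big(\sum_{\mu}\overline{g(\mu)}\big)+\big(\sum_{\lambda}f(\lambda)\big)\big(\sum_{\mu}\overline{\mu}\,\overline{g(\mu)}\big)=0$, so $\CH(\CU(\alpha^q,(\alpha^q)'))=\{0\}$ and $\ind_c(\alpha^q)=0$. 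For (6): by (2), $N_{\alpha^q}(t)=\alpha^q_t(M)$, which is a subfactor admitting a trace-preserving conditional expectation by Lemma~\ref{C-cond}, so $c_{\alpha^q}(t)=[M:\alpha^q_t(M)]$; for $t>0$ this is a proper inclusion of II$_1$ factors, hence $\geq 2$ by Jones' restriction on index values, and multiplicativity of the minimal index along the tower $M\supseteq\alpha^q_{t/n}(M)\supseteq\alpha^q_{2t/n}(M)\supseteq\cdots$ — each step being isomorphic, via the endomorphisms $\alpha^q_s$, to the inclusion $M\supseteq\alpha^q_{t/n}(M)$ — gives $c_{\alpha^q}(t)=c_{\alpha^q}(t/n)^{n}\geq 2^{n}$ for every $n$, whence $c_{\alpha^q}(t)=\infty$. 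So the relative commutant index is the constant family equal to $\infty$.
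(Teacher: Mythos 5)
Your reductions of (1), (3), (5) and (6) to part (2) are sound, and your treatment of (6) via the Jones restriction together with the tower identity $c_{\alpha^q}(t)=c_{\alpha^q}(t/n)^n$ is a correct way of making precise what the paper dismisses as trivial. The problem is part (2) itself, which you do not actually prove: you reduce it to the claim that any vector $\xi$ satisfying $W(g)\xi=W^{r}(g)\xi$ for all $g\in S_t\CH_\R$ must lie in $\C\Omega$, and then state that you \emph{expect} a degree-by-degree analysis on the $q$-Fock space to deliver this. That analysis is the entire content of the statement and is not routine: the degree-$m$ component of your identity couples $\xi_{m-1}$ and $\xi_{m+1}$ through $(l(g)-r(g))\xi_{m-1}=(r(g)^{*}-l(g)^{*})\xi_{m+1}$, and controlling this recursion for all $m$ and all $q\in(-1,1)$ is essentially the factoriality/commutant problem for $q$-Gaussian algebras. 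The paper instead quotes Ricard's theorem \cite[Theorem 1]{ER} that $W(e)''$ is a maximal abelian subalgebra of $\Gamma_q(\CH_\R)$ for each unit vector $e$: any $x$ in the relative commutant commutes with $W(S_tf)$, hence lies in the MASA $W(S_tf)''$, hence $x\Omega\in E_{S_tf}=\CF_q(\C S_tf)$ for every unit vector $f$, and the intersection of these spaces over two linearly independent directions is already $\C\Omega$. Without either Ricard's theorem or a completed Fock-space computation, your proof of (2) --- and therefore of (3)--(6), which all rest on it --- is incomplete.

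A second, smaller issue concerns (4): you invoke ``the argument of (2)'' to conclude $T\Omega\in\C\Omega$, but in (2) the relevant vector is $a\Omega$ with $a\in\Gamma_q(\CH_\R)$, whereas $T\Omega$ is a priori an arbitrary vector of $\CF_q(\CH)$. So even if you complete (2) by the MASA route, you cannot literally reuse it there; you would need the statement at the level of vectors (which is exactly what your unfinished degree-by-degree claim would provide), or else the paper's route, which cites \cite[Proposition 8.11]{MS} to identify $H_{\alpha^q}(t)\Omega$ with the closure of $(\Gamma_q(\CH_\R)\cap\alpha^q_t(\Gamma_q(\CH_\R))')\Omega$ and thereby reduces (4) directly to (2).
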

\begin{proof}
(1)  Any unital normal $*$-endomorphism on a II$_1$ factor is equimodular with respect to the trace  (see \cite{BISS}), hence
$\alpha^q$ is equimodular with respect to $\tau$.

(2) Let $t\geq 0$ be fixed, and let $x \in \Gamma_q(\CH_\R) \cap {\alpha^q_t (\Gamma_q(\CH_\R))}^\prime$.  There exists 
$\xi \in \Gamma_q(\CH_\R)\Omega$ such that $x = W(\xi)$. 
Let $f \in \CH_\R$ be a vector of norm one and let $e=S_tf$.
Notice that $[x, W(e)] = [x, \alpha_t^q(W(f))] = 0$. By 
\cite[Theorem 1]{ER}, $W(e)''$ is a maximal abelian subalgebra 
of $\Gamma_q(\CH_\R)$, hence we must have that $x \in W(S_tf)''$.
Therefore  $\xi \in E_{S_tf}$,  for every vector $f \in \CH_R$ of norm one. Thus $x \in \C\cdot 1$.

(3) It follows from item (2) that 
$(\alpha^q_t(\Gamma_q(\CH_\R)) \cup ( \Gamma_q(\CH_\R) \cap \alpha^q_t (\Gamma_q(\CH_\R))') '' = \alpha^q_t(\Gamma_q(\CH_\R)) \neq \Gamma_q(\CH_\R)$. Therefore by item (1) and Remark~\ref{stateext},
we have that $\alpha^q$ is not modularly extendable.

(4) By \cite[Proposition 8.11]{MS} that 
$H_{\alpha^q}(t)\Omega$ is the closure of $(\Gamma_q(\CH_\R) \cap {\alpha^q_t (\Gamma_q(\CH_\R))}^\prime ) \Omega$. Therefore, by
Proposition~\ref{prop:superprod-in-space}, we have that $H_{\alpha^q}(t)$
is one-dimensional for every $t>0$.  Notice that $\CF_q(S_t)$ is  unit
of $\alpha^q$ and by equimodularity it is also a unit of $(\alpha^q)'$.
Hence we obtain that $H_{\alpha^q}(t) = \C \cdot \CF_q(S_t)$. It
is clear that the multiplication is given by  operator multiplication.

(5)  and (6) follow trivially from the previous items.
\end{proof}

\begin{Remark}
We note that the $q$-CCR flow provides an example of 
equimodular E$_0$-semigroup whose superproduct system is actually
a product system despite the fact that it is not modularly extendable.
\end{Remark}

\subsection{CAR flows}\label{CAR} 
Let $\CK$ be a Hilbert space and let $\CH = L^2(0, \infty)\otimes \CK$.
Let $\CF_-(\CH)$ denote the anti-symmetric Fock space
with vacuum vector $\Omega$.
For  
$f \in \CH$, let $c(f) \in \CB(\CF_-(\CH))$ be the 
creation operator given by
\[
c(f)\Omega = f, \qquad\quad
c(f) f_1 \wedge \cdots \wedge f_n = f \wedge f_1 \wedge \cdots \wedge f_n,
\qquad f_1, \dots, f_n \in \CH
\]
We note that the map $\CH \to \CB(\CF_-(\CH))$ given by
$f \mapsto c(f)$ is $\C$-linear, and it satisfies
the canonical commutation relations
\[
c(f)c(g) + c(g)c(f) = 0 \quad  \text{ and } \quad  c(f)c(g)^* + c(g)^*c(f) = \langle f, g\rangle 1, \qquad f, g \in \CH.
\]
where of course $1$ denotes the identity operator.
The CAR algebra $\CA(\CH)$ is the unital $C^*$-algebra generated
by  $\{a(f): f \in \CH\}$ in $\CB(\CF_- (\CH))$. We note that 
$||a(f)|| = ||f||$ for $f \in \CH$.
 Now suppose $R \in \CB(\CH)$ satisfies $0 \leq R \leq 1$.
Every such operator $R$ determines a unique state $\omega_R$ on $\CA(\CH)$,
called the quasi-free state with two-point function $R$,
which satisfies the following condition:
\[
\omega_R (c^*(f_m ) \cdots c^*(f_1)c(g_1 ) \cdots c(g_n )) = \delta_{mn} \det (\langle  g_i, R f_j \rangle ).
\]
We will also use the definition of the even CAR algebra. 
Let $\gamma$ be the unique unital automorphism of $\CA(\CH)$ such that
$\gamma(c(f)) = -c(f)$ for all $f\in \CH$. The even CAR algebra is
the subalgebra $\CA_e(\CH) = \{x\in \CA(\CH)  \;|\; \gamma(x) = x \}$. 
It is easy to show that the even CAR algebra is generated as a C*-algebra by the homogeneous monomials of even degree on creation
and annihilation operators.

\begin{Definition}
  Suppose that $R \in \CB(\CH)$ satisfies $0\leq R \leq 1$ and $S_t^*RS_t = R$ for all $t\geq 0$, and let $\pi_R$ be the GNS
  representation for $\omega_R$. Then the unique E$_0$-semigroup $\alpha^R$ on $M_R = \pi_R(\CA(\CH))''$  satisfying
 \[
  \alpha^R_t( \pi_R(c(f)) = \pi_R(c(S_tf)), \qquad f \in \CH, t\geq 0
 \]
 is called the \emph{CAR flow of  rank dim $\CK$} (on $M_R$) associated to the operator $R$.  
 \end{Definition}
 
 It follows from a straightforward generalization of \cite[Proposition 13.2.3]{Arv} for the context of factors of all possible types, that the CAR flow associated to $R$ is well-defined. We also note that $M_R$ is
 always a hyperfinite factor (see \cite{PoSt}).
 
In the case that $0\leq R \leq 1$ in $\CB(\CH)$ satisfies the additional
conditions that $R$ and $1-R$ are invertible, we have a convenient 
description of the GNS representation of $\omega_R$. Indeed, let $Q$ be an anti-unitary operator on $\CH$ with $Q^2=1$, and let
$\Gamma$ be the unique unitary operator on $\CF_-(\CH)$
such that $\Gamma \Omega = \Omega $ and
$\Gamma c(f) = -c(f)\Gamma$, for all $f \in \CH$.  
Then there exists a representation 
$\pi_R$ of the $C^*$-algebra $\CA(\CH)$ on the Hilbert space
$\CH_R = \CF_-(\CH) \otimes \CF_-(\CH)$ defined by the 
following formulas (see  for instance \cite{BrRo}):  for all 
$f \in \CH$,
\begin{align*}
\pi _R(1) & = 1, \\
 \pi_R (c(f)) & = c((1-R)^{1/2}f) \otimes \Gamma +  1 \otimes c^* (Q R^{1/2}f), \\
\pi_R (c^*(f)) &= c^*((1-R)^{1/2}f) \otimes \Gamma +  1 \otimes c (Q R^{1/2}f).
\end{align*}
When $R$ and $1-R$ are invertible, the representation $\pi_R$ on $\CH_R$ is the
GNS representation for $\omega_R$ with respect to the cyclic vector
$\Omega \otimes \Omega$. We denote the normal extension of the quasifree state $\omega_R$ to $M_R$ by the same symbol $\omega_R$.   We will often write $c(f)$ instead of $\pi_R(c(f))$ to lighten notation when the representation is determined by the context.

 \begin{Lemma}\label{cond-exp-car}
 Let $R \in \CB(\CH)$ be an operator such that $0\leq R \leq 1$,  $S_t^*RS_t = R$ for all $t\geq 0$, and $R$ and $1-R$ are
 invertible. Suppose that $\CK$ is a closed subspace of $\CH$
 such that $R \CK \subseteq \CK$. Then there exists a unique normal $\omega_R$-preserving conditional expectation of $M_R$ onto
 $\pi_R(\CA(\CK))''$.
 \end{Lemma}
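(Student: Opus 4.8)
The plan is to realize the conditional expectation concretely on the Fock space picture of the GNS representation $\pi_R$ on $\CH_R = \CF_-(\CH)\otimes\CF_-(\CH)$, and then invoke Theorem~\ref{equi} in the guise appropriate for states (or, more directly, Takesaki's theorem). First I would note that the hypothesis $R\CK\subseteq\CK$ together with self-adjointness of $R$ gives $R\CK^\perp\subseteq\CK^\perp$, so $\CH=\CK\oplus\CK^\perp$ reduces $R$; consequently $R$ also commutes with the projection $p_\CK$ onto $\CK$, and $(1-R)^{1/2}$ and $QR^{1/2}Q$ behave compatibly with this decomposition (one must check $Q\CK$ is related appropriately — more on this below). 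The natural candidate subalgebra $\pi_R(\CA(\CK))''$ sits inside $M_R$ as the von Neumann algebra generated by $\{\pi_R(c(f)): f\in\CK\}$.

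The key computational step is to show that $\pi_R(\CA(\CK))''$ is globally invariant under the modular automorphism group $\sigma^{\omega_R}_t$. For this I would recall the standard fact that, when $R$ and $1-R$ are invertible and $S_t^*RS_t=R$ is dropped to just the reducing hypothesis, the modular automorphism group of $\omega_R$ on $M_R$ is the quasi-free (Bogoliubov) automorphism group implemented by the one-particle operator $R^{it}(1-R)^{-it}$ — explicitly $\sigma^{\omega_R}_t(\pi_R(c(f))) = \pi_R(c(R^{it}(1-R)^{-it}f))$ (see, e.g., the discussion of quasi-free states in \cite{BrRo}). Since $R$ reduces $\CK$, so does every bounded Borel function of $R$, in particular $R^{it}(1-R)^{-it}$; hence $\sigma^{\omega_R}_t$ maps the set of generators $\{\pi_R(c(f)):f\in\CK\}$ into itself, and therefore preserves $\pi_R(\CA(\CK))''$. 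Once invariance under the modular group is established, Takesaki's theorem \cite[Section 3, p.~309]{Ta} yields the existence of a unique $\omega_R$-preserving normal conditional expectation $E:M_R\to\pi_R(\CA(\CK))''$; uniqueness and normality are part of Takesaki's statement, and the $\omega_R$-preserving property is exactly the content of that theorem in the presence of modular invariance.

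The main obstacle I anticipate is bookkeeping with the anti-unitary $Q$ in the second tensor factor: the formula for $\pi_R$ involves $QR^{1/2}f$, and to keep $\pi_R(\CA(\CK))''$ a genuine subalgebra one wants the second-factor pieces to also respect the decomposition, which requires knowing something about how $Q$ interacts with $\CK$ — or, alternatively, circumventing this by working purely abstractly. In fact the cleanest route avoids $Q$ entirely: the existence of the modular group as the Bogoliubov group implemented by $R^{it}(1-R)^{-it}$ is representation-independent and follows from the KMS characterization of $\omega_R$ directly at the level of the CAR algebra (one verifies the KMS condition for the candidate automorphism group against monomials in $c(f),c(g)^*$ using the Pfaffian/determinant formula for $\omega_R$, as is done classically). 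With that in hand, the reducing property $R\CK\subseteq\CK$ immediately gives invariance of $\CA(\CK)$, hence of its weak closure in $M_R$, and Takesaki's theorem finishes the proof without ever needing to track $Q$. I would therefore present the argument in that order: (i) identify $\sigma^{\omega_R}_t$ as the Bogoliubov group with symbol $R^{it}(1-R)^{-it}$; (ii) observe $R$ reduces $\CK$, so this symbol does too, whence $\sigma^{\omega_R}_t$ preserves $\pi_R(\CA(\CK))''$; (iii) apply Takesaki's theorem to obtain the unique normal $\omega_R$-preserving conditional expectation.
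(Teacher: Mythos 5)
Your proposal is correct and follows essentially the same route as the paper: identify the modular automorphism group of $\omega_R$ as the Bogoliubov group $\sigma_t(c(f)) = c(R^{it}(1-R)^{-it}f)$ via the KMS condition, observe that $R\CK\subseteq\CK$ (hence $\CK$ reducing for $R$) forces invariance of $\pi_R(\CA(\CK))''$ under this group, and conclude by Takesaki's theorem. Your extra discussion of the anti-unitary $Q$ is a non-issue, as you yourself conclude, since the argument is carried out representation-independently exactly as in the paper.
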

 \begin{proof}
  Let $\{ \sigma_t\}_{t \in \R }$ be the 
  modular automorphism group on $M_R$ with respect to the normal state $\omega_R$. 
  Then the KMS condition for the modular automorphism group implies that (see \cite[Example 5.3.2]{BrRo}),
\[ 
\sigma_t(c(f)) = c(R^{it} (1-R)^{-it}f), \qquad f \in \CH. 
\]
Since $R\CK \subseteq \CK$, we have that $\sigma_t(\pi_R(\CA(\CK))) \subseteq \pi_R(\CA(\CK))$. Now it follows from 
Takesaki's theorem (see \cite[Section 3, p.309]{Ta}) that there exists a unique  normal $\omega_R$-preserving conditional
expectation from $M_R$ onto $\pi_R(\CA(\CK)))''$.
 \end{proof}

The next three lemmas are certainly known to the experts, 
however we did not find a direct reference in the literature. Hence we provide their proofs here for the convenience of the reader. The 
authors thank M. Izumi for pointing out a slick proof for Lemma~\ref{outer}. 
Lemma~\ref{rel} generalizes a result in \cite{Bikram}, and the proof below uses a
crossed product idea suggested by an anonymous referee of that paper.

\begin{Lemma}\label{outer}
Let $\CK$ be a Hilbert space and $\gamma$ be the period two automorphism 
of 
 $\CA(\CK)$ given by $\gamma (c(f)) = -c(f)$ for $f \in \CK$.
Let $R \in \CB(\CK)$ be a positive contraction, let $\omega_R $ be the quasi-free state 
of $\CK$ of $R$, and let $\pi_R$ be the GNS representation for $\omega_R$. Since $\gamma$
preserves $\omega_R$, 
it extends to an automorphism $\gamma_R$ on the weak closure $M_R = \pi_R (\CA(\CK))''$.
Then the automorphism $\gamma_R$ is inner if and only if $\Tr(R - R^2) < \infty $. 
\end{Lemma}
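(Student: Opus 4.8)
The plan is to pass through the GNS representation $\pi_R$ on $\CH_R = \CF_-(\CH)\otimes \CF_-(\CH)$ described above, and to use the well-known fact (essentially due to Powers--Størmer, see \cite{PoSt}) that $M_R = \pi_R(\CA(\CK))''$ is a factor, together with the explicit formulas $\pi_R(c(f)) = c((1-R)^{1/2}f)\otimes \Gamma + 1\otimes c^*(QR^{1/2}f)$. First I would observe that the automorphism $\gamma_R$ is spatially implemented on $\CH_R$ by $\Gamma\otimes\Gamma$, i.e.\ $\gamma_R = \mathrm{Ad}(\Gamma\otimes\Gamma)$, since conjugating $\pi_R(c(f))$ by $\Gamma\otimes\Gamma$ flips the sign of both summands. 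Hence $\gamma_R$ is inner in $M_R$ if and only if there is a unitary $u\in M_R$ with $u^*(\Gamma\otimes\Gamma) \in M_R'$; equivalently, $\Gamma\otimes\Gamma$ lies in $M_R \cdot M_R' = M_R \vee M_R'$ with the $M_R$-part being unitary. The cleanest route is: $\gamma_R$ is inner $\iff$ the parity grading operator $\Gamma\otimes\Gamma$ differs from a unitary in $M_R'$ by a unitary in $M_R$.

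For the hard direction I would use the crossed-product / quasi-equivalence idea attributed to the anonymous referee. Consider the ``doubled'' picture: $\gamma_R$ is inner iff the two representations $\pi_R$ and $\pi_R\circ\gamma$ of $\CA(\CK)$ are unitarily equivalent \emph{via a unitary in $\pi_R(\CA(\CK))''$}, which (since $\gamma$ preserves $\omega_R$, so they are already unitarily equivalent) amounts to a statement about the relative position of $M_R$ and the implementing unitary. Equivalently, writing $c(f)\mapsto -c(f)$ as conjugation by the ``second quantization'' of $-1$ on the one-particle space, innerness of $\gamma_R$ is governed by whether the formal operator implementing it, which in the GNS space factors through second quantization of $-1$ acting compatibly with $R$, is quasi-contained in $M_R$. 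The standard computation (parallel to Shale's criterion for Bogoliubov automorphisms, see \cite{BrRo}) reduces this to the trace-class condition on the relevant ``difference'' operator, and for the parity automorphism $\gamma$ that operator turns out to be built from $R^{1/2}(1-R)^{1/2}$, so the criterion becomes $\Tr\big(R^{1/2}(1-R)^{1/2}\big)^2 < \infty$, i.e.\ $\Tr(R-R^2) < \infty$.

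Concretely, for $(\Leftarrow)$, assuming $\Tr(R-R^2) < \infty$, I would exhibit the implementing unitary explicitly. Let $\{e_j\}$ be an orthonormal basis of eigenvectors of $R$ with eigenvalues $r_j\in[0,1]$; then $\sum_j r_j(1-r_j) < \infty$. Using the identification $\CH_R \cong \CF_-(\CH)\otimes\CF_-(\CH)$ with cyclic vector $\Omega\otimes\Omega$, the vector $\Omega\otimes\Omega$ is (up to normalization) a product of terms $\big((1-r_j)^{1/2} + r_j^{1/2}\, c^*(e_j)\otimes c^*(Qe_j)\big)$ acting on $\Omega\otimes\Omega$, and one checks that the element $u = \prod_j\big((1-2r_j) + 2 r_j^{1/2}(1-r_j)^{1/2}\,\pi_R(c(e_j) + c^*(e_j))\cdot(\text{phase})\big)$ — a convergent infinite product in $M_R$ precisely because $\sum r_j(1-r_j)<\infty$ — is a unitary in $M_R$ with $u\,\pi_R(x)\,u^* = \pi_R(\gamma(x))$ for all $x\in\CA(\CK)$. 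For $(\Rightarrow)$, assuming $\Tr(R-R^2) = \infty$, I would argue by contradiction: if $u\in M_R$ implemented $\gamma_R$, then $u^*(\Gamma\otimes\Gamma)$ would be a unitary in $M_R'$, and testing against $\Omega\otimes\Omega$ and against the monomials $c^*(e_j)c^*(e_k)$ would force a convergence condition on $\sum r_j(1-r_j)$, contradicting divergence; alternatively one invokes that in the divergent case $\pi_R$ and $\pi_R\circ\gamma$, while equivalent, are not equivalent by any unitary in $M_R$ because the relevant GNS vectors lie in inequivalent ``sectors'' (this is the crossed-product formulation: $M_R\rtimes_{\gamma_R}\Z_2$ is a factor iff $\gamma_R$ is outer, which one detects via the trace condition).

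The main obstacle I anticipate is making the $(\Rightarrow)$ direction rigorous without a lengthy detour into Bogoliubov/Shale--Stinespring theory: one must rule out innerness, and the slick way is to show that outerness of $\gamma_R$ is equivalent to $M_R\rtimes_{\gamma_R}\Z_2$ being a factor, then compute that crossed product (it is again generated by a CAR-type algebra with a modified two-point function) and check its centre using the trace condition. Getting the exact constant in the trace criterion — i.e.\ that it is $\Tr(R-R^2)$ and not some other expression in $R$ — will require care with the explicit one-particle implementer; the expected computation is that the Hilbert--Schmidt norm squared of the off-diagonal part of the Bogoliubov implementer for $\gamma$ in the $\omega_R$-GNS space equals (a constant times) $\Tr(R(1-R))$.
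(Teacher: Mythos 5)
Your proposal has the right circle of ideas (Powers--St{\o}rmer quasi-equivalence, crossed products, type~I behaviour in the convergent case) but it does not contain a proof of either direction, and it misses the one device that makes the hard direction work. For $(\Leftarrow)$ the paper's argument is a single line: if $\Tr(R-R^2)<\infty$ then $M_R$ is a type~I factor by \cite[Lemma 5.3]{PoSt}, and every automorphism of a type~I factor is inner. Your explicit infinite product is an unnecessary detour, and the single-mode factor you wrote down is not a unitary: with $a=\pi_R(c(e_j))$ and $r=r_j$ one has $\bigl((1-2r)+2\sqrt{r(1-r)}\,(a+a^*)\bigr)^2 = 1 + 4(1-2r)\sqrt{r(1-r)}\,(a+a^*)$, which is not $1$ unless $r\in\{0,\tfrac12,1\}$. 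The correct single-mode implementer is $1-2\,c^*(e_j)c(e_j)$, and even then the infinite product converges only after inserting the signs $\mathrm{sgn}(1-2r_j)$; this route can be repaired, but it is strictly harder than the type~I observation.

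The $(\Rightarrow)$ direction is where the content lies, and your text offers three alternative strategies (testing $u^*(\Gamma\otimes\Gamma)$ against vectors, factoriality of the crossed product, a Shale--Stinespring-type computation) without carrying any of them out; the step that is supposed to produce the precise quantity $\Tr(R-R^2)$ is only asserted. The missing idea is the purification/doubling trick: realize $M_R\cong \pi_{E_R}(\CA(p\CK^2))''$, where $\CK^2=\CK\oplus\CK$, $p$ is the projection onto the first summand, and $E_R$ is the $2\times 2$ operator matrix with diagonal entries $R$ and $1-R$ and off-diagonal entries $\sqrt{R(1-R)}$, which is a \emph{projection}. An inner implementer $u$ of $\gamma_R$ is fixed by $\gamma_R$, hence even, hence commutes with $\CA((1-p)\CK^2)$, so $\Ad(u)\circ\pi_{E_R}$ is again a Fock-type quasi-free representation, namely $\pi_F$ with $F=(1-2p)E_R(1-2p)$. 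Since $\pi_{E_R}$ and $\pi_F$ are then unitarily equivalent representations associated to projections, \cite[Theorem 2.8]{PoSt} forces $E_R-F$ to be Hilbert--Schmidt; its nonzero blocks are $\pm 2\sqrt{R(1-R)}$, giving exactly $\Tr(R-R^2)<\infty$. Without some such reduction to the equivalence criterion for \emph{pure} quasi-free states, the trace condition does not fall out of your sketch. A further (minor) gap: your argument leans on the explicit Araki--Wyss form of $\pi_R$ on $\CF_-(\CH)\otimes\CF_-(\CH)$, which is the GNS representation only when $R$ and $1-R$ are invertible, whereas the lemma is stated for an arbitrary positive contraction; the purification argument avoids this hypothesis.
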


\begin{proof}
If $\Tr(R - R^2) < \infty $ the von Neumann algebra $M_R$ is a type I factor by \cite[Lemma 5.3]{PoSt}, hence
every automorphism of $M_R$ is inner.

Conversely, suppose that $\gamma_R$ is inner.  Then there exists a  unitary $u \in M_R$ such that $u^2=1$
satisfying $\gamma_R = \text{Ad}(u)$. Note that $\gamma_R(u) = u$
and therefore it is even in the sense that $u \in \pi_R(\CA_e(\CK))''$.
We consider the purification of 
 $\omega_R$ as in \cite{PoSt}. Let 
\[
 E_R =
 \begin{pmatrix}
  R & \sqrt{R(1-R)} \\
  \sqrt{R(1-R)} & 1-R 
 \end{pmatrix}
\qquad \text{and} \qquad
 p = 
  \begin{pmatrix}
  1 & 0 \\
  0 & 0
 \end{pmatrix}
 \]
which are projections in $\CB(\CK^2)$ where $\CK^2 =  \CK \oplus \CK$. Then we have that $M_R$
is isomorphic to $\pi_{E_R}(\CA(p\CK^2))''$. Therefore, using this isomorphism, there exists $u \in \pi_{E_R}(\CA(p\CK^2))''$ such that
$u^2=1$ and $uc(f)u = -c(f)$ for every $f \in p\CK^2$, and furthermore
$u$ is even in the sense that $u \in \pi_{E_R}(\CA_e(p\CK^2))''$. 
Let $\Ad(u)$ be the associated automorphism of $\pi_{E_R}(\CA(\CK^2))''$. 
Since $u$ is a even, i.e.\ $u \in \pi_{E_R}(\CA_e(p\CK^2))''$, 
we have that, 
\[
\Ad(u)(c(f)) = \begin{cases}   -c(f), & \quad f \in p\CK^2,\\
  c(f), &  \quad f \in (1-p) \CK^2,
\end{cases} 
\]
Hence it is  straightforward to check that $\Ad(u) \circ \pi_{E_R}$ can be identified with $\pi_{F}$, where 
\[
 F =
 \begin{pmatrix}
  R & -\sqrt{R(1-R)} \\
  -\sqrt{R(1-R)} & 1-R 
 \end{pmatrix}
 \]
Since $\pi_{E_R}$ and $\pi_{F}$ are unitarily equivalent, by \cite[Theorem 2.8]{PoSt}
we have that $E_R - F$ is 
Hilbert-Schmidt, that is $\Tr( R- R^2) < \infty$.  
\end{proof}

\begin{Lemma}\label{relative}
Let $\FH$ be a Hilbert space and 
let $R \in \CB(\FH)$ be a positive contraction. Let $\omega_R$ be the quasi-free state associated to $R$, and let $\pi_R$ be its GNS representation.
Let $B_e = \pi_R(\CA_e( \FH))''$ and let $B = \pi_R(\CA( \FH))''$. 
Then $B_e$ is a factor, and 
we have in addition that  $\Tr(R - R^2) =\infty$, then
$B_e' \cap B = \C 1$.
\end{Lemma}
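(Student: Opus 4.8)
The plan is to prove Lemma~\ref{relative} in two stages: first that $B_e$ is a factor, and then that $B_e' \cap B = \C 1$ when $\Tr(R-R^2) = \infty$, using a crossed product identification of the pair $B_e \subseteq B$.

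For the first stage, recall that $B = \pi_R(\CA(\FH))''$ carries the period-two automorphism $\gamma_R$ with fixed point algebra $B_e$, and that $B$ is generated by $B_e$ together with any odd unitary (e.g.\ the image of a suitable self-adjoint combination $c(g)+c(g)^*$ with $\|g\|$ chosen so that this is unitary). Thus $B$ is the crossed product $B_e \rtimes_{\gamma_R|_{B_e'}} \Z/2$ in the spatial sense, or at least $B$ is generated by $B_e$ and one odd unitary $v$ with $v^2 \in B_e$ and $\Ad(v)|_{B_e} = \gamma_R|_{B_e}$. The center $Z(B_e)$ is globally invariant under $\Ad(v)$. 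A standard fact about quasi-free states is that $B = M_R$ is a factor (it is the hyperfinite factor by \cite{PoSt}); I would invoke this. Then $Z(B_e) \subseteq B_e' \cap B_e$, and since $Z(B_e)$ is $\Ad(v)$-invariant and abelian, any element $z \in Z(B_e)$ commutes with $B_e$ and with $v$, hence $z \in Z(B) = \C 1$. Therefore $B_e$ is a factor. (Alternatively, one can argue directly from the known fact that $B_e$ is the hyperfinite factor, but deducing it from factoriality of $B$ plus invariance of the center is cleaner.)

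For the second stage, the key observation is that $B_e' \cap B$ is a globally $\gamma_R$-invariant abelian von Neumann algebra — abelian because $B_e$ is a factor containing a hyperfinite subfactor with the same commutant properties, or more precisely because $B_e' \cap B$ is contained in $B_e' \cap B$ and one checks directly it is abelian (any two elements of $B_e' \cap B$, being in $B$, either commute or, using that $B$ is generated by $B_e$ and an odd element, their commutator lies in $B_e' \cap B$ and is "odd-degree", which can only happen if it vanishes). So write $B_e' \cap B = C \oplus \gamma_R$-pieces; the even part is $B_e' \cap B_e = Z(B_e) = \C 1$ by stage one. It remains to rule out a nonzero odd element $w \in B_e' \cap B$ with $w^* w \in B_e$, $\gamma_R(w) = -w$, $\Ad(w)|_{B_e} = \mathrm{id}$. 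Such a $w$, suitably normalized to a unitary, would implement $\gamma_R$ as an inner automorphism — more precisely, $w$ commutes with $B_e$ and $wvw^* = -v$-type relations force $v' = wv$ (or $vw$) to be an even unitary in $B$ implementing $\gamma_R$ on... — this is exactly where I would reduce to Lemma~\ref{outer}. The cleanest route: if $B_e' \cap B \neq \C 1$, it contains a nonzero odd partial isometry, and normalizing gives an odd unitary $w \in B$ commuting with $B_e$; then $\Ad(w) = \gamma_R$ on all of $B$ (since $\Ad(w)$ is identity on $B_e$ and sends odd elements to their negatives, matching $\gamma_R$), so $\gamma_R$ is inner, contradicting Lemma~\ref{outer} under the hypothesis $\Tr(R-R^2)=\infty$.

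The main obstacle I anticipate is the careful bookkeeping showing that $B_e' \cap B$ is abelian and that a nontrivial element produces an odd unitary implementing $\gamma_R$ globally. The $\Z/2$-grading on $B$ (even and odd parts for $\gamma_R$) makes this work: any $x \in B$ decomposes as $x = x_0 + x_1$ with $x_0$ even, $x_1$ odd, and $x \in B_e' \cap B$ iff both $x_0, x_1 \in B_e' \cap B$; the even part lands in $Z(B_e) = \C1$; so the content is entirely in the odd part, and the odd part of $B_e' \cap B$ is a $B_e$-bimodule on which left and right $B_e$-actions agree, which since $B_e$ is a factor forces it to be $0$ or "one-dimensional over $B_e$" — in the latter case the generator is a unitary (up to scalar) commuting with $B_e$, hence implements $\gamma_R$, contradicting Lemma~\ref{outer}. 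I would present this grading argument carefully since it is the heart of the matter, and cite Lemma~\ref{outer} for the final contradiction.
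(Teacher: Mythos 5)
Your overall strategy---decompose $B=B_e\oplus B_e u$ along the $\Z/2$-grading, kill the even part of $B_e'\cap B$ by factoriality of $B_e$, and kill the odd part by producing an odd unitary commuting with $B_e$ and appealing to Lemma~\ref{outer}---is essentially the paper's (the paper phrases the grading as the crossed product $B=B_e\rtimes_{\Ad(u)}\Z/2\Z$ and kills the odd part via freeness of $\Ad(u)|_{B_e}$). However, two of your key steps are wrong as written.

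First, the factoriality of $B_e$. You argue that any $z\in Z(B_e)$ ``commutes with $B_e$ and with $v$'' because $Z(B_e)$ is $\Ad(v)$-invariant. Global invariance of the algebra $Z(B_e)$ under $\Ad(v)$ does not make its individual elements commute with $v$; $\Ad(v)$ may permute $Z(B_e)$ nontrivially. Concretely, for the Fock state $R=0$ one has $B=\CB(\CF_-(\FH))$ and $B_e$ is the direct sum of the algebras of all bounded operators on the even and odd particle-number subspaces, which is not a factor: $Z(B_e)$ is two-dimensional and $\Ad(v)$ swaps its minimal projections. Indeed, whenever $\gamma_R$ is inner it is implemented by an even self-adjoint unitary $V\neq\pm1$ lying in $Z(B_e)$, so factoriality of $B_e$ genuinely requires the hypothesis $\Tr(R-R^2)=\infty$; the correct route is Lemma~\ref{outer} together with the standard fact that the fixed-point algebra of an outer action of a finite group on a factor is a factor. (The paper's own one-line justification of this step is equally terse, but the point is that the trace hypothesis must enter here, and your argument does not use it.)

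Second, the odd part. If $w$ is an odd unitary commuting with $B_e$, it is false that $\Ad(w)$ negates odd elements. On the contrary: $wu$ is even, hence lies in $B_e$, hence commutes with $w$; from $w(wu)=(wu)w$ one gets $wu=uw$, so $w$ commutes with $B_e$ and with $u$, hence with all of $B=B_e+B_eu$, i.e.\ $w\in Z(B)=\C 1$. That is already the desired contradiction, since a nonzero scalar cannot be odd---but it is not obtained via ``$\Ad(w)=\gamma_R$, hence $\gamma_R$ is inner,'' which is the step you actually wrote and which fails. Note also that normalizing a nonzero odd element of $B_e'\cap B$ to a unitary uses $z^*z\in Z(B_e)=\C1$, so it is only available after factoriality of $B_e$ has been established; that is where Lemma~\ref{outer} is really consumed, not in the odd-part analysis where you place it.
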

\begin{proof}
 Let $\gamma \in \mathrm{Aut}(B)$ be given by
$\gamma(\pi_R(c(g)))=-\pi_R(c(g))$ for $g\in K$. Note that $B$ is
a factor and $B_e$ is the fixed point algebra of $B$ under $\gamma$,
which has period two, therefore $B_e$ is a factor. 
 Let $f \in \FH$ with $|| f|| = 1$, consider
$ u = \pi_R(c(f)) + \pi_R(c(f))^*$ and notice that  
$\gamma(u)=-u$ and $u^2=1$. Let $ \sigma = \Ad(u)$ on $B_e$. It is
straightforward to check that $B$ is isomorphic
to the crossed product $B_e \rtimes_\sigma \Z/{2\Z}$.  Moreover,  $\gamma$ implements the dual action of $\sigma$ on $B_e$ via this isomorphism.
When $\Tr( R- R^2) = \infty$,  by Lemma \ref{outer} we have that $\gamma$ is outer, hence $\sigma$ is also outer on $B_e$. 
Since $B_e$ is a factor, it follows that $\sigma$ acts freely on $B_e$. 
Every element of $x \in B$ can be written uniquely as $y + z u$ for
$y, z \in B_e$. Hence a straightforward computation shows that 
$B_e' \cap B = \C I$. 
\end{proof}

\begin{Lemma}\label{rel}
Let $R \in \CB(\CH)$ be an operator such that $0\leq R \leq 1$   
and $R$ and $1-R$ are invertible. Let $\CK$ be a closed subspace
of $\CH$ such that $R\CK \subseteq \CK$, and suppose that
$\mathop{Tr}(R|_\CK - R|^2_\CK ) = \infty$. Let 
$\CA_e(\CK^\perp)$ be the even part of $\CA(\CK^\perp)$.
 Then
\[
M_R \cap \pi_R(\CA(\CK))' = \pi_R(\CA_e(\CK^\perp))''.
\]
\end{Lemma}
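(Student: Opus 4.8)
The plan is to prove the two inclusions separately; the inclusion $\pi_R(\CA_e(\CK^{\perp}))'' \subseteq M_R\cap\pi_R(\CA(\CK))'$ is the easy one and the reverse is the heart of the matter. First I set up notation. Since $R$ is self\-adjoint and $R\CK\subseteq\CK$, also $R\CK^{\perp}\subseteq\CK^{\perp}$, so $R=R_1\oplus R_2$ with $R_i$ the compression of $R$ to $\CK$, resp.\ $\CK^{\perp}$, and each of $R_1,1-R_1,R_2,1-R_2$ is invertible; note $\Tr(R_1-R_1^2)=\infty$ by hypothesis, hence also $\Tr(R-R^2)=\infty$. Write $N=\pi_R(\CA(\CK))''$, $P=\pi_R(\CA(\CK^{\perp}))''$, $N_e=\pi_R(\CA_e(\CK))''$, $P_e=\pi_R(\CA_e(\CK^{\perp}))''$, and let $\gamma_R$ be the normal extension to $M_R$ of the grading automorphism $\gamma$, which exists because $\omega_R$ is $\gamma$\-invariant. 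The easy inclusion follows from the canonical anticommutation relations: a homogeneous $b\in\CA_e(\CK^{\perp})$ and any homogeneous $a\in\CA(\CK)$ satisfy $ab=(-1)^{|a||b|}ba=ba$ since $|b|$ is even, so $\CA_e(\CK^{\perp})$ commutes with $\CA(\CK)$ inside $\CA(\CH)$, and one passes to $\pi_R$ and weak closures.

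For the reverse inclusion the plan is to exploit the crossed\-product structure, as in Lemma~\ref{relative}. Fix a unit vector $f_0\in\CK$ and set $v=\pi_R(c(f_0)+c(f_0)^{*})\in N$, a self\-adjoint unitary with $\gamma_R(v)=-v$. Then $N=N_e+N_e v\cong N_e\rtimes_{\Ad v}\Z/2\Z$, $M_R=\pi_R(\CA_e(\CH))''+\pi_R(\CA_e(\CH))''v\cong\pi_R(\CA_e(\CH))''\rtimes_{\Ad v}\Z/2\Z$, and in both cases $\gamma_R$ acts as the dual action. Because $\Tr(R_1-R_1^2)=\infty$, Lemma~\ref{relative} applied to $\CK$ and $R_1$ shows that $N_e$ is a factor and $N_e'\cap N=\C 1$; equivalently, $\Ad v$ restricts to a properly outer automorphism of the factor $N_e$. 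Equivalently again, passing to the tensor realization of $(M_R,\omega_R)$ coming from the splitting $\CH=\CK\oplus\CK^{\perp}$ — namely $\CH_R\cong\CH_{R_1}\otimes\CH_{R_2}$ with $\pi_R(a)=\pi_{R_1}(a)\otimes 1$ for $a\in\CA(\CK)$ and $\pi_R(b)=\Gamma^{|b|}\otimes\pi_{R_2}(b)$ for homogeneous $b\in\CA(\CK^{\perp})$, where $\Gamma$ implements the grading of $N_1:=\pi_{R_1}(\CA(\CK))''$ — the hypothesis forces $\Gamma\notin N_1$, by Lemma~\ref{outer}. Now take $x\in M_R\cap\pi_R(\CA(\CK))'$. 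Using the $\omega_R$\-preserving conditional expectations $E_{\CK}\colon M_R\to N$ and $E_{\CK^{\perp}}\colon M_R\to P$ (provided by Takesaki's theorem exactly as in Lemma~\ref{cond-exp-car}, since $\sigma^{\omega_R}_t$ preserves $\pi_R(\CA(\CK))$ and $\pi_R(\CA(\CK^{\perp}))$), the bimodule property gives $E_{\CK}(x)\in N_e'\cap N=\C 1$. Decomposing $x$ along $M_R=\pi_R(\CA_e(\CH))''+\pi_R(\CA_e(\CH))''v$ and intertwining with $v$ and with $N_e$, the "off\-diagonal" component of $x$ satisfies an intertwining relation for the properly outer automorphism $\Ad v$ of $N_e$, which forces it to vanish; with the vanishing of $E_{\CK}(x-E_{\CK^{\perp}}(x))$ and $E_{\CK^{\perp}}(x-E_{\CK^{\perp}}(x))$ this yields $x=E_{\CK^{\perp}}(x)\in P$. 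Finally, an element of $P$ that commutes with the odd unitary $v$ must have vanishing odd part, since the odd part of $P$ anticommutes with $v$ while commuting with it; hence $x\in P_e$, completing the proof.

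The main obstacle is the middle step: controlling an element of $M_R\cap\pi_R(\CA(\CK))'$ that is not a priori contained in $\pi_R(\CA(\CK^{\perp}))''$ — equivalently, ruling out "$\Gamma$\-twisted" odd contributions. It is precisely here that the hypothesis $\Tr(R|_{\CK}-R|_{\CK}^2)=\infty$ is indispensable: it is exactly the condition ensuring, via Lemma~\ref{outer}, that the grading automorphism of $\pi_R(\CA(\CK))''$ is properly outer (so no implementing unitary survives in $N$, and $N_e'\cap N=\C 1$). Without it — for instance in the Fock case $R|_{\CK}=0$, where $\Tr(R|_{\CK}-R|_{\CK}^2)=0$ and the grading of $\pi_R(\CA(\CK))''$ is inner — the relative commutant is strictly larger than $\pi_R(\CA_e(\CK^{\perp}))''$, so the statement genuinely requires this assumption.
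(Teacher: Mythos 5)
Your easy inclusion is fine, and so is your final step (an element of $\pi_R(\CA(\CK^\perp))''$ commuting with the odd unitary $v$ has vanishing odd part). The gap is in the middle, at exactly the crucial point. You decompose $M_R=\pi_R(\CA_e(\CH))''+\pi_R(\CA_e(\CH))''v$ with $v=\pi_R(c(f_0)+c(f_0)^*)$, $f_0\in\CK$, write $x=y+zv$, and claim that the relation $z\,\Ad v(a)=az$ for all $a\in N_e$ forces $z=0$ because $\Ad v$ is properly outer on the factor $N_e$. That implication holds only for intertwiners $z$ lying \emph{in} $N_e$; here $z$ lives in the much larger algebra $\pi_R(\CA_e(\CH))''$, and proper outerness of $\Ad v|_{N_e}$ as an automorphism of $N_e$ says nothing about intertwiners outside $N_e$. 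The claim is in fact false as stated: for any unit vector $g\in\CK^\perp$, the element $z_0=\pi_R\bigl((c(g)+c(g)^*)(c(f_0)+c(f_0)^*)\bigr)$ is a nonzero element of $\pi_R(\CA_e(\CH))''$ with $z_0\,\Ad v(a)=az_0$ for all $a\in N_e$ (use $v^2=1$ and the fact that $c(g)+c(g)^*$ commutes with $N_e$), precisely because $\Ad v|_{N_e}$ is implemented by the \emph{even} unitary $\pi_R\bigl((c(f_0)+c(f_0)^*)(c(g)+c(g)^*)\bigr)$. To exclude such $z$ you would need to compute $N_e'\cap\pi_R(\CA_e(\CH))''$, or to combine the intertwining relation with the extra condition $\Ad v(z)=z$ coming from $[x,v]=0$ — and controlling that relative commutant is essentially the content of the lemma itself, so the argument is circular where it matters. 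The remarks about $E_\CK$, $E_{\CK^\perp}$ and $\Gamma\notin N_1$ do not repair this; note also that $E_\CK(x-E_{\CK^\perp}(x))=E_{\CK^\perp}(x-E_{\CK^\perp}(x))=0$ holds automatically and does not force $x=E_{\CK^\perp}(x)$.

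The paper sets up the crossed product the other way around, and this is what makes the intertwining argument close. It takes $P=\pi_R(\CA_e(\CK^\perp))''$, which genuinely commutes with $N$, so $(N\cup P)''\cong N\bar{\otimes}P$, and adjoins one odd unitary $u=\pi_R(c(f)+c(f)^*)$ with $f\in\CK^\perp$, giving $M_R\cong(N\bar{\otimes}P)\rtimes\Z/2\Z$ where $\Ad u$ acts on $N$ as the grading automorphism (outer by Lemma~\ref{outer}). Writing $x=y+zu$ with $y,z\in N\bar{\otimes}P$, the coefficients now live in a tensor product: $y\in N'\cap(N\bar{\otimes}P)=1\otimes P$, and the intertwining relation restricted to $N_e$ puts $z\in(\pi_R(\CA_e(\CK))'\cap N)\otimes P=1\otimes P$ by Lemma~\ref{relative}; testing against an odd self-adjoint unitary of $N$ then gives $z=-z=0$. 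If you want to salvage your route, you must choose the coefficient algebra of your $\Z/2\Z$-decomposition to be one whose relative commutant with $N_e$ (or $N$) you can actually compute, which is what the tensor splitting over $\CK\oplus\CK^\perp$ provides.
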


\begin{proof}
We may assume that $\CK \neq \CH$. Let $N= \pi_R(\CA(\CK))''$ and let
$P = \pi_R (\CA_e(\CK^\perp) )''$. It is clear that $N$ and $P$ commute. By Lemma~\ref{cond-exp-car}, there
exists a conditional expectation $E$ from $M_R$ onto $N$ which is
normal and $\omega_R$-preserving, and it is faithful since $\omega_R$
is faithful when $R$ and $1-R$ are invertible.  It follows from 
\cite[Theorem 9.12, p.124]{SS} that $(N\cup P)''$ is isomorphic to $N \bar{\otimes} P$. 

Notice that $N$ and $P$ are canonically identified with their cutdowns
by the projection onto the closure of $\pi_R(\CA(\FH))\Omega \otimes \Omega$  when $\FH=\CK$ and $\FH=\CK^\perp$, respectively.
Hence it
 follows from Lemma~\ref{relative} that
 $N$ and $P$ are subfactors of $M_R$. 
Let $f \in \CK^\perp$ be a fixed vector with $\|f\| = 1$, and set $u = \pi_R (c(f ) + c^*(f ))$. Then $u$ is a self-adjoint unitary, so
$u^2=1$, and it normalizes $N$ and $P$. We denote by $\gamma$ the restriction of $\Ad(u)$ to $(N\cup P)'' \cong N \bar{\otimes} P$,
and let $\gamma_N$  and $\gamma_P$ be its restrictions to $N$ and $P$,
respectively. Note that 
\[
\gamma_N(\pi_R (c(f))) = -\pi_R (c(f)), \qquad f \in  \CK. 
\]
Since $\Tr(R|_\CK - R|^2_\CK ) = \infty$, by Lemma~\ref{outer}, we have that $\gamma_N$ is outer (relative to $N$). Hence $\gamma \cong \gamma_N \otimes \gamma_P$
must be outer (recall that the tensor product of automorphisms is inner if and only if both automorphisms are inner).

Since ${\pi_R (A(\CK^\perp) )}^{\prime \prime} = P + P u$, we see that $M_R$ is generated by $(P \cup N)''$ and
$u$. Moreover, notice that $M_R$ is isomorphic to the crossed product
$(N \bar{\otimes} P)\rtimes_\gamma \Z/2\Z$. 
In particular, every $x \in M_R$ is uniquely expressed as $x = y + zu$ with $y, z \in N \bar{\otimes} P$ . It remains to show that 
$M \cap N^\prime  = P$. Let $ x = y + zu$ with $y, z \in N \bar{\otimes} P$, and suppose that 
$xa = ax $ for all $a \in N$. Then we have that $ya = ay$ and $z\gamma(a) = a z$, for all $a \in N$. 
Hence $y \in P $ and  $z \in (\pi_R(\CA_e( \CK))' \cap N ) \otimes P$. By Lemma~\ref{relative} for $\FH=\CK$, 
we have that $\pi_R(\CA_e( \CK))' \cap N = \C I $. So $z \in 1 \otimes P$ and it satisfies 
$z\gamma(a) = a z$, for all $a \in N$. In particular, if $g \in CK$
and $ a = \pi_R (c(g ) + c^*(g ))$ we have that $az = z\gamma(a) = -za = az$ since $N$ and $P$ commute. However $a$ is unitary, hence
$z = -z$, that is $z = 0$.  So we have that $ N^\prime \cap M_R = P$.
\end{proof}

\begin{Theorem}\label{thm:CAR}
 Let $R \in \CB(\CH)$ be an operator such that $0\leq R \leq 1$,  $S_t^*RS_t = R$ for all $t\geq 0$, and $R$ and $1-R$ are
invertible. Furthermore, suppose that  $R S_t\CH \subseteq S_t\CH$
 and $\text{Tr}(R|_{S_t \CH } - R|^2_{S_t \CH} ) = \infty$.
 Then the CAR flow $\alpha^R$ has the following properties:
 \begin{enumerate}
\item  it is equimodular with respect to $\omega_R$. 

\item  it is not modularly extendable.

\item the relative commutant index $(c_\alpha(t))_{t\geq 0}$ satisfies 
$1< c_\alpha(t) \leq 2$ for all $t\geq 0$.

\item if in addition $R$ is diagonalizable and $\frac{1}{2} \not\in \sigma(R)$, then $\ind_c(\alpha)=0$, in other words the coupling
index of $\alpha$ is zero.
 \end{enumerate}
\end{Theorem}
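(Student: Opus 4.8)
The plan is to verify the four assertions in order, reusing the structural lemmas established just above. For item~(1), I would note that this is essentially the content of Lemma~\ref{cond-exp-car} combined with Theorem~\ref{equi}: since $R S_t\CH \subseteq S_t\CH$ for all $t$, Lemma~\ref{cond-exp-car} (applied with $\CK = S_t\CH$) produces a normal $\omega_R$-preserving conditional expectation $E_t : M_R \to \pi_R(\CA(S_t\CH))'' = \alpha^R_t(M_R)$, and then Theorem~\ref{equi} gives that $\alpha^R_t$ is equimodular with respect to $\omega_R$; doing this for every $t\geq 0$ shows $\alpha^R$ is equimodular. For item~(3), I would first identify the relevant relative commutant. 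By Lemma~\ref{rel}, applied with $\CK = S_t\CH$ (using the hypothesis $\mathop{Tr}(R|_{S_t\CH} - R|^2_{S_t\CH}) = \infty$), we get
\[
M_R \cap \alpha^R_t(M_R)' = M_R \cap \pi_R(\CA(S_t\CH))' = \pi_R(\CA_e((S_t\CH)^\perp))''.
\]
Therefore $N_{\alpha^R}(t) = \pi_R(\CA_e((S_t\CH)^\perp))'' \vee \pi_R(\CA(S_t\CH))''$. Since $(S_t\CH)^\perp = L^2(0,t)\otimes\CK$ and $S_t\CH = L^2(t,\infty)\otimes\CK$ are complementary, the algebra generated by $\CA_e((S_t\CH)^\perp)$ and all of $\CA(S_t\CH)$ is the even-plus-part of $\CA(\CH)$ sitting inside $\CA(\CH)$; concretely $N_{\alpha^R}(t)$ is an index-$2$ subfactor of $M_R$, being the fixed-point algebra under a single period-two automorphism (grading on the first $t$-coordinate only). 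So $[M_R : N_{\alpha^R}(t)] \leq 2$, and it equals $2$ unless this grading automorphism is inner; the hypothesis $\mathop{Tr}(R|_{S_t\CH} - R|^2_{S_t\CH})=\infty$ together with Lemma~\ref{outer} rules out innerness, forcing $c_{\alpha^R}(t) > 1$. Combined with equimodularity (so $t\in\CI_{\alpha^R}$ by Lemma~\ref{C-cond} and the remark after it, once one checks $N_{\alpha^R}(t)$ is a factor), this gives $1 < c_{\alpha^R}(t) \leq 2$.

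For item~(2), modular non-extendability is then immediate from item~(3) and Remark~\ref{relative-index-extendable}: if $\alpha^R$ were modularly extendable then, since $\omega_R$ is a faithful normal \emph{state}, the converse direction of Theorem~\ref{E0-extcon} would force $N_{\alpha^R}(t) = M_R$, i.e.\ $c_{\alpha^R}(t) = 1$ for all $t$, contradicting $c_{\alpha^R}(t) > 1$. Alternatively one can cite Remark~\ref{stateext} directly together with the computation $(\alpha^R_t(M_R) \cup (M_R \cap \alpha^R_t(M_R)'))'' = N_{\alpha^R}(t) \neq M_R$.

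For item~(4), under the extra assumptions that $R$ is diagonalizable and $\tfrac12 \notin \sigma(R)$, the plan is to show the coupling superproduct system is trivial. By Remark~\ref{rem:canonical-unit}, equimodularity gives a canonical unit for $H_{\alpha^R}$, so $\CU(\alpha^R, (\alpha^R)') \neq \varnothing$ and $\ind_c$ is defined; and by Remark~\ref{compute-coupling-index} the coupling index of a \emph{non}-extendable E$_0$-semigroup is at least not forced to be the index of an extension, so one must compute $H_{\alpha^R}(t)$ directly. Using Proposition~\ref{prop:superprod-in-space} with $\Omega\otimes\Omega \in \CH_R$, it suffices to show that the closure of $H_{\alpha^R}(t)(\Omega\otimes\Omega)$ is one-dimensional, and since $H_{\alpha^R}(t)(\Omega\otimes\Omega)$ is contained in the closure of $(M_R \cap \alpha^R_t(M_R)')(\Omega\otimes\Omega) = \pi_R(\CA_e(L^2(0,t)\otimes\CK))''(\Omega\otimes\Omega)$ and also must be $(\alpha^R)'$-invariant, one analyzes which vectors of $\CF_-(L^2(0,t)\otimes\CK)\otimes\CF_-(\cdots)$ can lie in a common eigenvector line for the modular structure. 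This is where the hypothesis $\tfrac12\notin\sigma(R)$ enters: the modular operator of $\omega_R$ is $\Delta_{\omega_R} = \bigoplus (R(1-R)^{-1})^{\otimes}$-type, and $\tfrac12\notin\sigma(R)$ means no one-particle mode has modular eigenvalue $1$, so the only modular-invariant vectors in the relevant even subspace are the scalar multiples of $\Omega\otimes\Omega$; diagonalizability of $R$ lets one make this eigenmode analysis explicit coordinate by coordinate. Hence $H_{\alpha^R}(t)$ is spanned by the canonical unit $\CF_-(S_t)$ and $\ind_c(\alpha^R) = \dim\CH(\CU(\alpha^R,(\alpha^R)')) = 0$.

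The main obstacle I expect is item~(4): specifically, the careful identification of the modular-invariant part of the relative-commutant representation space and the verification that $\tfrac12\notin\sigma(R)$ exactly kills all non-scalar candidates. Items (1)--(3) are essentially bookkeeping on top of Lemmas~\ref{outer}, \ref{relative}, \ref{rel} and Theorem~\ref{equi}, but (4) requires an honest Fock-space computation of the KMS/modular data of $\omega_R$ restricted to the even algebra on a finite interval, and a nontrivial argument that a common unit vector of $\alpha^R$ and $(\alpha^R)'$ inside this space must be scalar. A secondary subtlety worth flagging is checking that $N_{\alpha^R}(t)$ is genuinely a factor (so that $t\in\CI_{\alpha^R}$ and the minimal index is well-defined), which should follow from Lemma~\ref{relative} applied to both $\CK = S_t\CH$ and its orthocomplement together with the tensor-splitting $N_{\alpha^R}(t)\cong \pi_R(\CA(S_t\CH))'' \bar\otimes \pi_R(\CA_e((S_t\CH)^\perp))''$ of a factor by a factor.
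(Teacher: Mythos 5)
Your treatment of items (1)--(3) is essentially sound. Items (1) and (2) follow the paper's route exactly (Lemma~\ref{cond-exp-car} plus Theorem~\ref{equi}, then Lemma~\ref{rel} plus the converse in Theorem~\ref{E0-extcon}/Remark~\ref{stateext}). For item (3) you take a genuinely different and arguably cleaner route: identifying $N_{\alpha}(t)$ as the fixed-point algebra of the period-two Bogoliubov automorphism that grades only the $(S_t\CH)^\perp$ modes, and invoking the standard fact that the fixed-point algebra of an outer $\Z/2\Z$-action on a factor has index $2$. The paper instead computes $\operatorname{Ind} E_t=2$ directly via the identity $u(f_0)p_\alpha(t)u(f_0)+p_\alpha(t)=1$ and Haagerup's operator-valued weight. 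Your route is fine, but note that Lemma~\ref{outer} as stated only rules out innerness of the \emph{full} grading on a CAR algebra with a quasi-free state; to conclude that your partial grading is outer on $M_R$ you still need the crossed-product bookkeeping that appears inside the proof of Lemma~\ref{rel} (where the relevant outer automorphism is $\mathrm{Ad}(c(f_0)+c(f_0)^*)$ restricted to $N\bar\otimes P$, dual to your grading).

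Item (4), which you correctly flag as the hard part, contains a genuine gap: your strategy rests on the claim that $H_{\alpha^R}(t)$ is one-dimensional, and that claim is false. Since $\alpha^R$ is equimodular, $H_{\alpha^R}(t)(\Omega\otimes\Omega)$ is the \emph{closure of} $\bigl(M_R\cap\alpha^R_t(M_R)'\bigr)(\Omega\otimes\Omega)=\pi_R(\CA_e(L^2(0,t)\otimes\CK))''(\Omega\otimes\Omega)$, which is infinite-dimensional; explicitly, by \cite[Theorem 3.21]{Bikram} (eq.~\eqref{eq-Halpha} in the paper) the fiber is the closed span of all $(f_1\wedge\cdots\wedge f_n)\otimes(g_1\wedge\cdots\wedge g_m)$ with $(-1)^n=(-1)^m$ and $f_i,g_j\in L^2(0,t)\otimes\CK$. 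This is where the one-dimensionality argument works for the $q$-CCR flows (there the relative commutant is trivial) but not here. Moreover, elements of $H_{\alpha^R}(t)$ are only required to intertwine $\alpha_t$ and $\alpha_t'$; there is no reason for $T(\Omega\otimes\Omega)$ to be invariant under the modular data, so the ``modular eigenmode'' reduction does not apply to the whole fiber. The actual task is to classify the \emph{units} (equivalently, the centered addits) of this infinite-dimensional superproduct system. The paper does this by decomposing a centered addit $b=\sum_{m,n}b^{m,n}$ along the particle-number grading, killing every component with $m+n\geq 2$ by writing $b^{m,n}_t=\sum_j S_{t_j}b^{m,n}_{t_{j+1}-t_j}$ over arbitrarily fine partitions (forcing the support onto a null set), and killing the one-particle components because the parity constraint $(-1)^n=(-1)^m$ excludes vectors of the form $f\otimes\Omega$ and $\Omega\otimes f$ from $H^{\rho}_{\alpha}(t)$; the hypotheses that $R$ is diagonalizable and $\tfrac12\notin\sigma(R)$ enter only through the applicability of \cite[Theorem 3.21]{Bikram}, not through a modular-invariance argument. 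Without some version of this addit analysis, your plan for (4) does not close.
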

\begin{proof} Let $R$ as in the statement of the theorem be fixed. We write $\alpha= \alpha^R$.

(1)   It is clear that $\alpha_t$ is $\omega_R$-preserving.
By Lemma~\ref{cond-exp-car} applied to the subspace $S_t\CH$, there exists a unique  normal $\omega_R$-preserving conditional
expectation from $M_R$ onto $\alpha_t(M_R)= \pi_R(\CA(S_t\CH))''$. Therefore, it follows from Theorem~\ref{equi} that $\alpha_t$ is equimodular with respect to $\omega_R$.

(2) By the previous item, the CAR flow $\alpha$ is equimodular. 
By Lemma~\ref{rel}, it follows that for $t >0$
\[
M_R \cap \alpha_t(M_R)^\prime = \pi_R (A_e( (S_t\CH)^\perp) )'',
\]
However, we have that $M_R \cap \alpha_t(M_R)^\prime$ 
and $\alpha_t(M_R)$ can not generate $M_R$ as von Neumann algebra, since
the subspace generated by the action of $M_R \cap \alpha_t(M_R)^\prime$ 
and $\alpha_t(M_R)$ on the vaccum $\Omega \otimes \Omega$ is orthogonal
to $\pi_R(c(f))\Omega \otimes \Omega$ for all $f \in (S_tH)^\perp$.
Hence by \cite[Corollary 3.7]{BISS}, the endomorphism $\alpha_t$ is not modularly
extendable.

(3)  Let $f_0 \in (S_t\CH)^\perp = L^2(0,t)\otimes \CK$, with $\|f_0\| = 1$ 
and consider 
  $u(f_0) = \pi_R(c(f_0)) + \pi_R(c(f_0)^*)$ which is a self adjoint unitary. Let $N_\alpha(t) = (\alpha_t(M_R)'\cap M_R) \vee \alpha_t(M_R)$
  and let $p_{\alpha}(t) \in B(\CH_R)$ be the orthogonal projection 
  onto the closure of the subspace 
  $N_{\alpha}(t) \Omega \otimes \Omega$. By Lemma~\ref{rel},
  we have that
  $N_{\alpha}(t) =\pi_R (\CA_e((S_t\CH)^\perp))'' \vee \alpha_t(M_R)$, 
  and by the first paragraph of the proof of Lemma~\ref{rel} we have that
  $N_\alpha(t)$ is a factor for every $t\geq 0$. Therefore, we have
  that the relative commutant index set is $\CI_\alpha = [0,\infty)$.
  By straightforward however elaborate computations involving the 
  explicit form of elememts in the range of $p_\alpha(t)$, one can
  check that
  \[
   u(f_0) p_{\alpha}(t) u(f_0) + p_{\alpha}(t) = 1.     
  \]
Since $\alpha$ is equimodular, by Lemma~\ref{C-cond} there exists a
 unique $\omega_R$-preserving normal conditional expectation $E_t$ of $M_R$ onto  $N_\alpha(t)$. Let $E_t^{-1}$ denote the associated operator-valued weight from $N_\alpha(t)'$ to $M_R'$, and 
 let $J$ be the modular conjugation operator of $M_R$ with respect to 
 $\Omega \otimes \Omega$. 
 Let $F_t$ be the operator-valued weight from $M_R \vee \{p_\alpha(t)\}''$ to
 $M_R$ formally defined by $F_t(\cdot) = J E_t^{-1}(J \cdot J ) J$.  Since $J p_{\alpha}(t) J =  p_{\alpha}(t)$ and 
 $E_t^{-1}( p_{\alpha}(t) ) = 1$, (see \cite{kosaki}), we have that  
 \begin{align*}
  E_t^{-1}( I ) = F_t(I) &=F_t (  u(f_0) p_{\alpha}(t) u(f_0) + p_{\alpha}(t) )\\ 
  &=  u(f_0) F_t ( p_{\alpha}(t) ) u(f_0) + F_t( p_{\alpha}(t))\\
  &=u(f_0)  u(f_0) + I = 2I. 
\end{align*}
Thus $\text{Ind }E_t = 2$. 
 As $N_{\alpha}(t) \neq M_R $, we have that $1 < [M_R,~ N_{\alpha}] \leq 2$, that is to say $1< c_\alpha(t) \leq 2$.
 
(4) With the additional assumptions on $R$, we find ourselves in
the framework of \cite[Section 3.1]{Bikram}. Namely, $R$ and $1-R$
are invertible, $R$ is diagonalizable, $\frac{1}{2} \not\in\sigma(R)$
and furthermore $R$ commutes with $S_t$ for all $t$. Indeed, since $S_tR\CH \subseteq S_tR$ we have  that $S_tS_t^* R = R S_tS_t^*$.   Moreover, since $S_t^* R S_t = R$ we have that 
\[ 
S_tR = S_tS_t^*RS_t = RS_tS_t^* S_t = RS_t.
\]

Let $H_\alpha =\{ (t, H_{\alpha}(t)),\, t \geq 0 \} $ be the 
coupling superproduct system associated to
for the CAR flow $\alpha$. By Proposition~\ref{prop:superprod-in-space},
the map $\rho: H_\alpha \to (0,\infty) \times \CF_-(\CH)\otimes \CF_-(\CH)$ given by $\rho(t, T)= T\Omega \otimes \Omega$ is injective,
fiberwise isometric and measurable, and hence the image can identified
as a superproduct system with $H_\alpha$, with product given by
$\rho((t, T) \cdot (s, S)) = (t+s, TS \Omega \otimes \Omega)$.  For simplicity, we will denote $H_\alpha^\rho(t)$ the $t$ fiber of $\rho(H_\alpha)$ inside the space $\CF_-(\CH)\otimes \CF_-(\CH)$.
By \cite[Theorem 3.21]{Bikram} we have that for $t >0 $,
\begin{align}\label{eq-Halpha}
 H_{\alpha}^\rho(t)
=\cspan \{ (f_1\wedge \cdots \wedge f_n ) \otimes (g_1 \wedge \cdots \wedge g_m) 
& : ~m,n \in \N, (-1)^{n} = (-1)^{m}  \\
& f_1, \cdots, f_n, g_1, \cdots, g_m \in L^2(0, t)\otimes\CK 
\}.
\end{align}

Since $\alpha$ is equimodular, its coupling superproduct system $H_\alpha$ has a canonical unit $U_t$ (see Remark~\ref{rem:canonical-unit}). It is straightforward to check
that $U_t = \Gamma(S_t) \otimes \Gamma(S_t)$ where $\Gamma(S_t)$
denotes the second quantization of $S_t$. Hence in $H_\alpha^\rho(t)$
we obtain the unit $U_t \Omega \otimes \Omega = \Omega \otimes \Omega$.

We employ the concepts and techniques surrounding addits of superproduct
systems as introduced in \cite[Section 4]{MS}. In our context, an addit of $H_\alpha^\rho$ is a one-parameter measurable  family $b_t \in H_\alpha^\rho(t)$ such that 
\[
b_s + U_s b_t = b_{s+t}
\]
and it is called a centered addit if $\langle b_t, \Omega \otimes\Omega \rangle = 0$
for all $t\geq 0$. In order to show that $\ind_c(\alpha)=0$, it suffices
to show that the only centered addit of $H_\alpha^\rho$ is the zero addit, which corresponds to the canonical unit by \cite[Theorem 5.11]{MS}. We will follow an approach similar
to \cite[Lemma 7.1]{MS}.

Let $b = \{b_t\}_{t \geq 0 }$ be a centered addit for $H_\alpha^\rho$. Since $\CF_-(\CH)\otimes \CF_-(\CH) = \sum_{m,n \geq 0} \CH^{\wedge^m} \otimes \CH^{\wedge^n}$  and we have a corresponding orthogonal decomposition
\[
b = \sum_{m,n \geq 0} b^{m,n}
\]
where $b^{m,n} \in \CH^{\wedge^m} \otimes \CH^{\wedge^n}$ for all 
$m, n$.  As $\CH^{\wedge^m} \otimes \CH^{\wedge^n}$ is invariant 
under $U_t$, we have that $b^{m,n}$ is an addit for every $m,n$.

 It is straightforward to check that there exists $\lambda \in \C$
 such that $b_t^{0,0}   = \lambda t(\Omega \otimes \Omega)$  for all $t$. Now notice that whenever $m+n \geq 2$, by eq. \eqref{eq-Halpha}
  we can identify $b_t^{m,n}$
 with a function with appropriate symmetries on the set $[0,t]^{m+n}$.
 And for any partition $\{0 = t_0 < t_1 < \dots < t_\ell = t \}$
 we can write
  \[
 b^{m,n}_t = \sum_{j=0}^{\ell-1} S_{t_j} b^{m,n}_{t_{j+1} - t_j}
 \]
 which corresponds to a function with support in $\prod_{j=0}^{\ell -1}
 [t_j, t_{j+1}]\times [t_j, t_{j+1}]$. Since the partition was
 arbitrary, we see that the support of $b_t^{m,n}$ has to be a null set, in other words $b_t^{m,n} = 0$  for all $t$ and $m+n \geq 2$.
Thus we have that
\[
 b= b^{0,0} + b^{1,0} + b^{0,1}, 
\]
where $ b^{1,0}_s  \in  \CF(L^2(0, s)\otimes\CK)\otimes \Omega$   and  
$b^{0,1}_s \in  \Omega \otimes \CF(L^2(0, s)\otimes\CK)$. But 
by eq. \eqref{eq-Halpha} we have that $H_\alpha^\rho(s)$ does not
contain vectors of the form $ f \otimes \Omega$ or $\Omega \otimes f$, for $0\neq f \in L^2(0, \infty)\otimes\CK)$. 
We conclude that $b_t = b^{0,0}_t =\lambda t(\Omega \otimes \Omega)$,
and since $b_t$ is centered, we have that $\lambda = 0$ and $b_t = 0$
for all $t$. Hence $\ind_c(\alpha)=0$.
\end{proof}

\begin{Remark}\label{type}
We note that if $T \in \CB(\CK)$ is an operator such that 
$0 \leq T \leq 1$ with $T$ and $1-T$ invertible, then the operator
$R = 1 \otimes T$ on $\CH = L^2(0,\infty)\otimes \CK$ 
satisfies all the conditions of Theorem~\ref{thm:CAR}. Furthermore,
by varying $T$, the resulting factor $M_R$ may be chosen be the hyperfinite factors of type II$_1$, II$_\infty$ or III$_\lambda$ for $\lambda \in (0,1)$ by \cite[Lemma 5.3]{PoSt}. In particular, we have examples of non-modularly extendable E$_0$-semigroups on those 
factors. For the case of II$_\infty$ hyperfinite factors, this result is new.
\end{Remark}

 \begin{Corollary}\label{c-2}
 Let $\CK$ be a Hilbert space of any dimension and let $R \in \CB(\CH)$ be an operator satisfying the following properties:
  $0\leq R \leq 1$, $R$ and $1-R$ are invertible, 
   $\Tr(R|_{S_t\CH} - R|^2_{S_t\CH} ) = \infty$ and moreover
  $S_t^*RS_t = R$ and $R S_t \CH \subseteq \CH$ for all $t\geq 0$.
  Let $\alpha$ be the corresponding CAR flow on $M_R$.  
  Then for $k \neq \ell \in \N$,  we have that  ${\alpha}^{\otimes k}$ and $ {\alpha}^{\otimes \ell}$  are not cocycle conjugate when considered as E$_0$-semigroups on $M_R$. 
 \end{Corollary}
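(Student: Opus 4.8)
The plan is to distinguish $\alpha^{\otimes k}$ and $\alpha^{\otimes\ell}$ by their relative commutant indices, using the multiplicativity of this invariant under tensor products together with the bound $c_\alpha(t)>1$ furnished by Theorem~\ref{thm:CAR}.

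First I would record what Theorem~\ref{thm:CAR} gives for $\alpha=\alpha^R$ under the stated hypotheses on $R$: $\alpha$ is equimodular with respect to $\omega_R$, one has $\CI_\alpha=[0,\infty)$, and $1<c_\alpha(t)\leq 2$ for every $t\geq 0$. Next, by induction on $k$ I would show that $\CI_{\alpha^{\otimes k}}=[0,\infty)$ and $c_{\alpha^{\otimes k}}(t)=c_\alpha(t)^k$ for all $t\geq 0$. Indeed, the computation in the proof of Proposition~\ref{prop:relative-commutant-index}(ii) yields $N_{\alpha^{\otimes k}}(t)=N_{\alpha^{\otimes(k-1)}}(t)\otimes N_\alpha(t)$; when both tensor factors are subfactors admitting faithful normal conditional expectations, so is their tensor product (with the tensor product of the expectations), whence $[0,\infty)=\CI_\alpha\cap\CI_{\alpha^{\otimes(k-1)}}\subseteq\CI_{\alpha^{\otimes k}}$, and Proposition~\ref{prop:relative-commutant-index}(ii) gives $c_{\alpha^{\otimes k}}(t)=c_{\alpha^{\otimes(k-1)}}(t)\,c_\alpha(t)=c_\alpha(t)^k$.

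Now fix any $t>0$. Since $c_\alpha(t)>1$, the sequence $(c_\alpha(t)^k)_{k\in\N}$ is strictly increasing, so for $k\neq\ell$ we get $c_{\alpha^{\otimes k}}(t)=c_\alpha(t)^k\neq c_\alpha(t)^\ell=c_{\alpha^{\otimes\ell}}(t)$; hence the relative commutant indices of $\alpha^{\otimes k}$ and $\alpha^{\otimes\ell}$ are different. The relative commutant index is defined intrinsically from the pair (factor, E$_0$-semigroup) via the minimal Kosaki index, so it is unchanged when an E$_0$-semigroup is carried along a $*$-isomorphism of the ambient factor; in particular, for whatever identification $M_R^{\otimes k}\cong M_R$ is used to regard $\alpha^{\otimes k}$ as an E$_0$-semigroup on $M_R$, its relative commutant index is still $(c_\alpha(t)^k)_{t\geq 0}$. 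By Proposition~\ref{prop:relative-commutant-index}(i) the relative commutant index is a cocycle conjugacy invariant, so if $\alpha^{\otimes k}$ and $\alpha^{\otimes\ell}$ (on $M_R$) were cocycle conjugate we would have $c_\alpha(t)^k=c_\alpha(t)^\ell$ for all $t$, contradicting the previous line. Therefore they are not cocycle conjugate whenever $k\neq\ell$.

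The argument is routine given the machinery already in place; the only points needing a little care are verifying that each tensor power $\alpha^{\otimes k}$ still has $\CI_{\alpha^{\otimes k}}=[0,\infty)$ (so that the relative commutant index is defined and Proposition~\ref{prop:relative-commutant-index}(ii) applies inductively), and noticing that it is the strict inequality $c_\alpha(t)>1$ from Theorem~\ref{thm:CAR}(3) — not merely the upper bound $c_\alpha(t)\leq 2$ — that is responsible for separating the distinct powers.
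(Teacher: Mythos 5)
Your proposal is correct and follows essentially the same route as the paper: identify $M_R^{\otimes k}\cong M_R$ (both hyperfinite with the same invariants), then separate the tensor powers via the relative commutant index, using its multiplicativity from Proposition~\ref{prop:relative-commutant-index}(ii), its cocycle conjugacy invariance from part (i), and the strict bound $1<c_\alpha(t)\leq 2$ from Theorem~\ref{thm:CAR}. Your explicit verification that $\CI_{\alpha^{\otimes k}}=[0,\infty)$ at each inductive step is a detail the paper leaves implicit, but it is the same argument.
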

 \begin{proof}
 It is straightforward to check that $M_R \bar{\otimes} M_R \cong M_R$
 since both are hyperfinite factors and have the same Connes invariants. Thus,
 for every $k \neq  \ell \in \N$, we may consider $\alpha^{\otimes^k}$
 and $\alpha^{\otimes^\ell}$ as E$_0$-semigroups on the \emph{same} algebra $M_R$. The result now follows because by 
 Theorem~\ref{thm:CAR} and Proposition~\ref{prop:relative-commutant-index}
  both E$_0$-semigroups have different relative commutant index families.
\end{proof}
 
 \begin{Remark}
 It follows from Corollary~\ref{c-2} and Remark~\ref{type} that by varying $R\in \CB(\CH)$
 we obtain on every hyperfinite factor of types II$_1$, II$_\infty$
 and III$_\lambda$ for $\lambda \in (0,1)$ a countably infinite family
 of E$_0$-semigroups which are not modularly extendable and pairwise non-cocycle
 conjugate.
 \end{Remark}

\section*{Acknowledgments}
We would like to thank Prof. R. Srinivasan for his suggestions and comments.

\providecommand{\bysame}{\leavevmode\hbox to3em{\hrulefill}\thinspace}
\providecommand{\MR}{\relax\ifhmode\unskip\space\fi MR }
% \MRhref is called by the amsart/book/proc definition of \MR.
\providecommand{\MRhref}[2]{%
  \href{http://www.ams.org/mathscinet-getitem?mr=#1}{#2}
}
\providecommand{\href}[2]{#2}

\end{document}